  \def\@wrindex#1{%
    \protected@write\@indexfile{}%
      {\string\indexentry{#1}{ \S\thesubsection (p.\thepage)}}
    \endgroup
  \@esphack}
\newcommand{\DR}{\mathrm{DR}}
\newcommand{\B}{\mathrm{B}}
\author{Benjamin Enriquez}
\author{Hidekazu Furusho}
\address{Institut de Recherche Math\'{e}matique Avanc\'{e}e, UMR 7501, 
Universit\'{e} de Strasbourg et CNRS, 7 rue Ren\'{e} Descartes, 67000 Strasbourg, France}
\email{enriquez@math.unistra.fr}
\address{Graduate School of Mathematics, Nagoya University, 
Furo-cho, Chikusa-ku, Nagoya, 464-8602, Japan}
\email{furusho@math.nagoya-u.ac.jp}
\date{March 2, 2022}
\newtheorem{thm}{Theorem}[section]
\newtheorem{lem}[thm]{Lemma}
\newtheorem{lemdef}[thm]{Lemma-Definition}
\newtheorem{cor}[thm]{Corollary}
\newtheorem{prop}[thm]{Proposition}
\theoremstyle{definition} \newtheorem{rem}[thm]{Remark}}
\theoremstyle{definition} \newtheorem{defn}[thm]{Definition}}
\theoremstyle{remark} }
\numberwithin{equation}{subsection}
\numberwithin{figure}{section}
\begin{document}

\baselineskip 16pt 
%{\tiny{RECH/projet/InterprDelta*/Survey.pdf}}\hfill
%\qquad{\tiny{\today}}

\title[Equality of stabilizer bitorsors]{The stabilizer bitorsors of the module and algebra harmonic coproducts are equal
%Equality $\mathfrak{stab}(\hat\Delta^{\mathcal{W}})=\mathfrak{stab}(\hat\Delta^{\mathcal{M}})$
}

\begin{abstract}

In earlier work, we constructed a pair of "Betti" and "de Rham" Hopf algebras and a pair of module-coalgebras over this pair, as well as the 
bitorsors related to both structures (which will be called the "module" and "algebra" stabilizer bitorsors). We showed that Racinet's torsor 
constructed out of the double shuffle and regularization relations between multiple zeta values is essentially equal to the "module" stabilizer 
bitorsor, and that the latter is contained in the "algebra" stabilizer bitorsor. In this paper, we show the equality of the "algebra" and 
"module" stabilizer bitorsors.  We reduce the proof to showing the equality of the associated "algebra" and "module" graded Lie algebras. 
The argument for showing this equality involves the relation of the "algebra" Lie algebra with the kernel of a linear map, the expression of 
this linear map as a composition of three linear maps, the relation of one of them with the "module" Lie algebra and the computation of the 
kernel of the other one by discrete topology arguments.
%The argument for showing this equality involves the relation of the "algebra" Lie algebra with the kernel of a linear map, the decomposition of %the latter map, and the relation of one of its constituents with the "module" Lie algebra. 
\end{abstract}

\bibliographystyle{amsalpha+}
\maketitle
\setcounter{tocdepth}{2}
{\footnotesize \tableofcontents}

\section{Introduction}

The multiple zeta values (MZVs) are the real numbers defined by 
$$
\zeta(n_1,\ldots,n_s):=
\sum_{k_1>\cdots>k_s>0}1/(k_1^{n_1}\cdots k_s^{n_s})
$$
for $s\geq1$ and $n_1\geq 2$, $n_2,\ldots,n_s\geq1$ (\cite{Zagier}). Two sets of algebraic relations between these numbers are particularly known : 
the double shuffle and regularization relations (\cite{R,IKZ}, see also \cite{Ec} where these relations are given without proof) 
and the associator relations which follow from a combination of \cite{Dr} and \cite{LM}. Both sets of relations give rise to 
$\mathbb Q$-schemes denoted $\mathsf{DMR}$ and $\mathsf{M}$, both 
of which are equipped with torsor structures. These sets of relations are conjectured to be equivalent in Racinet's PhD thesis, 
this being equivalent to the equality of schemes $\mathsf{M}=\mathsf{DMR}$. The unfinished preprint \cite{DT} presents ideas on the geometry 
of moduli spaces $\mathfrak M_{0,4}$ and $\mathfrak M_{0,5}$ as well as how to apply them to a proof of the inclusion 
$\mathsf{M}\subset\mathsf{DMR}$. A proof of this inclusion is given in [F] based on the study of Chen's bar calculus for the 
moduli space $\mathfrak M_{0,5}$.

In the series of papers \cite{EF0,EF1,EF2,EF3}, we give a proof of the inclusion $\mathsf{M}\subset\mathsf{DMR}$ based on the ideas of \cite{DT} 
and make explicit the bitorsor aspects of this torsor inclusion. To this end, we attach to each $\mathbb Q$-algebra $\mathbf k$  
and $\omega\in \{\B,\DR\}$ a Hopf algebra $(\hat{\mathcal W}^\omega,
\hat\Delta^{\mathcal W,\omega})$ and a module-coalgebra 
$(\hat{\mathcal M}^\omega,\hat\Delta^{\mathcal M,\omega})$ over it, by which one understand that the action map 
$\hat{\mathcal W}^\omega\otimes\hat{\mathcal M}^\omega\to\hat{\mathcal M}^\omega$ is compatible with the coproducts (see \cite{EF3}, \S3.9). 
Denote by $\mathbf k$-alg (resp. $\mathbf k$-Hopf, $\mathbf k$-alg-mod,  $\mathbf k$-HAMC), the category of $\mathbf k$-algebras 
(resp. $\mathbf k$-Hopf algebras, pairs $(A,M)$ of a $\mathbf k$-algebra $A$ and a $\mathbf k$-module $M$ over $A$, pairs 
$((A,\Delta_A),(M,\Delta_M))$ of a $\mathbf k$-Hopf algebra $(A,\Delta_A)$ and a $\mathbf k$-module coalgebra $(M,\Delta_M)$ over it).
For $\omega\in\{\B,\DR\}$, set $(\hat{\mathcal W},\hat\Delta^{\mathcal W})^\omega:=
(\hat{\mathcal W}^\omega,\hat\Delta^{\mathcal W,\omega})$ and 
$((\hat{\mathcal W},\hat\Delta^{\mathcal W}),(\hat{\mathcal M},\hat\Delta^{\mathcal M}))^\omega:=
((\hat{\mathcal W}^\omega,\hat\Delta^{\mathcal W,\omega}),(\hat{\mathcal M}^\omega,\hat\Delta^{\mathcal M,\omega}))$. 
One constructs a diagram of bitorsors 
$$
\xymatrix{\mathsf G^{\DR,\B}(\mathbf k)\ar[r] & \mathrm{Iso}_{\mathbf k\operatorname{-alg-mod}}((\hat{\mathcal W},\hat{\mathcal 
M})^{\DR/\B})\ar[r]& 
\mathrm{Iso}_{\mathbf k\operatorname{-alg}}(\hat{\mathcal W}^{\DR/\B})
\\ \mathsf M(\mathbf k) \ar@{^{(}->}[u]
\ar^{\!\!\!\!\!\!\!\!\!\!\!\!\!\!\!\!\!\!\!\!\!\!\!\!\!\!\!\!\!\!\!\!\!\!\!\!\!\!\!\!
\!\!\!\!\!\!\!\!\!\!\!(*)}[r]& 
\mathrm{Iso}_{\mathbf k\operatorname{-HAMC}}(
((\hat{\mathcal W},\hat\Delta^{\mathcal W}),(\hat{\mathcal M},\hat\Delta^{\mathcal M}))^{\DR/\B}
) \ar@{^{(}->}[u]\ar[r]& 
\mathrm{Iso}_{\mathbf k\operatorname{-Hopf}}(
(\hat{\mathcal W},\hat\Delta^{\mathcal W})^{\DR/\B}
) \ar@{^{(}->}[u]}
$$
where $\mathsf G^{\DR,\B}(\mathbf k)$ is the ambient bitorsor of the bitorsor of associators (see \cite{Dr}, \S5 and \cite{EF3}, \S\S2.3 and 
3.6) and where for each category $\mathcal C$ and map $\{\B,\DR\}\to\mathrm{Ob}(\mathcal C)$ denoted $\omega\mapsto X^\omega$, we denote by 
$\mathrm{Iso}_{\mathcal C}(X^{\DR/\B})$ the bitorsor $\mathrm{Iso}_{\mathcal C}(X^\B,X^\DR)$; the construction of the morphism $(*)$ relies 
the geometric interpretations of $(\hat{\mathcal W}^\omega,\hat\Delta^{\mathcal W,\omega})$ and 
$(\hat{\mathcal M}^\omega,\hat\Delta^{\mathcal M,\omega})$, which are based on the ideas of \cite{DT}. The fibered product bitorsors 
$$
\mathsf G^{\DR,\B}(\mathbf k)\times_{\mathrm{Iso}_{\mathbf k\operatorname{-alg}}(\mathcal W^{\DR/\B})}
\mathrm{Iso}_{\mathbf k\operatorname{-Hopf}}((\hat{\mathcal W},\hat\Delta^{\mathcal W})^{\DR/\B})
$$ 
and 
$$
\mathsf G^{\DR,\B}(\mathbf k)\times_{
\mathrm{Iso}_{\mathbf k\operatorname{-alg-mod}}((\hat{\mathcal W},\hat{\mathcal 
M})^{\DR/\B})}
\mathrm{Iso}_{\mathbf k\operatorname{-HAMC}}(
((\hat{\mathcal W},\hat\Delta^{\mathcal W}),(\hat{\mathcal M},\hat\Delta^{\mathcal M}))^{\DR/\B}
)
$$
are shown to be equal to bitorsors denoted respectively
$\mathsf{Stab}(\hat\Delta^{\mathcal W,\DR/\B})(\mathbf k)$ and $\mathsf{Stab}(\hat\Delta^{\mathcal M,\DR/\B})(\mathbf k)$
(see \cite{EF3}, \S3.9). The vertical maps being injective, the above diagram leads to a sequence of inclusions of bitorsors
$$
\mathsf M(\mathbf k)\hookrightarrow \mathsf{Stab}(\hat\Delta^{\mathcal M,\DR/\B})(\mathbf k)
\hookrightarrow \mathsf{Stab}(\hat\Delta^{\mathcal W,\DR/\B})(\mathbf k)
\hookrightarrow\mathsf G^{\DR,\B}(\mathbf k). 
$$
The inclusion $\mathsf M(\mathbf k)\subset\mathsf{DMR}(\mathbf k)$ then follows from the identification 
$\mathsf{DMR}(\mathbf k)=\mathsf G_{\mathrm{quad}}^{\DR,\B}(\mathbf k)\cap \mathsf{Stab}(\hat\Delta^{\mathcal M,\DR/\B})(\mathbf k)$
(based on \cite{EF0}) and the easy inclusion $\mathsf M(\mathbf k)\subset\mathsf G_{\mathrm{quad}}^{\DR,\B}(\mathbf k)$, where 
$\mathsf G_{\mathrm{quad}}^{\DR,\B}(\mathbf k)\subset \mathsf G^{\DR,\B}(\mathbf k)$ is a subbitorsor defined by quadratic conditions
(see \cite{EF3}, \S3.1). 

The main result of the present paper is: 

\begin{thm} (see Theorem \ref{thm:24122021})
The inclusion $\mathsf{Stab}(\hat\Delta^{\mathcal M,\DR/\B})(\mathbf k)
\hookrightarrow \mathsf{Stab}(\hat\Delta^{\mathcal W,\DR/\B})(\mathbf k)$ is an equality of bitorsors. 
\end{thm}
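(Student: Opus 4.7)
The strategy is to reduce the stated equality of bitorsors to an equality of their associated graded Lie algebras, and then prove the Lie-algebra equality by a kernel computation organized around a threefold factorization of a linear map.

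\emph{Reduction to Lie algebras.} Both stabilizer bitorsors are pro-algebraic and sit inside the common ambient bitorsor $\mathsf G^{\DR,\B}(\mathbf k)$, with the inclusion already known to be a morphism of bitorsors. Passing to the natural filtrations and their associated graded pieces, one obtains graded Lie algebras $\mathfrak{stab}(\hat\Delta^{\mathcal M}) \subset \mathfrak{stab}(\hat\Delta^{\mathcal W})$ inside a common ambient graded Lie algebra, and the equality of bitorsors follows from the equality of these Lie algebras by the standard rigidity of pro-unipotent torsor inclusions (checking equality degree by degree on the tangent level). So the problem is reduced to proving $\mathfrak{stab}(\hat\Delta^{\mathcal M}) = \mathfrak{stab}(\hat\Delta^{\mathcal W})$.

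\emph{Linearization and factorization.} I would encode the stabilizer condition for $\hat\Delta^{\mathcal W}$ as the kernel of a linear map $f:V\to V'$ between graded vector spaces, obtained by linearizing the coproduct-equivariance equation for $\hat\Delta^{\mathcal W}$. The central device is then a factorization $f=f_3\circ f_2\circ f_1$ chosen so that the kernel of one factor — say $f_1$ — is exactly the stabilizer condition for $\hat\Delta^{\mathcal M}$, i.e.\ $\ker(f_1)=\mathfrak{stab}(\hat\Delta^{\mathcal M})$. The inclusion $\ker(f_1)\subset\ker(f)$ then recovers the already-known inclusion, and the reverse inclusion is equivalent to
$$
\mathrm{Image}(f_1)\cap\ker(f_3\circ f_2)=0.
$$

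\emph{Kernel computation and main obstacle.} To establish this vanishing I would attack $\ker(f_2)$ (or $\ker(f_3)$) directly and then check that its pull-back along the remaining factor meets $\mathrm{Image}(f_1)$ trivially. This is where I expect the main difficulty: the spaces involved are naturally completed, so no purely algebraic description of the kernel is available. Following the abstract's hint, I would equip each graded piece with its discrete topology, single out a dense subspace on which the relevant structure is controlled by a finite combinatorial model, compute the kernel there by hand, and then upgrade the result by a continuity argument. The main conceptual obstacle is to choose the middle space and the factorization so that two of the three factors admit clean algebraic interpretations (one via $\mathfrak{stab}(\hat\Delta^{\mathcal M})$, the other via the known structure of $\hat{\mathcal W}$ and $\hat{\mathcal M}$) while the third is amenable to the finite/combinatorial discrete-topology argument; once this factorization is found, the rest of the proof should be essentially bookkeeping.
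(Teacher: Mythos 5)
Your skeleton (reduce to graded Lie algebras, factor the linearized stabilizer condition as a composition of three linear maps, compute one kernel by a discrete-topology argument, relate another factor to $\Delta^{\mathcal M}$) is indeed the paper's, but the logical architecture you impose on the factorization does not match the one that exists, and two of your pivotal steps would fail. You posit $f=f_3\circ f_2\circ f_1$ with $\ker(f_1)=\mathfrak{stab}(\hat\Delta^{\mathcal M})$, reducing everything to $\mathrm{Im}(f_1)\cap\ker(f_3\circ f_2)=0$. In the actual factorization $-\cdot\Delta^{\mathcal W}=\mathbf i\circ\mathbf h\circ\mathbf H$, the first factor $\mathbf H$ admits no such kernel description: its link to the module coproduct is a commutative square (Proposition \ref{comm:square}) identifying $(-\cdot 1_{\mathcal M})^{\otimes2}\circ(p_0\oplus0\oplus0)\circ\mathbf H$ with $(\Delta^{\mathcal M}-\mathrm{id}\otimes1_{\mathcal M}-1_{\mathcal M}\otimes\mathrm{id})\circ(-\cdot1_{\mathcal M})$. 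Moreover the middle kernel is neither zero nor required to meet $\mathrm{Im}(\mathbf H)$ trivially: $\ker(\mathbf h)=\mathrm{Im}(\mathbf j)$ is a nontrivial space of ``constants'' (Proposition \ref{descr:map'}), and an element $a$ with $\mathbf H(a)\in\ker(\mathbf h)$ is only shown to satisfy $(\Delta^{\mathcal M}-\mathrm{id}\otimes1_{\mathcal M}-1_{\mathcal M}\otimes\mathrm{id})(a\cdot1_{\mathcal M})\in\mathbb Q\,1_{\mathcal M}^{\otimes2}$; a positive-degree grading argument then forces this to vanish, giving $a\cdot1_{\mathcal M}\in\mathcal P(\mathcal M)$ (Proposition \ref{thm:aux}). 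Your vanishing condition is therefore the wrong target, and false for the factorization that is actually available.

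You are also missing two essential inputs that the plan as written cannot produce. The passage from ``$\theta(x)\cdot 1_{\mathcal M}\in\mathcal P(\mathcal M)$'' back to ``$x\in\mathfrak{stab}_{\mathfrak{lie}(e_0,e_1)}(\Delta^{\mathcal M})$'' is not formal: it rests on Theorem 3.10 of \cite{EF0}, and it holds only away from degree $2$ (Proposition \ref{prop:37:2302}), so one must separately kill the degree-$2$ component of $\mathfrak{stab}_{\mathfrak{lie}(e_0,e_1)}(\Delta^{\mathcal W})$ by an explicit computation of $\theta([e_0,e_1])\cdot\Delta^{\mathcal W}$ (Lemma \ref{lemma:van:deg:2:component}). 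Neither step appears in your outline. Finally, your picture of the discrete-topology step as a density/continuity upgrade on completed spaces is off: the spaces $(\oplus_{k\geq0}\mathcal W^{\otimes2})^{\oplus2}\oplus(\oplus_{i\geq0}\mathcal W)$ and $\mathrm{Map}(\mathbb Z_{>0},\mathcal W^{\otimes2})$ are plain direct sums and mapping spaces, and ``convergence in the discrete topology'' just means that the sequences $n\mapsto(\partial_n\otimes\partial_{n+k})\bigl(\mathbf h(\underline a,\underline b,\underline z)(2n)\bigr)$ are eventually constant, which lets one read off the components $a_k,b_k,z_i$ of any element of $\ker(\mathbf h)$ for large $n$. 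The kernel computation is an elementary coefficient-extraction argument, not an approximation argument, so looking for a dense subspace with a finite combinatorial model would send you in the wrong direction.
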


Here is an outline of the proof. One reduces the proof of the equality of these bitorsors to that of the underlying $\mathbb Q$-group 
schemes $\mathsf{Stab}(\hat\Delta^{\mathcal M,\DR})(\mathbf k)$ and $\mathsf{Stab}(\hat\Delta^{\mathcal W,\DR})(\mathbf k)$, and then to that of 
their Lie algebra $\mathfrak{stab}(\hat\Delta^{\mathcal M,\DR})$ and $\mathfrak{stab}(\hat\Delta^{\mathcal W,\DR})$ (see proof of Theorem 
\ref{thm:24122021}). Both Lie algebras are degree completions of semidirect products by a one-dimensional Lie algebra of graded Lie algebras
$\mathfrak{stab}_{\mathfrak{lie}(e_0,e_1)}(\Delta^{\mathcal M})$ and $\mathfrak{stab}_{\mathfrak{lie}(e_0,e_1)}(\Delta^{\mathcal W})$, 
which are stabilizer Lie algebras of linear maps $\Delta^{\mathcal M}$ and $\Delta^{\mathcal W}$ (see see \S\S\ref{sect:2:2:2701}, 
\ref{sect:3:2:2701} and Propositions \ref{prop:main:sect:2}, \ref{lem:015:1711}). This reduces the proof to that of the equality of these graded 
Lie algebras (see proof of Theorem \ref{thm:main}).  

This equality is proved as follows. One first expresses $\mathfrak{stab}_{\mathfrak{lie}(e_0,e_1)}(\Delta^{\mathcal W})$ as the preimage by a 
 Lie algebra morphism $\theta : \mathfrak{lie}(e_0,e_1)\to\mathcal V_0$ of the kernel of a linear map 
$-\cdot\Delta^{\mathcal W} : \mathcal V_0\to\mathrm{Der}_{\Delta^{\mathcal W}}(\mathcal W,\mathcal W^{\otimes2})$ (Lemma \ref{lem:inclusion:3110}). 
One then decomposes $-\cdot\Delta^{\mathcal W}$ as a composition $\mathbf i\circ\mathbf h\circ\mathbf H$, where $\mathbf i$, $\mathbf h$ and 
$\mathbf H$ are linear maps (\S\ref{sect:44:2501}). One shows that $\mathbf i$ is injective (\S\ref{subsect:43:2501}), computes the kernel of 
$\mathbf h$ by discrete topology methods (\S\ref{subsub:comp:ker:h}), and relates $\mathbf H$ with $\Delta^{\mathcal M}$ 
(\S\ref{subsub:comm:square}). This leads to a proof of the inclusion of the kernel of $-\cdot\Delta^{\mathcal W}$ in a space 
$(-\cdot 1_{\mathcal M})^{-1}(\mathcal P(\mathcal M))$ defined in terms of $\Delta^{\mathcal M}$ (Proposition \ref{thm:aux}). Using the 
main result of \cite{EF0}, one relates the preimage by $\theta$ of  $(-\cdot 1_{\mathcal M})^{-1}(\mathcal P(\mathcal M))$ with 
$\mathfrak{stab}_{\mathfrak{lie}(e_0,e_1)}(\Delta^{\mathcal M})$ (Proposition \ref{prop:37:2302}), which combined with the other results 
leads to the announced statement (Theorem \ref{thm:main}).  

\medskip 
{\bf Notation.} For $A$ a $\mathbb Q$-algebra, we denote by $\mathrm{Der}(A)$ the Lie algebra of its derivations. 
If $V,W$ are $\mathbb Q$-vector spaces, we denote by $\mathrm{Hom}_{\mathbb Q\operatorname{-vec}}(V,W)$ the $\mathbb Q$-vector 
space of linear maps from $V$ to $W$. We set $\mathrm{End}_{\mathbb Q\operatorname{-vec}}(V):=\mathrm{Hom}_{\mathbb Q\operatorname{-vec}}(V,V)$. 

If $V$ is a $\mathbb Z$-graded vector space and $d\in\mathbb Z$, we denote by $V[d]$ the degree $d$ component of $V$, so $V=\oplus_{d\in\mathbb 
Z}V[d]$. For $v\in V$, we denote by $v[d]$ the component of $v$ of degree $d$, so $v[d]\in V[d]$ and $v=\sum_{d\in \mathbb Z}v[d]$.

\section{The stabilizer Lie algebra $\mathfrak{stab}(\hat\Delta^{\mathcal W,\DR})$}

This section deals with the Lie algebra $\mathfrak{stab}(\hat\Delta^{\mathcal W,\DR})$ from \cite{EF2}. More precisely, in 
\S\ref{sect:def:der:V:1}, we introduce Lie algebras $(\mathfrak{lie}(e_0,e_1),\langle,\rangle)$ and $(\mathcal V_0,\langle,\rangle)$ 
and a Lie algebra morphism $\theta : \mathfrak{lie}(e_0,e_1)\to\mathcal V_0$. In \S\ref{sect:2:2:2701}, we recall from \cite{EF1} the 
algebra $\mathcal W^\DR$ (here denoted $\mathcal W$) and the action of $\mathcal V_0$ on the vector space 
$\mathrm{Hom}_{\mathbb Q\operatorname{-vec}}(\mathcal W,\mathcal W^{\otimes2})$ ; this leads to a stabilizer Lie subalgebra 
$\mathfrak{stab}_{\mathcal V_0}(h)$ of $\mathcal V_0$ for any $h$ in this vector space. In \S\ref{sect:2:3:2701}, we show that if
$h\in\mathrm{Hom}_{\mathbb Q\operatorname{-alg}}(\mathcal W,\mathcal W^{\otimes2})$, then $\mathfrak{stab}_{\mathcal V_0}(h)$  
may be identified with the kernel of a linear map $\mathcal V_0\to\mathrm{Der}_h(\mathcal W,\mathcal W^{\otimes2})$ where the target is the 
set of $h$-derivations of the algebra $\mathcal W$ with values in $\mathcal W^{\otimes2}$, viewed as a $\mathcal W$-bimodule via $h$. 
In \S\ref{sect:def:tilde:yn}, we show that when $h$ is equal to the algebra harmonic coproduct 
$\Delta^{\mathcal W,\DR}$ from \cite{EF1} (here denoted $\Delta^{\mathcal W}$), then the preimage under $\theta$ of 
$\mathfrak{stab}_{\mathcal V_0}(h)$ is a graded Lie subalgebra of $(\mathfrak{lie}(e_0,e_1),\langle,\rangle)$, 
%which we denote $\mathfrak{stab}_{\mathfrak{lie}(e_0,e_1)}(\Delta^{\mathcal W})$, 
which after undergoing degree completion and semidirect product with the grading action of $\mathbb Q1$, gives the Lie algebra 
$\mathfrak{stab}(\hat\Delta^{\mathcal W,\DR})$ from 
\cite{EF2} (see Proposition \ref{prop:main:sect:2}).  

\subsection{The ambient Lie algebras $(\mathfrak{lie}(e_0,e_1),\langle,\rangle)$ and $(\mathcal V_0,\langle,\rangle)$}
\label{sect:def:der:V:1}

Denote by $\mathcal V$ the free associative $\mathbb Q$-algebra with generators $e_0,e_1$. 
Let $\mathfrak{lie}(e_0,e_1)\subset \mathcal V$ be the Lie subalgebra generated by $e_0,e_1$
(these objects are respectively denoted $\mathcal V^\DR$ and $\mathfrak f_2$ in \cite{EF1}, \S1.1
when $\mathbf k=\mathbb Q$). Then $\mathcal V$ and $\mathfrak{lie}(e_0,e_1)$ are equipped with compatible 
associative and Lie algebra $\mathbb Z_{\geq0}$-gradings, where $e_0$ and $e_1$ have degree 1. Let $\mathcal V_0$
be the direct sum of components of $\mathcal V$ of positive degree.

For $v\in\mathcal V_0$, let $\mathrm{der}_v^{\mathcal V,(1)}$ be the algebra derivation of $\mathcal V$ such that 
$$
\mathrm{der}^{\mathcal V,(1)}_v : e_0\mapsto [v,e_0], e_1\mapsto 0.
$$
For $v,v'\in\mathcal V_0$, set 
$$
\langle v,v'\rangle:=\mathrm{der}^{\mathcal V,(1)}_v(v')-\mathrm{der}^{\mathcal V,(1)}_{v'}(v)+[v',v].
$$

\begin{lem}\label{lem:2810:a}
(a) $(\mathcal V_0,\langle,\rangle)$ is a $\mathbb Z_{\geq0}$-graded Lie algebra, of which $\mathfrak{lie}(e_0,e_1)$ is a graded Lie subalgebra. 

(b) The map $\mathrm{der}^{\mathcal V,(1)}_- : (\mathcal V_0,\langle,\rangle)\to\mathrm{Der}(\mathcal V)$ is a Lie algebra morphism. 
\end{lem}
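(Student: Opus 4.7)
The plan is to dispose of the easy aspects of (a), namely the grading and antisymmetry, by direct inspection of the defining formula, then to establish (b) by a calculation on the generators of $\mathcal V$, and finally to deduce the Jacobi identity of $\langle,\rangle$ from (b) by embedding $\mathcal V_0$ into a well-chosen semi-direct product Lie algebra. The grading claim is immediate: $\mathrm{der}_v^{\mathcal V,(1)}$ raises degree by the degree of $v$ on homogeneous elements, since $\mathrm{der}_v^{\mathcal V,(1)}(e_0) = [v,e_0]$ has degree $\deg(v)+1$ and $\mathrm{der}_v^{\mathcal V,(1)}(e_1) = 0$, hence for homogeneous $v,v'$ the three terms defining $\langle v,v'\rangle$ are all homogeneous of degree $\deg(v)+\deg(v')$. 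Antisymmetry is read off directly by swapping $v$ and $v'$.

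For (b), the key identity to check is
\[
[\mathrm{der}_v^{\mathcal V,(1)}, \mathrm{der}_{v'}^{\mathcal V,(1)}] = \mathrm{der}_{\langle v,v'\rangle}^{\mathcal V,(1)}
\]
in $\mathrm{Der}(\mathcal V)$. Both sides are derivations of $\mathcal V$, so it suffices to compare them on the generators $e_0, e_1$. Both annihilate $e_1$. On $e_0$, expanding the commutator via the Leibniz rule and applying the Jacobi identity of $\mathcal V$ to the purely Lie terms $[v',[v,e_0]] - [v,[v',e_0]] = [[v',v],e_0]$ gives exactly $[\langle v,v'\rangle, e_0] = \mathrm{der}_{\langle v,v'\rangle}^{\mathcal V,(1)}(e_0)$.

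The Jacobi identity for $\langle,\rangle$ then comes for free from the following device. Consider the semi-direct product Lie algebra $\mathfrak g := \mathrm{Der}(\mathcal V) \ltimes \mathcal V^{\mathrm{op}}$, where $\mathcal V^{\mathrm{op}}$ denotes $\mathcal V$ with the opposite Lie bracket (the derivation property is insensitive to this sign flip, so the natural action of $\mathrm{Der}(\mathcal V)$ on $\mathcal V$ remains an action by derivations on $\mathcal V^{\mathrm{op}}$). Define $\phi : \mathcal V_0 \to \mathfrak g$ by $\phi(v) := (\mathrm{der}_v^{\mathcal V,(1)}, v)$; this map is injective since the second component is the identity on $\mathcal V_0$. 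Unfolding the semi-direct product bracket and using the identity established in the previous paragraph, one finds $[\phi(v), \phi(v')]_{\mathfrak g} = \phi(\langle v, v'\rangle)$, where the second coordinate matches because $[v,v']_{\mathcal V^{\mathrm{op}}} = [v',v]$. Jacobi for $\langle,\rangle$ therefore descends from the known Jacobi identity of $\mathfrak g$ via the injective bracket-preserving map $\phi$.

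The remaining assertion, that $\mathfrak{lie}(e_0,e_1)$ is a graded $\langle,\rangle$-subalgebra, is easy: for $v \in \mathfrak{lie}(e_0,e_1)$, the derivation $\mathrm{der}_v^{\mathcal V,(1)}$ sends each generator into $\mathfrak{lie}(e_0,e_1)$, hence preserves this Lie subalgebra of $\mathcal V$, so for $v,v' \in \mathfrak{lie}(e_0,e_1)$ each of the three terms in $\langle v,v'\rangle$ lies in $\mathfrak{lie}(e_0,e_1)$. The main conceptual obstacle is the Jacobi identity itself, which a direct expansion would present as a cyclic sum of iterated derivation terms whose cancellation is not transparent; the semi-direct product trick sidesteps this by reducing everything to the familiar Jacobi identity of $\mathrm{Der}(\mathcal V) \ltimes \mathcal V^{\mathrm{op}}$.
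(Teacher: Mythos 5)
Your proof is correct, and it takes a genuinely different route from the paper's. The paper disposes of both parts by citation: it invokes \cite{EF2}, Lemma 3.8 (b) to produce the ambient Lie algebra $\mathfrak{em}^{\DR}=\mathbb Q\oplus\hat{\mathcal V}_0$ already equipped with $\langle,\rangle$, checks that $\mathcal V_0$ and $\mathfrak{lie}(e_0,e_1)$ are preserved by the bracket, and for (b) restricts and projects the morphism $\mathfrak{em}^{\DR}\to\mathrm{Der}(\hat{\mathcal V}_{\mathbb Q},\hat{\mathcal V}_{\mathbb Q})$ of \cite{EF2}, Lemma 3.9 (c). You instead give a self-contained argument: the generator computation establishing $[\mathrm{der}^{\mathcal V,(1)}_v,\mathrm{der}^{\mathcal V,(1)}_{v'}]=\mathrm{der}^{\mathcal V,(1)}_{\langle v,v'\rangle}$ is exactly right (both sides are derivations killing $e_1$, and on $e_0$ the Leibniz rule plus the Jacobi identity of $\mathcal V$ gives $[\langle v,v'\rangle,e_0]$), and the embedding $v\mapsto(\mathrm{der}^{\mathcal V,(1)}_v,v)$ into $\mathrm{Der}(\mathcal V)\ltimes\mathcal V^{\mathrm{op}}$ is a clean way to extract the Jacobi identity; the sign bookkeeping with the opposite bracket correctly matches the term $[v',v]$ in the definition of $\langle,\rangle$, and injectivity of the second component makes the descent of Jacobi legitimate. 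The two approaches are cousins --- both obtain Jacobi by exhibiting $(\mathcal V_0,\langle,\rangle)$ inside a Lie algebra where it is already known --- but yours makes the mechanism explicit and independent of the earlier papers in the series (indeed it also explains where the formula for $\langle,\rangle$ comes from), at the cost of redoing work that \cite{EF2} has already packaged; the paper's version is shorter and keeps the bracket on $\mathcal V_0$ manifestly identified with the one on $\mathfrak{em}^{\DR}$, which is used repeatedly later. Your verification of the subalgebra claim for $\mathfrak{lie}(e_0,e_1)$ (the derivation sends generators into the Lie subalgebra, hence preserves it) is also complete.
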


\begin{proof}
(a) Let $\hat{\mathcal V}_0$ be the degree completion of $\mathcal V_0$ (it is denoted $(\hat{\mathcal V}^\DR_0)_{\mathbb Q}$
in \cite{EF2}, \S3.5). In \cite{EF2}, Lemma 3.8 (b), the space $\mathfrak{em}^\DR:=\mathbb Q\oplus\hat{\mathcal V}_0$ is equipped with 
a Lie bracket $\langle,\rangle$. One checks that $\hat{\mathcal V}_0$ and $\mathcal V_0$ are preserved by this bracket, and are therefore 
Lie subalgebras of $\mathfrak{em}^\DR$. One also checks the bracket to be graded. 

Let also $\mathfrak{lie}(e_0,e_1)^\wedge$ be the degree completion of $\mathfrak{lie}(e_0,e_1)$ (these spaces are denoted 
$(\mathfrak f_2)_{\mathbb Q}^\wedge$ and $(\mathfrak f_2)_{\mathbb Q}$ in \cite{EF2}, \S3.5). In {\it loc. cit.,} it is proved that 
the space $\mathfrak{g}^\DR:=\mathbb Q\oplus\mathfrak{lie}(e_0,e_1)^\wedge$ is a Lie subalgebra of $\mathfrak{em}^\DR$. 
One checks that $\mathfrak{lie}(e_0,e_1)^\wedge$ and $\mathfrak{lie}(e_0,e_1)$ are Lie subalgebras of $\mathfrak g^\DR$. 

(b) In \cite{EF2}, Lemma 3.9, (c), a Lie algebra morphism denoted $\mathfrak{em}^\DR\to\mathrm{Der}(\hat{\mathcal V}_{\mathbb Q},
\hat{\mathcal V}_{\mathbb Q})$, $(\nu,x)\mapsto (\mathrm{der}_{(\nu,x)}^{\mathcal V,(1),\DR},\mathrm{der}_{(\nu,x)}^{\mathcal V,(10),\DR})$ 
is constructed. By \cite{EF2}, Lemma 3.9 (a), $\mathrm{Der}(\hat{\mathcal V}_{\mathbb Q},\hat{\mathcal V}_{\mathbb Q})$ is a Lie subalgebra 
of $\mathrm{Der}(\hat{\mathcal V}_{\mathbb Q})\times\mathrm{End}_{\mathbb Q\operatorname{-vec}}(\hat{\mathcal V}_{\mathbb Q})$, so that the 
projection on the first 
factor of this product is a Lie algebra morphism. Post-composing the Lie algebra morphism $\mathfrak{em}^\DR\to
\mathrm{Der}(\hat{\mathcal V}_{\mathbb Q},\hat{\mathcal V}_{\mathbb Q})$ with this projection and pre-composing it with the inclusion of 
$\mathcal V_0$, one obtains a Lie algebra morphism $\mathcal V_0\to\mathrm{Der}(\hat{\mathcal V})$, whose image can be shown to be contained in 
the Lie subalgebra $\mathrm{Der}(\mathcal V)$ of $\mathrm{Der}(\hat{\mathcal V})$. The resulting map $\mathcal V_0\to\mathrm{Der}(\mathcal V)$ 
is therefore a Lie algebra morphism, and one checks it to be given by $x\mapsto\mathrm{der}^{\mathcal V,(1)}_x$. 
\end{proof}

Let $\theta : \mathfrak{lie}(e_0,e_1)\to\mathcal V_0$ be the map defined by $\theta(x):=x-(x|e_0)e_0+\sum_{n\geq1}(1/n)
(x|e_0^{n-1}e_1)e_1^n$, where $((-|w))_{w\text{ word in }e_0,e_1}$ is the collection of maps $\mathcal V\to\mathbb Q$ such that 
$x=\sum_{w\text{ word in }e_0,e_1}(x|w)w$. 

\begin{lem}\label{lemma:theta:051121}
The map $\theta$ induces a Lie algebra morphism $\theta : (\mathfrak{lie}(e_0,e_1),\langle,\rangle)\to(\mathcal V_0,\langle,\rangle)$. 
\end{lem}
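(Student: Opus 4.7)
The plan is to decompose $\theta = \iota + c$, where $\iota : \mathfrak{lie}(e_0,e_1)\hookrightarrow \mathcal V_0$ is the inclusion and $c(x) := -(x|e_0)e_0 + \sum_{n\geq 1}(1/n)(x|e_0^{n-1}e_1)e_1^n$ takes values in the subspace $Z := \mathbb Q e_0 \oplus \bigoplus_{n\geq 1}\mathbb Q e_1^n$ of $\mathcal V_0$. By bilinearity of $\langle,\rangle$, establishing the Lie morphism property reduces to two independent claims: \emph{(A)} $Z$ lies in the centre of $(\mathcal V_0,\langle,\rangle)$, and \emph{(B)} $c(\langle v,v'\rangle) = 0$ for all $v, v' \in \mathfrak{lie}(e_0,e_1)$.

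For \emph{(A)}, the centrality of $e_0$ follows from the fact that $\mathrm{der}^{\mathcal V,(1)}_{e_0}$ kills both generators (since $[e_0,e_0]=0$), hence is the zero derivation, so $\langle e_0, w\rangle = 0 - [w,e_0] + [w,e_0] = 0$ for all $w$. For $e_1^n$ with $n \geq 1$, I would observe that the algebra derivation $\mathrm{der}^{\mathcal V,(1)}_{e_1^n}$ coincides with the inner derivation $\mathrm{ad}(e_1^n)$ on $\mathcal V$, since both are algebra derivations sending $e_0\mapsto [e_1^n,e_0]$ and $e_1\mapsto 0$; combined with $\mathrm{der}^{\mathcal V,(1)}_w(e_1^n)=0$ (the derivation annihilates $e_1$), this yields $\langle e_1^n, w\rangle = [e_1^n,w] - 0 + [w,e_1^n] = 0$.

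For \emph{(B)}, the key device is the auxiliary $\mathbb Z_{\geq 0}$-grading on $\mathcal V$ counting occurrences of $e_1$. The bracket $\langle,\rangle$ is graded for this grading: if $v,v'$ are $e_1$-homogeneous of weights $q$ and $q'$, then $\langle v,v'\rangle$ is $e_1$-homogeneous of weight $q+q'$. Indeed, $[v,e_0]$ has the same $e_1$-weight as $v$, so $\mathrm{der}^{\mathcal V,(1)}_v$ shifts $e_1$-weight by $q$, and the associative commutator adds weights. The Lie algebra $\mathfrak{lie}(e_0,e_1)$ is respected by this grading (each Lie monomial is $e_1$-homogeneous), and its weight-$0$ piece is exactly $\mathbb Q e_0$. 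Decomposing $v,v'$ into their $e_1$-homogeneous parts, only the bi-weights $(0,1)$ and $(1,0)$ contribute to the coefficient of $e_0^{n-1}e_1$ (a word of $e_1$-weight $1$) in $\langle v,v'\rangle$; in both cases one argument lies in $\mathbb Q e_0$, which is central by \emph{(A)}, so the contribution vanishes. The coefficient of $e_0$ in $\langle v,v'\rangle$ is zero by total-degree reasons, since $\langle v,v'\rangle$ has total degree $\geq 2$.

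Combining: \emph{(A)} yields $\langle \theta(v),\theta(v')\rangle = \langle v + c(v), v' + c(v')\rangle = \langle v,v'\rangle$ since $c(v), c(v') \in Z$ are central, while \emph{(B)} yields $\theta(\langle v,v'\rangle) = \langle v,v'\rangle + c(\langle v,v'\rangle) = \langle v,v'\rangle$; the two sides coincide. The main delicate point is the verification of \emph{(B)}, and especially the identification of the $e_1$-weight-$0$ subspace of $\mathfrak{lie}(e_0,e_1)$ as $\mathbb Q e_0$ (which follows from the standard fact that $\{e_0^k : k \geq 2\}$ are not Lie elements); the centrality computations in \emph{(A)} are routine but essential.
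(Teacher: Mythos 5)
Your proof is correct, but it takes a genuinely different route from the paper. The paper's proof is a two-line reduction: it invokes \cite{EF2}, Lemma 3.8 (c), which already asserts that $(\nu,x)\mapsto(\nu,\theta(x))$ is a Lie algebra morphism $\mathfrak g^{\DR}\to\mathfrak{em}^{\DR}$ between the completed, $\mathbb Q1$-extended Lie algebras, and then merely observes that this map restricts to the uncompleted subspaces $\mathfrak{lie}(e_0,e_1)\subset\mathfrak g^{\DR}$ and $\mathcal V_0\subset\mathfrak{em}^{\DR}$. You instead give a self-contained verification: writing $\theta=\mathrm{incl}+c$ with $c$ valued in $Z=\mathbb Q e_0\oplus\bigoplus_{n\geq1}\mathbb Q e_1^n$, you check that $Z$ is central for $\langle,\rangle$ (via $\mathrm{der}^{\mathcal V,(1)}_{e_0}=0$ and $\mathrm{der}^{\mathcal V,(1)}_{e_1^n}=\mathrm{ad}(e_1^n)$, both correct since these are algebra derivations agreeing on the generators) and that $c$ kills brackets of Lie elements, using the $e_1$-weight grading of $\langle,\rangle$ together with the facts that the $e_1$-weight-zero part of $\mathfrak{lie}(e_0,e_1)$ is $\mathbb Q e_0$ and that $\langle v,v'\rangle$ has total degree at least $2$. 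All steps check out (note that antisymmetry of $\langle,\rangle$, needed to pass from one-sided to two-sided centrality, and the closure of $\mathfrak{lie}(e_0,e_1)$ under $\langle,\rangle$, needed for $\theta(\langle v,v'\rangle)$ to be the relevant expression, are both supplied by Lemma \ref{lem:2810:a}). What your approach buys is transparency and independence from \cite{EF2}: it isolates the precise mechanism --- the correction terms of $\theta$ are central and never occur in brackets of Lie elements --- whereas the paper's citation-based proof buys brevity and consistency with its systematic reuse of the structures of \cite{EF2}.
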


\begin{proof} It follows from \cite{EF2}, Lemma 3.8 (c), that the map $\theta : \mathfrak g^\DR\to\mathfrak{em}^\DR$
given by $(\nu,x)\mapsto(\nu,\theta(x))$ is a Lie algebra morphism. One checks that this map takes the subspace 
$\mathfrak{lie}(e_0,e_1)$ of the source to the subspace $\mathcal V_0$ of the target, which implies the statement. 
\end{proof}

\subsection{Lie algebra action of $(\mathcal V_0,\langle,\rangle)$ on $\mathrm{Hom}_{\mathbb Q\operatorname{-vec}}(\mathcal W,\mathcal 
W^{\otimes2})$}\label{sect:2:2:2701}

Let $\mathcal W:=\mathbb Q\oplus \mathcal Ve_1$, then $\mathcal W\subset\mathcal V$ is a subalgebra (it is denoted 
$\mathcal W^\DR$ in \cite{EF1}, \S1.1 when $\mathbf k=\mathbb Q$). 

\begin{lem}\label{lem:2810:b}
(a) For $v\in\mathcal V_0$,  $\mathrm{der}^{\mathcal V,(1)}_v$  restricts to a derivation $\mathrm{der}^{\mathcal W,(1)}_v$ of $\mathcal W$. 

(b) The map $(\mathcal V_0,\langle,\rangle)\to\mathrm{Der}(\mathcal W)$, $v\mapsto \mathrm{der}^{\mathcal W,(1)}_v$ is a Lie algebra morphism. 
\end{lem}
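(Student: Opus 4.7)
The plan is straightforward: part (b) will reduce to part (a) by the functoriality of restriction of derivations to invariant subalgebras, so the only genuine content is to verify that $\mathrm{der}^{\mathcal V,(1)}_v$ preserves the subalgebra $\mathcal W$ for every $v\in\mathcal V_0$.

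For part (a), I would use the decomposition $\mathcal W=\mathbb Q\oplus\mathcal Ve_1$. A general element of $\mathcal W$ has the form $\lambda+xe_1$ with $\lambda\in\mathbb Q$ and $x\in\mathcal V$. Since $\mathrm{der}^{\mathcal V,(1)}_v$ annihilates $\mathbb Q$ and sends $e_1\mapsto 0$, the Leibniz rule gives
$$\mathrm{der}^{\mathcal V,(1)}_v(\lambda+xe_1)=\mathrm{der}^{\mathcal V,(1)}_v(x)\cdot e_1+x\cdot\mathrm{der}^{\mathcal V,(1)}_v(e_1)=\mathrm{der}^{\mathcal V,(1)}_v(x)\cdot e_1,$$
which lies in $\mathcal Ve_1\subset\mathcal W$. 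Thus $\mathcal W$ is stable under $\mathrm{der}^{\mathcal V,(1)}_v$, and its restriction is a derivation of the subalgebra $\mathcal W$, which we denote $\mathrm{der}^{\mathcal W,(1)}_v$.

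For part (b), once (a) is established, each $\mathrm{der}^{\mathcal V,(1)}_v$ belongs to the Lie subalgebra of $\mathrm{Der}(\mathcal V)$ consisting of those derivations that preserve $\mathcal W$, and the restriction map from this subalgebra to $\mathrm{Der}(\mathcal W)$ sending $D\mapsto D|_{\mathcal W}$ is visibly a Lie algebra morphism (compatibility with both brackets is immediate). Composing it with the Lie algebra morphism $\mathrm{der}^{\mathcal V,(1)}_-:(\mathcal V_0,\langle,\rangle)\to\mathrm{Der}(\mathcal V)$ supplied by Lemma \ref{lem:2810:a}(b) yields exactly the assignment $v\mapsto\mathrm{der}^{\mathcal W,(1)}_v$, which is therefore a Lie algebra morphism as claimed. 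No substantive obstacle is expected: the argument is a purely formal consequence of Lemma \ref{lem:2810:a}(b) together with the invariance check of part (a).
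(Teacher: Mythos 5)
Your proposal is correct and follows essentially the same route as the paper: part (a) is exactly the paper's observation that stability of $\mathcal W=\mathbb Q\oplus\mathcal Ve_1$ is an immediate consequence of $\mathrm{der}^{\mathcal V,(1)}_v(e_1)=0$ via the Leibniz rule, and part (b) is the paper's deduction from (a) together with Lemma \ref{lem:2810:a}(b), with the restriction-is-a-Lie-morphism step made explicit. No gaps.
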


\begin{proof} (a) follows from the statement preceding \cite{EF2}, Lemma 3.10; it is an immediate consequence of 
$\mathrm{der}^{\mathcal V,(1)}_v(e_1)=0$. (b) follows from (a) and from Lemma \ref{lem:2810:a}; this can also be derived 
from \cite{EF2}, Lemma 3.10, (b). 
\end{proof}

\begin{lem}\label{lem:2810:c}
The $\mathbb Q$-vector space $\mathrm{Hom}_{\mathbb Q\operatorname{-vec}}(\mathcal W,\mathcal W^{\otimes2})$ is equipped with a  
$(\mathcal V_0,\langle,\rangle)$-module structure, 
the action of $v\in\mathcal V_0$ on $h\in\mathrm{Hom}_{\mathbb Q\operatorname{-vec}}(\mathcal W,\mathcal W^{\otimes2})$ being given by 
\begin{equation}\label{act:on:Delta:1309}
v\cdot h:=(\mathrm{der}^{\mathcal W,(1)}_v\otimes\mathrm{id}+\mathrm{id}\otimes \mathrm{der}^{\mathcal W,(1)}_v)\circ h-h\circ
\mathrm{der}^{\mathcal W,(1)}_v. 
\end{equation} 
For $h\in\mathrm{Hom}_{\mathbb Q\operatorname{-vec}}(\mathcal W,\mathcal W^{\otimes2})$, we denote by $- \cdot h : \mathcal V_0
\to\mathrm{Hom}_{\mathbb Q\operatorname{-vec}}(\mathcal W,\mathcal W^{\otimes2})$ the map $v\mapsto v\cdot h$.  
\end{lem}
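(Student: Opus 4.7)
The plan is to realize formula (\ref{act:on:Delta:1309}) as an instance of a general functorial construction of Lie algebra actions on $\mathrm{Hom}$-spaces. Specifically, whenever a Lie algebra $(\mathfrak g,[,])$ is equipped with Lie algebra morphisms $\rho_V\colon\mathfrak g\to\mathrm{End}_{\mathbb Q\operatorname{-vec}}(V)$ and $\rho_W\colon\mathfrak g\to\mathrm{End}_{\mathbb Q\operatorname{-vec}}(W)$, the formula
$$
x\cdot f:=\rho_W(x)\circ f-f\circ\rho_V(x)
$$
defines a Lie algebra action of $\mathfrak g$ on $\mathrm{Hom}_{\mathbb Q\operatorname{-vec}}(V,W)$. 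This is a standard one-line computation: expanding the commutator $[x\cdot-,y\cdot-](f)$ produces four compositions, the two cross-terms cancel, and what remains is $\rho_W([x,y])\circ f-f\circ\rho_V([x,y])=[x,y]\cdot f$, using only the fact that $\rho_V$ and $\rho_W$ are Lie algebra morphisms.

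First I would invoke Lemma \ref{lem:2810:b}(b) to take for $\rho_V$ the Lie algebra morphism $\mathrm{der}^{\mathcal W,(1)}_-\colon(\mathcal V_0,\langle,\rangle)\to\mathrm{Der}(\mathcal W)\hookrightarrow\mathrm{End}_{\mathbb Q\operatorname{-vec}}(\mathcal W)$. Next I would construct the companion morphism $\rho_W\colon(\mathcal V_0,\langle,\rangle)\to\mathrm{End}_{\mathbb Q\operatorname{-vec}}(\mathcal W^{\otimes2})$ by the diagonal formula
$$
v\longmapsto \mathrm{der}^{\mathcal W,(1)}_v\otimes\mathrm{id}+\mathrm{id}\otimes\mathrm{der}^{\mathcal W,(1)}_v,
$$
which is a Lie algebra morphism because the Leibniz assignment $A\mapsto A\otimes\mathrm{id}+\mathrm{id}\otimes A$ sends commutators to commutators (equivalently, the tensor product of two representations of a Lie algebra is again a representation). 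Finally, substituting these $\rho_V$ and $\rho_W$ in the general principle reproduces (\ref{act:on:Delta:1309}) and yields the asserted module structure.

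I do not expect any substantive obstacle: the entire argument is a routine concatenation of standard functorial constructions layered on top of the Lie algebra morphism $\mathcal V_0\to\mathrm{Der}(\mathcal W)$ already established in Lemma \ref{lem:2810:b}(b). The only computational input is the universal identity showing that the conjugation-type bracket on $\mathrm{Hom}(V,W)$ defines a Lie action, and this is independent of the specific choice of $\mathcal V_0$, $\mathcal W$ or $\mathcal W^{\otimes2}$.
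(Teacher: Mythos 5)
Your proposal is correct and follows essentially the same route as the paper: the paper also composes the Lie algebra morphism $\mathcal V_0\to\mathrm{Der}(\mathcal W)\subset\mathrm{End}_{\mathbb Q\operatorname{-vec}}(\mathcal W)$ of Lemma \ref{lem:2810:b}(b) with the map $f\mapsto\bigl(h\mapsto (f\otimes\mathrm{id}+\mathrm{id}\otimes f)\circ h-h\circ f\bigr)$, which packages in one step your two ingredients (the diagonal representation on $\mathcal W^{\otimes2}$ and the general $\mathrm{Hom}$-space action). The underlying cancellation-of-cross-terms computation is identical in both versions.
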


\begin{proof} Composing the Lie algebra morphism from Lemma \ref{lem:2810:b} (b), the Lie algebra inclusion 
$\mathrm{Der}(\mathcal W)\subset\mathrm{End}_{\mathbb Q\operatorname{-vec}}(\mathcal W)$, and the Lie algebra morphism 
$\mathrm{End}_{\mathbb Q\operatorname{-vec}}(\mathcal W)\to\mathrm{End}_{\mathbb Q\operatorname{-vec}}(\mathrm{Hom}_{\mathbb 
Q\operatorname{-vec}}(\mathcal W,\mathcal W^{\otimes2}))$
given by $f\mapsto (h\mapsto (f\otimes \mathrm{id}_{\mathcal W}+\mathrm{id}_{\mathcal W}\otimes f)\circ h-h\circ f)$, 
one obtains a Lie algebra morphism $(\mathcal V_0,\langle,\rangle)\to\mathrm{End}_{\mathbb Q\operatorname{-vec}}(\mathrm{Hom}_{\mathbb 
Q\operatorname{-vec}}(\mathcal W,
\mathcal W^{\otimes2}))$, i.e. a $(\mathcal V_0,\langle,\rangle)$-module structure over $\mathrm{Hom}_{\mathbb Q\operatorname{-vec}}(\mathcal W,
\mathcal W^{\otimes2})$, which is given by the announced formula.  
\end{proof}

Recall that if $\varphi : \mathfrak h\to\mathfrak g$ is a Lie algebra morphism and if $V$ is a $\mathfrak g$-module, then 
$V$ is equipped with a $\mathfrak h$-module structure by $y\bullet v:=\varphi(y)\cdot v$, called the pull-back module of the 
latter structure by $\varphi$. In particular, the pull-back of the $(\mathcal V_0,\langle,\rangle)$-module structure from 
Lemma \ref{lem:2810:c} on $\mathrm{Hom}_{\mathbb Q\operatorname{-vec}}(\mathcal W,\mathcal W^{\otimes2})$ by $\theta$ is a 
$\mathfrak{lie}(e_0,e_1)$-module structure on the same vector space. 

Recall that if $\mathfrak g$ is a Lie algebra, the data of a pair $(V,v)$ of a $\mathfrak g$-module $V$ and an element $v\in V$
gives rise to a Lie subalgebra $\mathfrak{stab}_{\mathfrak g}(v)$ of $\mathfrak g$, defined as $\{x\in\mathfrak g|x\cdot v=0\}$. 

\begin{lem}\label{lem:inclusion:3110}
Let $h\in \mathrm{Hom}_{\mathbb Q\operatorname{-vec}}(\mathcal W,\mathcal W^{\otimes2})$. 

(a) The stabilizer Lie subalgebra of $(\mathcal V_0,\langle,\rangle)$ of $h$ 
is $\mathfrak{stab}_{\mathcal V_0}(h)=\{v\in\mathcal V_0|v\cdot h=0\}$.   

(b) One has $\mathfrak{stab}_{\mathfrak{lie}(e_0,e_1)}(h)=\theta^{-1}(\mathfrak{stab}_{\mathcal V_0}(h))$. 
\end{lem}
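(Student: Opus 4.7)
The plan is to obtain both statements as essentially formal consequences of the definitions, unpacking the $(\mathcal V_0,\langle,\rangle)$-module structure on $\mathrm{Hom}_{\mathbb Q\operatorname{-vec}}(\mathcal W,\mathcal W^{\otimes2})$ from Lemma \ref{lem:2810:c} and the pull-back construction recalled just before the statement. There is no serious obstacle; the only care needed is to make sure the module structures used on each side of (b) match the ones implicit in the respective stabilizers.

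For part (a), I would recall that the stabilizer of an element $v$ of a $\mathfrak g$-module $V$ is the Lie subalgebra $\{x\in\mathfrak g\mid x\cdot v=0\}$. Applying this to the $(\mathcal V_0,\langle,\rangle)$-module $V=\mathrm{Hom}_{\mathbb Q\operatorname{-vec}}(\mathcal W,\mathcal W^{\otimes2})$ from Lemma \ref{lem:2810:c}, and to $v=h$, yields immediately
\[
\mathfrak{stab}_{\mathcal V_0}(h)=\{v\in\mathcal V_0\mid v\cdot h=0\},
\]
where $v\cdot h$ is the action given by \eqref{act:on:Delta:1309}. This is the claim.

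For part (b), I would invoke Lemma \ref{lemma:theta:051121}, which gives a Lie algebra morphism $\theta:(\mathfrak{lie}(e_0,e_1),\langle,\rangle)\to(\mathcal V_0,\langle,\rangle)$, and use it to pull back the $\mathcal V_0$-module structure on $\mathrm{Hom}_{\mathbb Q\operatorname{-vec}}(\mathcal W,\mathcal W^{\otimes2})$. By the pull-back formula $y\bullet h=\theta(y)\cdot h$ recalled just before the statement, an element $x\in\mathfrak{lie}(e_0,e_1)$ satisfies $x\bullet h=0$ if and only if $\theta(x)\cdot h=0$, that is, by part (a), if and only if $\theta(x)\in\mathfrak{stab}_{\mathcal V_0}(h)$. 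Applying the stabilizer definition once more, but now to the pull-back $\mathfrak{lie}(e_0,e_1)$-module structure, this reads
\[
\mathfrak{stab}_{\mathfrak{lie}(e_0,e_1)}(h)=\{x\in\mathfrak{lie}(e_0,e_1)\mid \theta(x)\in\mathfrak{stab}_{\mathcal V_0}(h)\}=\theta^{-1}(\mathfrak{stab}_{\mathcal V_0}(h)),
\]
as desired. The one point to emphasize in writing this up is that the notation $\mathfrak{stab}_{\mathfrak{lie}(e_0,e_1)}(h)$ implicitly refers to this pull-back module structure; once that convention is fixed, the identity is tautological.
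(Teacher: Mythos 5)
Your proof is correct and follows the same route as the paper: part (a) is the definition of a stabilizer specialized to the module of Lemma \ref{lem:2810:c}, and part (b) is the general fact that $\mathfrak{stab}_{\mathfrak h}(v)=\varphi^{-1}(\mathfrak{stab}_{\mathfrak g}(v))$ for a Lie algebra morphism $\varphi$ and a pulled-back module, which you simply verify inline for $\varphi=\theta$. Your closing remark that the notation $\mathfrak{stab}_{\mathfrak{lie}(e_0,e_1)}(h)$ refers to the pull-back module structure is exactly the convention the paper fixes in the paragraph preceding the lemma.
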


\begin{proof} (a) %and (b) 
is a specialization of the definition of a stabilizer Lie subalgebra. (b) is a consequence of the fact 
that if $\varphi : \mathfrak h\to\mathfrak g$ is a Lie algebra morphism, if $V$ is a 
$\mathfrak g$-module and if $v\in V$, then $\mathfrak{stab}_{\mathfrak h}(v)=\varphi^{-1}(\mathfrak{stab}_{\mathfrak g}(v))$. 
\end{proof}

\subsection{Corestriction of $-\cdot h$ to $\mathrm{Der}_{h}(\mathcal W,\mathcal W^{\otimes2})$}\label{sect:2:3:2701}

Assume that $h\in\mathrm{Hom}_{\mathbb Q\operatorname{-alg}}(\mathcal W,\mathcal W^{\otimes2})$. 

\begin{defn}
 $\mathrm{Der}_{h}(\mathcal W,\mathcal W^{\otimes2})$ is the set of derivations of $\mathcal W$ with values in $\mathcal 
 W^{\otimes2}$, viewed as a $\mathcal W$-bimodule using $h$; explicitly, this is the set of 
$\mathbb Q$-linear maps $\delta : \mathcal W\to\mathcal W^{\otimes2}$ such that $\delta(ww')=\delta(w)h(w')+h(w)\delta(w')$. 
\end{defn}

\begin{lem}\label{lemma:2039}
The map $D\mapsto(D\otimes\mathrm{id}+\mathrm{id}\otimes D)\circ h-h\circ D$ defines a linear map $\mathrm{Der}(\mathcal 
W)\to\mathrm{Der}_{h}(\mathcal W,\mathcal W^{\otimes2})$. 
\end{lem}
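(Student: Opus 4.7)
The plan is a direct verification of the Leibniz rule defining $\mathrm{Der}_{h}(\mathcal W,\mathcal W^{\otimes2})$, combined with a trivial observation of linearity in $D$. Given $D\in\mathrm{Der}(\mathcal W)$, set $\delta_D:=(D\otimes\mathrm{id}+\mathrm{id}\otimes D)\circ h-h\circ D$. The map $D\mapsto\delta_D$ is manifestly $\mathbb Q$-linear since each of the three operations $(-)\otimes\mathrm{id}$, $\mathrm{id}\otimes(-)$, and postcomposition by $h$ is linear, so the only content is that $\delta_D$ lands in $\mathrm{Der}_{h}(\mathcal W,\mathcal W^{\otimes2})$.

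To show this, I would rely on three standard facts: (i) if $D$ is a derivation of $\mathcal W$, then $\tilde D:=D\otimes\mathrm{id}+\mathrm{id}\otimes D$ is a derivation of the algebra $\mathcal W^{\otimes 2}$ (with componentwise product); (ii) $h$ is an algebra morphism, so $h(ww')=h(w)h(w')$; (iii) $D$ satisfies $D(ww')=D(w)w'+wD(w')$. Applying (i) and (ii) one computes
$$
\tilde D(h(ww'))=\tilde D(h(w)h(w'))=\tilde D(h(w))\,h(w')+h(w)\,\tilde D(h(w')),
$$
while (iii) and the fact that $h$ is an algebra morphism give
$$
h(D(ww'))=h(D(w))h(w')+h(w)h(D(w')).
$$

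Subtracting these two identities yields
$$
\delta_D(ww')=\bigl(\tilde D(h(w))-h(D(w))\bigr)h(w')+h(w)\bigl(\tilde D(h(w'))-h(D(w'))\bigr)=\delta_D(w)\,h(w')+h(w)\,\delta_D(w'),
$$
which is exactly the condition that $\delta_D$ is an $h$-derivation of $\mathcal W$ with values in the $\mathcal W$-bimodule $\mathcal W^{\otimes 2}$ whose bimodule structure is pulled back from the $\mathcal W^{\otimes 2}$-bimodule structure via $h$. There is no real obstacle here; the only point requiring a moment's thought is fact (i), which itself is a one-line verification using the definition of the product in $\mathcal W^{\otimes 2}$ and bilinearity. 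Linearity of $D\mapsto\delta_D$ then completes the proof.
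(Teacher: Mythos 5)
Your proof is correct and follows essentially the same route as the paper: the paper simply packages your computation as the observation that $h\circ D$ and $(D\otimes\mathrm{id}+\mathrm{id}\otimes D)\circ h$ are each separately elements of $\mathrm{Der}_{h}(\mathcal W,\mathcal W^{\otimes2})$, using exactly your facts (i)--(iii). Nothing is missing.
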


\begin{proof} This follows from the facts that if $D\in\mathrm{Der}(\mathcal W)$, then 
$h\circ D\in\mathrm{Der}_{h}(\mathcal W,\mathcal W^{\otimes2})$ and $D\otimes\mathrm{id}+\mathrm{id}\otimes 
D\in\mathrm{Der}(\mathcal{W}^{\otimes2})$, and if $D'\in\mathrm{Der}(\mathcal{W}^{\otimes2})$, then $D'\circ 
h\in\mathrm{Der}_{h}(\mathcal{W},\mathcal{W}^{\otimes2})$. 
\end{proof}

\begin{cor}\label{corA}
For $v\in\mathcal V_0$, one has $v\cdot h\in \mathrm{Der}_{h}(\mathcal W,\mathcal W^{\otimes2})$, so the 
map $\mathcal V_0\to\mathrm{Hom}_{\mathbb Q\operatorname{-vec}}(\mathcal W,\mathcal W^{\otimes2})$, $v\mapsto v\cdot h$ admits a 
factorization $\mathcal V_0\to\mathrm{Der}_{h}(\mathcal W,\mathcal W^{\otimes2})\subset
\mathrm{Hom}_{\mathbb Q\operatorname{-vec}}(\mathcal W,\mathcal W^{\otimes2})$.  
\end{cor}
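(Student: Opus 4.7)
The plan is to recognize the corollary as a direct combination of Lemmas \ref{lem:2810:b} and \ref{lemma:2039}, with essentially no new content beyond matching definitions. Fix $v\in\mathcal V_0$. By Lemma \ref{lem:2810:b}(a), the operator $\mathrm{der}^{\mathcal W,(1)}_v$ is a derivation of $\mathcal W$, i.e.\ an element of $\mathrm{Der}(\mathcal W)$. I would then compare the defining formula (\ref{act:on:Delta:1309}) for $v\cdot h$ with the formula appearing in Lemma \ref{lemma:2039}: setting $D:=\mathrm{der}^{\mathcal W,(1)}_v$, the two expressions coincide, so $v\cdot h$ is precisely the image of $D$ under the linear map
$$
\mathrm{Der}(\mathcal W)\longrightarrow\mathrm{Der}_{h}(\mathcal W,\mathcal W^{\otimes 2}),\qquad D\mapsto (D\otimes\mathrm{id}+\mathrm{id}\otimes D)\circ h-h\circ D
$$
provided by Lemma \ref{lemma:2039}. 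In particular $v\cdot h\in\mathrm{Der}_{h}(\mathcal W,\mathcal W^{\otimes2})$.

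For the factorization statement, I would simply observe that the map $v\mapsto v\cdot h$ is obtained as the composition of the assignment $\mathcal V_0\to\mathrm{Der}(\mathcal W)$, $v\mapsto\mathrm{der}^{\mathcal W,(1)}_v$ (a linear map by Lemma \ref{lem:2810:b}(b), though only linearity is needed here), with the linear map of Lemma \ref{lemma:2039}. The resulting map $\mathcal V_0\to\mathrm{Der}_{h}(\mathcal W,\mathcal W^{\otimes 2})$, followed by the inclusion $\mathrm{Der}_{h}(\mathcal W,\mathcal W^{\otimes 2})\subset\mathrm{Hom}_{\mathbb Q\operatorname{-vec}}(\mathcal W,\mathcal W^{\otimes 2})$, is exactly $v\mapsto v\cdot h$, yielding the announced factorization.

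There is no real obstacle here: the corollary is a bookkeeping step whose purpose is to corestrict the target of $-\cdot h$ (once $h$ is assumed to be an algebra morphism) so that in the next subsection one can view $\mathfrak{stab}_{\mathcal V_0}(h)$ as the kernel of a map landing in $\mathrm{Der}_{h}(\mathcal W,\mathcal W^{\otimes 2})$, as anticipated in the outline of proof of the main theorem.
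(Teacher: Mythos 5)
Your proposal is correct and coincides with the paper's own proof, which simply applies Lemma \ref{lemma:2039} to $D:=\mathrm{der}_v^{\mathcal W,(1)}$; you have merely spelled out the identification of formula \eqref{act:on:Delta:1309} with the map of that lemma and the resulting factorization, which the paper leaves implicit.
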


\begin{proof} Follows by application of Lemma \ref{lemma:2039} to $D:=\mathrm{der}_v^{\mathcal{V},(1)}$. 
\end{proof}

\begin{lem}\label{lem:2:10}
If $h\in \mathrm{Hom}_{\mathbb Q\operatorname{-alg}}(\mathcal W,\mathcal W^{\otimes2})$, then  
$\mathfrak{stab}_{\mathcal V_0}(h)=\{v\in\mathcal V_0|v\cdot h=0$ (equality in 
$\mathrm{Der}_{h}(\mathcal W,\mathcal W^{\otimes2})$)$\}$.   
\end{lem}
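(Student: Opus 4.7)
The statement is essentially a reformulation of Lemma \ref{lem:inclusion:3110}(a) made possible by the hypothesis that $h$ is an algebra morphism. The plan is to combine the two characterizations already in hand.

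First I would recall from Lemma \ref{lem:inclusion:3110}(a) that, for any $h \in \mathrm{Hom}_{\mathbb Q\operatorname{-vec}}(\mathcal W, \mathcal W^{\otimes 2})$, the stabilizer Lie subalgebra is
\[
\mathfrak{stab}_{\mathcal V_0}(h) = \{v \in \mathcal V_0 \mid v \cdot h = 0 \text{ in } \mathrm{Hom}_{\mathbb Q\operatorname{-vec}}(\mathcal W, \mathcal W^{\otimes 2})\}.
\]
Next I would invoke the additional hypothesis $h \in \mathrm{Hom}_{\mathbb Q\operatorname{-alg}}(\mathcal W, \mathcal W^{\otimes 2})$: by Corollary \ref{corA}, this hypothesis ensures that $v \cdot h \in \mathrm{Der}_h(\mathcal W, \mathcal W^{\otimes 2})$ for every $v \in \mathcal V_0$, so that the linear map $-\cdot h$ corestricts to a map $\mathcal V_0 \to \mathrm{Der}_h(\mathcal W, \mathcal W^{\otimes 2})$.

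The conclusion follows immediately from the fact that the inclusion $\mathrm{Der}_h(\mathcal W, \mathcal W^{\otimes 2}) \subset \mathrm{Hom}_{\mathbb Q\operatorname{-vec}}(\mathcal W, \mathcal W^{\otimes 2})$ is injective: an element $v \cdot h$ (viewed as an $h$-derivation) vanishes iff it vanishes as a $\mathbb Q$-linear map $\mathcal W \to \mathcal W^{\otimes 2}$. Thus the kernel of the corestricted map coincides with the kernel of the original map, which by the first step is $\mathfrak{stab}_{\mathcal V_0}(h)$. There is no real obstacle here; the content of the lemma is simply the observation that, under the algebra-morphism hypothesis, one may test the stabilizer condition inside the smaller space $\mathrm{Der}_h(\mathcal W, \mathcal W^{\otimes 2})$, which will be convenient for the later analysis of $-\cdot h$ as the composition $\mathbf i \circ \mathbf h \circ \mathbf H$ announced in the introduction.
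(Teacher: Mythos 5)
Your proof is correct and is exactly the paper's argument: the paper's proof consists of the single line ``Follows from Lemma \ref{lem:inclusion:3110} (a) and Corollary \ref{corA}'', and you have simply spelled out why those two facts suffice (the corestriction along the injective inclusion $\mathrm{Der}_h(\mathcal W,\mathcal W^{\otimes2})\subset\mathrm{Hom}_{\mathbb Q\operatorname{-vec}}(\mathcal W,\mathcal W^{\otimes2})$ does not change the kernel). No issues.
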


\begin{proof} Follows from Lemma \ref{lem:inclusion:3110} (a) and Corollary \ref{corA}. 
\end{proof}

\subsection{The stabilizer Lie algebras $\mathfrak{stab}_{\mathcal V_0}(\Delta^{\mathcal{W}})$ and $\mathfrak{stab}_{\mathfrak{lie}(e_0,e_1)}(\Delta^{\mathcal{W}})$}\label{sect:def:tilde:yn}

Define the family of elements $(\tilde y_a)_{a\in\mathbb Z}$ of $\mathcal W$ by  
$$
\tilde y_a:=e_0^{a-1}e_1\text{ for }a>0,\quad 
\tilde y_0:=-1,\quad 
\tilde y_a:=0\text{ for }a<0. 
$$

\begin{lem} (\cite{EF1}, \S1 (2))
There is an element $\Delta^{\mathcal W}\in \mathrm{Hom}_{\mathbb Q\operatorname{-alg}}(\mathcal W,\mathcal W^{\otimes2})$
(denoted $\Delta^{\mathcal W,\DR}$ in \cite{EF1}), uniquely determined by 
\begin{equation}\label{def:Delta:W}
\forall n>0,\quad \Delta^{\mathcal W}(\tilde y_n):=-\sum_{i=0}^n \tilde y_i\otimes \tilde y_{n-i}.  
\end{equation} 
\end{lem}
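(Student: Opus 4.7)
The plan is to reduce the statement to the universal property of a free associative algebra. The crucial observation is that $\mathcal W$ is itself the free associative $\mathbb Q$-algebra on the family $(\tilde y_n)_{n \geq 1}$. Indeed, a $\mathbb Q$-basis of $\mathcal W = \mathbb Q \oplus \mathcal V e_1$ consists of $1$ together with all words in $e_0,e_1$ ending in $e_1$. Any such word factors uniquely as
\[
e_0^{a_1-1} e_1 \, e_0^{a_2-1} e_1 \cdots e_0^{a_s-1} e_1 = \tilde y_{a_1} \tilde y_{a_2} \cdots \tilde y_{a_s}
\]
with $s \geq 1$ and $a_1,\ldots,a_s \geq 1$, and every word in the abstract free algebra on symbols $Y_n$ ($n \geq 1$) arises in this way from exactly one tuple $(a_1,\ldots,a_s)$. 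So the algebra morphism $\mathbb Q\langle Y_1,Y_2,\ldots\rangle \to \mathcal W$ sending $Y_n$ to $\tilde y_n$ is a bijection on bases, hence an isomorphism.

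Granted this freeness, the statement is an immediate application of the universal property. First I would note uniqueness: any $\mathbb Q$-algebra morphism $\mathcal W \to \mathcal W^{\otimes 2}$ is determined by its values on the generating family $(\tilde y_n)_{n \geq 1}$, so at most one such morphism satisfies the stated equations. For existence, I would define $\Delta^{\mathcal W}$ on each generator $\tilde y_n$ ($n \geq 1$) by the prescribed right-hand side $-\sum_{i=0}^n \tilde y_i \otimes \tilde y_{n-i}$ (which lies in $\mathcal W^{\otimes 2}$ since $\tilde y_0 = -1 \in \mathbb Q \subset \mathcal W$ and all other $\tilde y_i$ with $i \geq 1$ lie in $\mathcal W$) and extend multiplicatively using the universal property.

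The only nontrivial step is the identification of $\mathcal W$ as a free algebra on $(\tilde y_n)_{n \geq 1}$; once that is in place, no further calculation is needed and no compatibility with $\Delta^{\mathcal W}$ beyond the defining equations has to be verified, because the universal property itself supplies the algebra morphism. I expect this freeness verification to be the main (and only) obstacle, and it is a routine combinatorial observation about words in $e_0,e_1$ that end in $e_1$.
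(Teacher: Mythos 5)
Your proposal is correct and follows essentially the same route as the paper: the paper's entire proof is the observation that $\mathcal W$ is freely generated as an algebra by $(\tilde y_a)_{a>0}$, which you verify via the unique factorization of words ending in $e_1$ and then combine with the universal property exactly as intended. No discrepancy to report.
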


\begin{proof}
This follows from the fact that the algebra $\mathcal W$ is freely generated by $(\tilde y_a)_{a>0}$.  
\end{proof}

Note that \eqref{def:Delta:W} is also valid for $n=0$. 

\begin{lem}\label{lem:16393110}
(a) The stabilizer Lie subalgebra of $(\mathcal V_0,\langle,\rangle)$ of $\Delta^{\mathcal W}$ 
is $\mathfrak{stab}_{\mathcal V_0}(\Delta^{\mathcal W})=\{v\in\mathcal V_0|v\cdot \Delta^{\mathcal W}=0$ (equality in 
$\mathrm{Der}_{\Delta^{\mathcal W}}(\mathcal W,\mathcal W^{\otimes2})$)$\}$.   

(b) One has $\mathfrak{stab}_{\mathfrak{lie}(e_0,e_1)}(\Delta^{\mathcal W})=\theta^{-1}(\mathfrak{stab}_{\mathcal V_0}(\Delta^{\mathcal W}))$. 
\end{lem}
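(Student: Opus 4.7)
The plan is to recognize this lemma as the specialization of the general results of \S\ref{sect:2:2:2701} and \S\ref{sect:2:3:2701} to the particular element $h := \Delta^{\mathcal W}$ of $\mathrm{Hom}_{\mathbb Q\operatorname{-vec}}(\mathcal W, \mathcal W^{\otimes2})$, so there is essentially nothing to do beyond checking that the hypotheses needed earlier are satisfied here.

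First I would observe that the immediately preceding lemma (the $\mathrm{Hom}_{\mathbb Q\operatorname{-alg}}$ statement pulled from \cite{EF1}) tells us that $\Delta^{\mathcal W}$ is an algebra morphism from $\mathcal W$ to $\mathcal W^{\otimes2}$. This is exactly the running hypothesis ``$h \in \mathrm{Hom}_{\mathbb Q\operatorname{-alg}}(\mathcal W, \mathcal W^{\otimes2})$'' of \S\ref{sect:2:3:2701}, so Corollary \ref{corA} and Lemma \ref{lem:2:10} are both applicable to $h = \Delta^{\mathcal W}$.

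For part (a), I would invoke Lemma \ref{lem:2:10} directly with $h = \Delta^{\mathcal W}$: this says that the stabilizer $\mathfrak{stab}_{\mathcal V_0}(\Delta^{\mathcal W})$ consists precisely of those $v \in \mathcal V_0$ for which $v \cdot \Delta^{\mathcal W} = 0$, an equality which, by virtue of Corollary \ref{corA}, takes place in $\mathrm{Der}_{\Delta^{\mathcal W}}(\mathcal W, \mathcal W^{\otimes2})$. For part (b), I would appeal to Lemma \ref{lem:inclusion:3110} (b), again specialized at $h = \Delta^{\mathcal W}$, which gives the identity $\mathfrak{stab}_{\mathfrak{lie}(e_0,e_1)}(\Delta^{\mathcal W}) = \theta^{-1}(\mathfrak{stab}_{\mathcal V_0}(\Delta^{\mathcal W}))$ at once.

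There is no real obstacle here, since both statements are pure specializations; the only non-trivial input is the algebra-morphism property of $\Delta^{\mathcal W}$, which is furnished by the preceding lemma.
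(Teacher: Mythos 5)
Your proposal is correct and matches the paper's proof exactly: the paper also obtains (a) by specializing Lemma \ref{lem:2:10} and (b) by specializing Lemma \ref{lem:inclusion:3110} (b) to $h=\Delta^{\mathcal W}$. Your explicit check that $\Delta^{\mathcal W}\in\mathrm{Hom}_{\mathbb Q\operatorname{-alg}}(\mathcal W,\mathcal W^{\otimes2})$ (needed for the running hypothesis of \S\ref{sect:2:3:2701}) is left implicit in the paper but is the right point to verify.
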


\begin{proof}
(a) follows by specialization from Lemma \ref{lem:2:10}. (b) follows by specialization from Lemma \ref{lem:inclusion:3110}, (b).  
\end{proof}

\subsection{Relation with the Lie algebra $\mathfrak{stab}(\hat\Delta^{\mathcal{W},\DR})$ from \cite{EF2}}

Recall that the Lie algebra $(\mathfrak{lie}(e_0,e_1),\langle,\rangle)$ is $\mathbb Z_+$-graded. Denote by $(\mathfrak{lie}(e_0,e_1)^\wedge,
\langle,\rangle)$ its graded completion. Both Lie algebras are equipped with an action of the abelian Lie $\mathbb Q1$, 
the element $1\in\mathbb Q$ acting by the grading action (multiplying each graded element by its degree). By \cite{EF2}, Lemma 3.8 (b), the Lie 
algebra $\mathfrak g^\DR$ defined in \cite{EF2}, \S3.5 is equal to the corresponding semidirect product, so that 
\begin{equation}\label{g:pdt:semidirect:0511}
\mathfrak g^\DR=\mathbb Q1 \ltimes (\mathfrak{lie}(e_0,e_1)^\wedge,
\langle,\rangle).
\end{equation}
 
\begin{prop}\label{lem:08:1711}\label{prop:main:sect:2}
(a) The Lie subalgebra $\mathfrak{stab}_{\mathfrak{lie}(e_0,e_1)}(\Delta^{\mathcal W})$ of  $(\mathfrak{lie}(e_0,e_1),
\langle,\rangle)$ is graded. Denote by $\mathfrak{stab}_{\mathfrak{lie}(e_0,e_1)}(\Delta^{\mathcal W})^\wedge$ its graded completion. 

(b) The Lie subalgebra $\mathfrak{stab}_{\mathfrak{lie}(e_0,e_1)}(\Delta^{\mathcal W})^\wedge$  of $(\mathfrak{lie}(e_0,e_1)^\wedge,
\langle,\rangle)$ is stable under the action of the abelian Lie algebra $\mathbb Q1$. 

(c) The Lie subalgebra $\mathfrak{stab}(\hat\Delta^{\mathcal W,\DR})$ of $\mathfrak g^\DR$ from \cite{EF2}, §3.5 is equal to the 
corresponding semidirect product, i.e. 
$$
\mathfrak{stab}(\hat\Delta^{\mathcal W,\DR})=\mathbb Q1\ltimes \mathfrak{stab}_{\mathfrak{lie}(e_0,e_1)}(\Delta^{\mathcal W})^\wedge.
$$
\end{prop}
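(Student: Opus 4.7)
The plan is to address (a), (b), (c) in turn, using the $\mathbb Z_{\geq 0}$-grading of $\mathcal V$, $\mathcal W$ and of the maps built from them, and then to match the outcome with the definition of $\mathfrak{stab}(\hat\Delta^{\mathcal W,\DR})$ from \cite{EF2}. For (a), the key observation is that $\Delta^{\mathcal W}$ is a graded map of degree $0$: in formula \eqref{def:Delta:W}, $\tilde y_n$ has degree $n$ (with the convention that $\tilde y_0 = -1$ has degree $0$), and the right-hand side is a sum of tensors of total degree $n$; since $\Delta^{\mathcal W}$ is an algebra morphism, this extends to all of $\mathcal W$. For $v \in \mathcal V_0$ homogeneous of degree $d$, the derivation $\mathrm{der}^{\mathcal W,(1)}_v$ shifts degree by $d$, so formula \eqref{act:on:Delta:1309} shows that $v \cdot \Delta^{\mathcal W}$ lies in the degree-$d$ piece of $\mathrm{Hom}_{\mathbb Q\operatorname{-vec}}(\mathcal W, \mathcal W^{\otimes 2})$ (graded by degree-shift). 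Hence $-\cdot\Delta^{\mathcal W}$ is graded, so $\mathfrak{stab}_{\mathcal V_0}(\Delta^{\mathcal W})$ is graded in $\mathcal V_0$. Since $\theta$ is visibly graded (each summand in its defining formula preserves degree), Lemma \ref{lem:16393110} (b) then gives that $\mathfrak{stab}_{\mathfrak{lie}(e_0,e_1)}(\Delta^{\mathcal W}) = \theta^{-1}(\mathfrak{stab}_{\mathcal V_0}(\Delta^{\mathcal W}))$ is a graded Lie subalgebra.

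For (b), this is formal: the graded completion of a graded subspace of a graded Lie algebra is the direct product of its homogeneous pieces, and the grading action, which multiplies each piece by its degree, preserves this product.

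The substantive step is (c). By \eqref{g:pdt:semidirect:0511}, an element of $\mathfrak g^\DR$ has the form $(\nu, x)$ with $\nu \in \mathbb Q$ and $x \in \mathfrak{lie}(e_0, e_1)^\wedge$, and its action on $\hat\Delta^{\mathcal W,\DR}$ splits into the grading action of $\nu$ and the action of $x$. The grading action annihilates $\hat\Delta^{\mathcal W,\DR}$: the analogue of \eqref{act:on:Delta:1309} applied to the grading derivation $E$ of $\hat{\mathcal W}$ evaluates on a homogeneous element $w$ of degree $n$ as $n\,\hat\Delta^{\mathcal W,\DR}(w) - n\,\hat\Delta^{\mathcal W,\DR}(w) = 0$, using that $\hat\Delta^{\mathcal W,\DR}$ preserves degree. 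Hence $(\nu, x) \in \mathfrak{stab}(\hat\Delta^{\mathcal W,\DR})$ iff $x \cdot \hat\Delta^{\mathcal W,\DR} = 0$. Writing $x = \sum_d x_d$ and evaluating on $w \in \mathcal W$ of degree $n$, we have $(x \cdot \hat\Delta^{\mathcal W,\DR})(w) = \sum_d (x_d \cdot \Delta^{\mathcal W})(w)$, where the $d$-th summand has degree $n + d$; these components being linearly independent, the total sum vanishes iff each $x_d \cdot \Delta^{\mathcal W} = 0$, i.e., $x \in \mathfrak{stab}_{\mathfrak{lie}(e_0,e_1)}(\Delta^{\mathcal W})^\wedge$, as desired.

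The main obstacle lies in (c): one must carefully match the action of $\mathfrak g^\DR$ used in \cite{EF2} to define $\mathfrak{stab}(\hat\Delta^{\mathcal W,\DR})$ with the completion of the $\mathcal V_0$-action of Lemma \ref{lem:2810:c}. Once this matching is in place, the result follows from the orthogonality of distinct graded components combined with the triviality of the grading action on the degree-$0$ map $\hat\Delta^{\mathcal W,\DR}$.
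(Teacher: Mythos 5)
Your parts (a) and (b) are correct and essentially identical to the paper's argument: the action of $(\mathcal V_0,\langle,\rangle)$ on $\mathrm{Hom}_{\mathbb Q\operatorname{-vec}}(\mathcal W,\mathcal W^{\otimes2})$ is graded, $\Delta^{\mathcal W}$ is homogeneous of degree $0$, $\theta$ is graded, so the stabilizer is a graded subalgebra and its completion is stable under the grading action.

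The gap is in (c), and it sits exactly where you flag it: the ``matching'' you defer is not a routine verification but the entire content of the statement. Your argument presupposes that $\mathfrak{stab}(\hat\Delta^{\mathcal W,\DR})$ is defined in \cite{EF2} as the stabilizer of $\hat\Delta^{\mathcal W,\DR}$ for an action of $\mathfrak g^\DR$ whose restriction to $\mathfrak{lie}(e_0,e_1)^\wedge$ is the completion of the $\theta$-pulled-back action of Lemma \ref{lem:2810:c}. That is true but not available from the present paper alone: the definition in \cite{EF2}, \S3.5 is phrased in terms of derivations $^\Gamma\mathrm{der}^{\mathcal W,(1)}_{(0,x)}$, and the identification with $\mathrm{der}^{\mathcal W,(1)}_{\theta(0,x)}$ is precisely the first equation of (3.5.4) in \cite{EF2}; without it your formula $(x\cdot\hat\Delta^{\mathcal W,\DR})(w)=\sum_d(x_d\cdot\Delta^{\mathcal W})(w)$ (with the right-hand action taken through $\theta$) is unjustified. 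The paper's proof short-circuits your first step as well: rather than arguing that the grading derivation annihilates the degree-$0$ element $\hat\Delta^{\mathcal W,\DR}$ (a correct observation, but one that again requires knowing the precise form of the $\mathfrak g^\DR$-action), it invokes Lemma 3.12 (d) of \cite{EF2}, which already asserts that $\mathfrak{stab}(\hat\Delta^{\mathcal W,\DR})$ is the semidirect product of $\mathbb Q1$ with the degree completion of an explicitly described Lie subalgebra of $\mathfrak{lie}(e_0,e_1)$; the remaining work is then the $\theta$-twist identification above followed by Lemma \ref{lem:16393110}. So your outline of (c) is compatible with the truth, but as written it assumes the two external facts (the semidirect-product description and the $^\Gamma\mathrm{der}=\mathrm{der}_{\theta(\cdot)}$ identity) that constitute the proof.
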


\begin{proof} (a) follows from the fact that $\mathrm{Hom}_{\mathbb Q\operatorname{-vec}}(\mathcal W,\mathcal W^{\otimes2})$ is 
equipped with a $\mathbb Z$-grading, compatible with the grading of the Lie algebra $\mathfrak{lie}(e_0,e_1)$ and with its action on 
it, and that the element $\Delta^{\mathcal W}\in\mathrm{Hom}_{\mathbb Q\operatorname{-vec}}(\mathcal W,\mathcal W^{\otimes2})$ 
is homogeneous (of degree 0). 
(b) then follows from (a). Lemma 3.12 (d) in \cite{EF2} implies that $\mathfrak{stab}(\hat\Delta^{\mathcal W,\DR})$ is the semidirect 
product of $\mathbb Q1$ with the degree completion of the Lie subalgebra 
$\{x\in\mathfrak{lie}(e_0,e_1) | (^\Gamma\mathrm{der}^{\mathcal W,(1)}_{(0,x)}\otimes\mathrm{id}_{\mathcal W}+\mathrm{id}_{\mathcal W} 
\otimes ^\Gamma\mathrm{der}^{\mathcal W,(1)}_{(0,x)}\otimes\mathrm{id}_{\mathcal W})\circ\Delta^{\mathcal W}=\Delta^{\mathcal W} 
\circ  ^\Gamma\mathrm{der}^{\mathcal W,(1)}_{(0,x)}\otimes\mathrm{id}_{\mathcal W}\}$ 
of $\mathfrak{lie}(e_0,e_1)$. By  the first equation of (3.5.4) in \cite{EF2},
$^\Gamma\mathrm{der}^{\mathcal W,(1)}_{(0,x)}=\mathrm{der}^{\mathcal W,(1)}_{\theta(0,x)}$, 
so the latter Lie algebra is $\{x\in\mathfrak{lie}(e_0,e_1)|(\mathrm{der}^{\mathcal W,(1)}_{\theta(x)}\otimes \mathrm{id}_{\mathcal 
W}+\mathrm{id}_{\mathcal W}\otimes\mathrm{der}^{\mathcal W,(1)}_{\theta(x)})\circ \Delta^{\mathcal W}
-\Delta^{\mathcal W}\circ\mathrm{der}^{\mathcal W,(1)}_{\theta(x)}=0\}$, which by Lemma \ref{lem:16393110}, (a) is equal to 
$\theta^{-1}(\mathfrak{stab}_{\mathcal V_0}(\Delta^{\mathcal W}))$, therefore by Lemma \ref{lem:16393110}, (b) equal to 
$\mathfrak{stab}_{\mathfrak{lie}(e_0,e_1)}(\Delta^{\mathcal W})$. This implies (c). 
\end{proof}

\section{The stabilizer Lie algebra $\mathfrak{stab}(\hat\Delta^{\mathcal M,\DR})$}

This section deals with the Lie algebra  $\mathfrak{stab}(\hat\Delta^{\mathcal M,\DR})$ from \cite{EF2}. In \S\ref{sect:3:1:2701}, we 
recall from \cite{EF1} the $\mathcal W$-module $\mathcal M^\DR$ (here denoted $\mathcal M$) and we construct a Lie algebra action of 
$\mathcal V_0$ on $\mathrm{Hom}_{\mathbb Q\operatorname{-vec}}(\mathcal M,\mathcal M^{\otimes2})$. In \S\ref{sect:3:2:2701}, we recall 
from {\it loc. cit.} the definition of the element $\Delta^{\mathcal M,\DR}$ (henceforth denoted $\Delta^{\mathcal M}$) of this vector space, 
called the module harmonic coproduct. This leads to the construction of the stabilizer Lie algebra 
$\mathfrak{stab}_{\mathfrak{lie}(e_0,e_1)}(\Delta^{\mathcal M})$, a graded Lie subalgebra of $\mathfrak{lie}(e_0,e_1)$. In 
\S\ref{sect:3:3:2701}, we show that the Lie algebra $\mathfrak{stab}(\hat\Delta^{\mathcal M,\DR})$ from \cite{EF2} can be obtained from 
$\mathfrak{stab}_{\mathfrak{lie}(e_0,e_1)}(\Delta^{\mathcal M})$ via degree completion and semidirect product with $\mathbb Q1$ for the grading 
action (Proposition \ref{lem:015:1711}). In \S\ref{sect:3:4:2701}, we recall the relation between 
$\mathfrak{stab}_{\mathfrak{lie}(e_0,e_1)}(\Delta^{\mathcal M})$ and the set of primitive elements of $\mathcal M$ for $\Delta^{\mathcal M}$ 
(Theorem 3.1 in \cite{EF0}).  

\subsection{Lie algebra action of $(\mathcal V_0,\langle,\rangle)$ on $\mathrm{Hom}_{\mathbb Q\operatorname{-vec}}(\mathcal M,
\mathcal M^{\otimes2})$}\label{sect:3:1:2701}

When equipped with the left regular action, $\mathcal V$ may be viewed as a graded module over the graded algebra $\mathcal V$. 
Then $\mathcal Ve_0$ is a graded submodule of this $\mathcal V$-module. We denote by 
$$ 
\mathcal M:=\mathcal V/\mathcal Ve_0 
$$ 
the corresponding quotient graded module. We denote by $1_{\mathcal M}\in\mathcal M$ the image of $1\in\mathcal V$. Then the canonical 
projection $\mathcal V\to\mathcal M$ is the map $v\mapsto v\cdot 1_{\mathcal M}$, which we denote by $-\cdot 1_{\mathcal M}$ 
(the module $\mathcal M$ and its element $1_{\mathcal M}$ are denoted $\mathcal M^\DR$, $1_\DR$ in \cite{EF1}, §1.1). 

For $v\in\mathcal V_0$, define $\mathrm{der}_v^{\mathcal V,(10)}$ to be the $\mathbb Q$-vector space endomorphism of $\mathcal V$ such that 
$$
\forall a\in\mathcal V,\quad \mathrm{der}_v^{\mathcal V,(10)}(a)=\mathrm{der}_v^{\mathcal V,(1)}(a)+av.
$$
For any $v\in\mathcal V_0$, $\mathrm{der}_v^{\mathcal V,(10)}$ preserves the subspace $\mathcal Ve_0$, therefore induces an endomorphism
$\mathrm{der}_v^{\mathcal M,(10)}$ of the  $\mathbb Q$-vector space $\mathcal M$. 

\begin{lem}\label{lem:0311:a}
The maps $(\mathcal V_0,\langle,\rangle)\to\mathrm{End}_{\mathbb Q\operatorname{-vec}}(\mathcal V)$, $v\mapsto \mathrm{der}_v^{\mathcal V,(10)}$ and 
$(\mathcal V_0,\langle,\rangle)\to\mathrm{End}_{\mathbb Q\operatorname{-vec}}(\mathcal M)$, $v\mapsto \mathrm{der}_v^{\mathcal M,(10)}$
are Lie algebra morphisms. 
\end{lem}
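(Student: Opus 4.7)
The plan is to proceed in close analogy with the proof of Lemma \ref{lem:2810:a}(b), first handling the $\mathcal V$-statement and then deriving the $\mathcal M$-statement by passage to the quotient.

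For the $\mathcal V$-statement, the shortest route is to invoke the Lie algebra morphism $\mathfrak{em}^\DR\to\mathrm{Der}(\hat{\mathcal V}_{\mathbb Q},\hat{\mathcal V}_{\mathbb Q})$, $(\nu,x)\mapsto(\mathrm{der}_{(\nu,x)}^{\mathcal V,(1),\DR},\mathrm{der}_{(\nu,x)}^{\mathcal V,(10),\DR})$ from \cite{EF2}, Lemma 3.9 (c), and to post-compose with the projection onto the \emph{second} factor of $\mathrm{Der}(\hat{\mathcal V}_{\mathbb Q})\times\mathrm{End}_{\mathbb Q\operatorname{-vec}}(\hat{\mathcal V}_{\mathbb Q})$, using \cite{EF2}, Lemma 3.9 (a); this is the only difference with the argument of Lemma \ref{lem:2810:a}(b), which post-composed with the first projection. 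Pre-composing with the inclusion $\mathcal V_0\hookrightarrow\mathfrak{em}^\DR$, $v\mapsto(0,v)$, and observing that $\mathrm{der}_v^{\mathcal V,(10)}$ preserves the non-completed $\mathcal V$ (the summand $\mathrm{der}_v^{\mathcal V,(1)}$ does so by Lemma \ref{lem:2810:a}(b), and right multiplication by $v$ obviously does), one obtains the first Lie algebra morphism.

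Alternatively, a self-contained verification is obtained by writing $\mathrm{der}_v^{\mathcal V,(10)}=\mathrm{der}_v^{\mathcal V,(1)}+R_v$, where $R_v$ denotes right multiplication by $v$, and expanding
\[
[\mathrm{der}_v^{\mathcal V,(10)},\mathrm{der}_{v'}^{\mathcal V,(10)}]
=[\mathrm{der}_v^{\mathcal V,(1)},\mathrm{der}_{v'}^{\mathcal V,(1)}]+[\mathrm{der}_v^{\mathcal V,(1)},R_{v'}]+[R_v,\mathrm{der}_{v'}^{\mathcal V,(1)}]+[R_v,R_{v'}].
\]
The Leibniz rule gives $[\mathrm{der}_v^{\mathcal V,(1)},R_{v'}]=R_{\mathrm{der}_v^{\mathcal V,(1)}(v')}$ and $[R_v,\mathrm{der}_{v'}^{\mathcal V,(1)}]=-R_{\mathrm{der}_{v'}^{\mathcal V,(1)}(v)}$, while $[R_v,R_{v'}]=R_{[v',v]}$ by a direct computation. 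Combining these with Lemma \ref{lem:2810:a}(b) and the definition of $\langle v,v'\rangle$, the right-hand side collapses to $\mathrm{der}_{\langle v,v'\rangle}^{\mathcal V,(1)}+R_{\langle v,v'\rangle}=\mathrm{der}_{\langle v,v'\rangle}^{\mathcal V,(10)}$, as required.

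For the $\mathcal M$-statement, the argument is formal: since $\mathrm{der}_v^{\mathcal V,(10)}$ preserves $\mathcal Ve_0$ (recorded in the paragraph preceding the lemma), the subspace of endomorphisms of $\mathcal V$ stabilising $\mathcal Ve_0$ is a Lie subalgebra of $\mathrm{End}_{\mathbb Q\operatorname{-vec}}(\mathcal V)$, and passage to the quotient gives a Lie algebra morphism to $\mathrm{End}_{\mathbb Q\operatorname{-vec}}(\mathcal M)$; composing with the first map of the lemma yields the second. The only substantive step is the bracket identity sketched above; no genuine obstacle arises.
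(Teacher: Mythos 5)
Your proposal is correct, and it contains both the paper's route and a genuinely more self-contained alternative. Your first argument for the $\mathcal V$-statement is essentially the paper's proof: invoke \cite{EF2}, Lemma 3.9 (c), project onto the second factor of $\mathrm{Der}(\hat{\mathcal V}_{\mathbb Q})\times\mathrm{End}_{\mathbb Q\operatorname{-vec}}(\hat{\mathcal V}_{\mathbb Q})$, pre-compose with $v\mapsto(0,v)$, and check that everything restricts to the non-completed graded object. Where you diverge is instructive in two places. First, your direct bracket computation $[\mathrm{der}_v^{\mathcal V,(10)},\mathrm{der}_{v'}^{\mathcal V,(10)}]=\mathrm{der}_{\langle v,v'\rangle}^{\mathcal V,(10)}$ via the decomposition $\mathrm{der}_v^{\mathcal V,(10)}=\mathrm{der}_v^{\mathcal V,(1)}+R_v$ is correct (the three commutator identities $[\mathrm{der}_v^{\mathcal V,(1)},R_{v'}]=R_{\mathrm{der}_v^{\mathcal V,(1)}(v')}$, $[R_v,\mathrm{der}_{v'}^{\mathcal V,(1)}]=-R_{\mathrm{der}_{v'}^{\mathcal V,(1)}(v)}$, $[R_v,R_{v'}]=R_{[v',v]}$ all check out, and their sum reproduces exactly the defining formula for $\langle v,v'\rangle$); this makes the lemma independent of the external reference and explains \emph{why} the bracket $\langle,\rangle$ has the form it does. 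Second, for the $\mathcal M$-statement the paper does not pass to the quotient: it re-runs the completed-space argument using \cite{EF2}, Lemma 3.10 (b) in place of Lemma 3.9 (c). Your quotient argument --- endomorphisms preserving $\mathcal Ve_0$ form a Lie subalgebra of $\mathrm{End}_{\mathbb Q\operatorname{-vec}}(\mathcal V)$, and reduction modulo $\mathcal Ve_0$ is a Lie algebra morphism to $\mathrm{End}_{\mathbb Q\operatorname{-vec}}(\mathcal M)$ --- is formally valid, since $\mathrm{der}_v^{\mathcal M,(10)}$ is by definition the induced endomorphism; it buys you one fewer citation at the cost of nothing. Both of your variants are sound.
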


\begin{proof} By \cite{EF2}, Lemma 3.9 (c), the map $\mathfrak{em}^\DR\to\mathrm{Der}(\hat{\mathcal V}^\DR_{\mathbb Q},
\hat{\mathcal V}^\DR_{\mathbb Q})$, 
$(\nu,x)\mapsto(\mathrm{der}_{(\nu,v)}^{\mathcal V,\DR,(1)},\mathrm{der}_{(\nu,v)}^{\mathcal V,\DR,(10)})$ is a Lie algebra morphism. 
Precomposing it with the Lie algebra injection $(\mathcal V_0,\langle,\rangle)\subset\mathfrak{em}^\DR$, 
$v\mapsto(0,v)$ and post-composing it with the sequence of Lie algebra morphisms 
$\mathrm{Der}(\hat{\mathcal V}^\DR_{\mathbb Q},\hat{\mathcal V}^\DR_{\mathbb Q})\subset
\mathrm{Der}(\hat{\mathcal V}^\DR_{\mathbb Q})\times\mathrm{End}_{\mathbb Q\operatorname{-vec}}(\hat{\mathcal V}^\DR_{\mathbb Q})
\to\mathrm{End}_{\mathbb Q\operatorname{-vec}}(\hat{\mathcal V}^\DR_{\mathbb Q})$, where the second map is the projection on the 
second factor, one sees that the map $(\mathcal V_0,\langle,\rangle)\to\mathrm{End}_{\mathbb Q\operatorname{-vec}}(\hat{\mathcal V}^\DR_{\mathbb Q})$, 
$v\mapsto\mathrm{der}_{(0,v)}^{\mathcal V,\DR,(10)}$ is a Lie algebra morphism. There is a diagram of Lie algebras
$\mathrm{End}_{\mathbb Q\operatorname{-vec}}(\hat{\mathcal V}^\DR_{\mathbb Q})\supset\oplus_{n\in\mathbb Z}\mathrm{End}(\mathcal V)[n]
\subset\mathrm{End}(\mathcal V)$, where $-[n]$ denotes the part of degree $n$. One checks that  
$\mathrm{der}_{(0,v)}^{\mathcal V,\DR,(10)}$ belongs to the Lie subalgebra $\oplus_{n\in\mathbb Z}\mathrm{End}(\mathcal V)[n]$, 
and that the composition of the resulting morphism $(\mathcal V,\langle,\rangle)\to\oplus_{n\in\mathbb Z}\mathrm{End}(\mathcal V)[n]$
with the inclusion $\oplus_{n\in\mathbb Z}\mathrm{End}(\mathcal V)[n]\subset\mathrm{End}(\mathcal V)$ is the map $v\mapsto\mathrm{der}_v^{\mathcal 
V,(10)}$, which is therefore a Lie algebra morphism. The same argument applies with $\mathcal M$ replacing $\mathcal V$, using \cite{EF2}, 
Lemma 3.10, (b) instead of \cite{EF2}, Lemma 3.9 (c). 
\end{proof}

\begin{lem}\label{lem:0311}
The $\mathbb Q$-vector space $\mathrm{Hom}_{\mathbb Q\operatorname{-vec}}(\mathcal M,\mathcal M^{\otimes2})$ is equipped with a  
$(\mathfrak{lie}(e_0,e_1),\langle,\rangle)$-module structure, 
the action of $v\in\mathfrak{lie}(e_0,e_1)$ on $h\in\mathrm{Hom}_{\mathbb Q\operatorname{-vec}}(\mathcal M,\mathcal M^{\otimes2})$ being given by 
\begin{equation}\label{act:on:Delta:0311}
v*h:=(\mathrm{der}^{\mathcal M,(10)}_{\theta(v)}\otimes\mathrm{id}+\mathrm{id}\otimes \mathrm{der}^{\mathcal M,(10)}_{\theta(v)})\circ h
-h\circ\mathrm{der}^{\mathcal M,(10)}_{\theta(v)}. 
\end{equation} 
\end{lem}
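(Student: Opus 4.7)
The proof should mirror the structure of Lemma \ref{lem:2810:c}, with three small adjustments: (i) $\mathcal W$ is replaced by the $\mathcal V$-module $\mathcal M$; (ii) since $\mathcal M$ is a module rather than an algebra, the intermediate step via $\mathrm{Der}$ is replaced by the fact that $v \mapsto \mathrm{der}_v^{\mathcal M,(10)}$ lands in $\mathrm{End}_{\mathbb Q\operatorname{-vec}}(\mathcal M)$; (iii) a pre-composition with $\theta$ is inserted at the end to pass from $\mathcal V_0$ to $\mathfrak{lie}(e_0,e_1)$, which accounts for the occurrence of $\theta(v)$ in the stated formula \eqref{act:on:Delta:0311}.

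Concretely, the plan is to construct the module structure as a composition of three Lie algebra morphisms. First, apply Lemma \ref{lem:0311:a} to obtain the Lie algebra morphism
$$(\mathcal V_0,\langle,\rangle)\to\mathrm{End}_{\mathbb Q\operatorname{-vec}}(\mathcal M),\quad v\mapsto\mathrm{der}_v^{\mathcal M,(10)}.$$
Second, compose with the general Lie algebra morphism
$$\mathrm{End}_{\mathbb Q\operatorname{-vec}}(\mathcal M)\to\mathrm{End}_{\mathbb Q\operatorname{-vec}}\bigl(\mathrm{Hom}_{\mathbb Q\operatorname{-vec}}(\mathcal M,\mathcal M^{\otimes2})\bigr),\quad f\mapsto\bigl(h\mapsto(f\otimes\mathrm{id}_{\mathcal M}+\mathrm{id}_{\mathcal M}\otimes f)\circ h-h\circ f\bigr),$$
which is the same general construction used in the proof of Lemma \ref{lem:2810:c}. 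The resulting composite is a Lie algebra morphism, i.e.\ a $(\mathcal V_0,\langle,\rangle)$-module structure on $\mathrm{Hom}_{\mathbb Q\operatorname{-vec}}(\mathcal M,\mathcal M^{\otimes2})$. Third, pre-compose with the Lie algebra morphism $\theta:(\mathfrak{lie}(e_0,e_1),\langle,\rangle)\to(\mathcal V_0,\langle,\rangle)$ of Lemma \ref{lemma:theta:051121}, yielding by pull-back the announced $(\mathfrak{lie}(e_0,e_1),\langle,\rangle)$-module structure.

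Unravelling the three compositions gives exactly formula \eqref{act:on:Delta:0311}. The only non-routine point to check is that the intermediate map in the second step is indeed a Lie algebra morphism, which is the standard fact that when $V$ is a module over a Lie algebra, $V^{\otimes 2}$ inherits a module structure by the Leibniz rule and $\mathrm{Hom}(V,V^{\otimes 2})$ inherits the resulting commutator-type action; this is exactly the fact already used implicitly in Lemma \ref{lem:2810:c}, so no new obstacle arises.
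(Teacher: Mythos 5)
Your proposal is correct and coincides with the paper's own proof: both construct the module structure as the composition of the Lie algebra morphism $\theta$ from Lemma \ref{lemma:theta:051121}, the morphism $v\mapsto\mathrm{der}_v^{\mathcal M,(10)}$ from Lemma \ref{lem:0311:a}, and the standard morphism $f\mapsto\bigl(h\mapsto(f\otimes\mathrm{id}+\mathrm{id}\otimes f)\circ h-h\circ f\bigr)$, exactly as in Lemma \ref{lem:2810:c}. No differences worth noting.
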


\begin{proof} Composing the Lie algebra morphisms $\theta$ from Lemma \ref{lemma:theta:051121},  
$(\mathcal V_0,\langle,\rangle)\to\mathrm{End}_{\mathbb Q\operatorname{-vec}}(\mathcal M)$
from Lemma \ref{lem:0311:a} and  
$\mathrm{End}_{\mathbb Q\operatorname{-vec}}(\mathcal M)\to\mathrm{End}_{\mathbb Q\operatorname{-vec}}(\mathrm{Hom}_{\mathbb Q\operatorname{-vec}}(\mathcal M,\mathcal M^{\otimes2}))$
given by $f\mapsto (h\mapsto (f\otimes \mathrm{id}_{\mathcal M}+\mathrm{id}_{\mathcal M}\otimes f)\circ h-h\circ f)$, 
one obtains a Lie algebra morphism $(\mathfrak{lie}(e_0,e_1),\langle,\rangle)\to\mathrm{End}_{\mathbb Q\operatorname{-vec}}(\mathrm{Hom}_{\mathbb Q\operatorname{-vec}}(\mathcal M,
\mathcal M^{\otimes2}))$, i.e. a $(\mathfrak{lie}(e_0,e_1),\langle,\rangle)$-module structure over $\mathrm{Hom}_{\mathbb Q\operatorname{-vec}}(\mathcal M,
\mathcal M^{\otimes2})$, which is given by the announced formula.  
\end{proof}

\subsection{The stabilizer Lie algebra $\mathfrak{stab}_{\mathfrak{lie}(e_0,e_1)}(\Delta^{\mathcal M})$}\label{sect:3:2:2701}

The $\mathbb Q$-vector space $\mathcal M$ is a a free $\mathcal W$-module of rank 1 generated by $1_{\mathcal M}$
(see \cite{EF1}, \S1.1). 
It follows that there is an element $\Delta^{\mathcal M}$ in $\mathrm{Hom}_{\mathbb Q\operatorname{-vec}}(\mathcal M,\mathcal M^{\otimes2})$, uniquely 
determined by 
\begin{equation}\label{rel:Delta:M:Delta:W}
\forall w\in\mathcal W, \quad 
\Delta^{\mathcal M}(w\cdot 1_{\mathcal M})=\Delta^{\mathcal W}(w)\cdot 1_{\mathcal M}^{\otimes2}
\end{equation}
(see \cite{EF1}, \S1.2, where this element is denoted $\Delta^{\mathcal M,\DR}$). 

\begin{lem}\label{lem:stab:05112021}
(a) The stabilizer Lie algebra of $\Delta^{\mathcal M}\in\mathrm{Hom}_{\mathbb Q\operatorname{-vec}}(\mathcal M,\mathcal M^{\otimes2})$
for the action \eqref{act:on:Delta:0311} is 
$$
\mathfrak{stab}_{\mathfrak{lie}(e_0,e_1)}(\Delta^{\mathcal M})=\{v\in\mathfrak{lie}(e_0,e_1)|v*\Delta^{\mathcal M}=0\}.   
$$

(b) $\mathfrak{stab}_{\mathfrak{lie}(e_0,e_1)}(\Delta^{\mathcal M})$ is a graded Lie subalgebra of 
$(\mathfrak{lie}(e_0,e_1),\langle,\rangle)$.
\end{lem}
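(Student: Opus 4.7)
The approach for part (a) is to observe that it is simply the specialization of the definition of a stabilizer Lie subalgebra to the $(\mathfrak{lie}(e_0,e_1),\langle,\rangle)$-module structure on $\mathrm{Hom}_{\mathbb Q\operatorname{-vec}}(\mathcal M,\mathcal M^{\otimes2})$ constructed in Lemma \ref{lem:0311}, applied to the element $\Delta^{\mathcal M}$. No further argument is needed, and this exactly parallels Lemma \ref{lem:inclusion:3110}(a).

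For part (b), the plan is to verify that $v*\Delta^{\mathcal M}$ is homogeneous of degree equal to that of $v$ whenever $v\in\mathfrak{lie}(e_0,e_1)$ is homogeneous, from which the graded nature of the stabilizer follows at once. To this end I would check the following. First, $\mathcal M=\mathcal V/\mathcal Ve_0$ inherits a $\mathbb Z_{\geq 0}$-grading from $\mathcal V$ (since $\mathcal Ve_0$ is a graded submodule), whence so does $\mathcal M^{\otimes 2}$; moreover by \eqref{rel:Delta:M:Delta:W} and the fact that $\Delta^{\mathcal W}$ is homogeneous of degree $0$, the map $\Delta^{\mathcal M}$ sends $\mathcal M[d]$ into $\mathcal M^{\otimes 2}[d]$ for all $d$. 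Second, for homogeneous $v\in\mathfrak{lie}(e_0,e_1)$ of degree $n$, the element $\theta(v)\in\mathcal V_0$ is homogeneous of degree $n$ (by direct inspection of the formula defining $\theta$ in \S\ref{sect:def:der:V:1}), and the operators $\mathrm{der}^{\mathcal V,(1)}_{\theta(v)}$ and right multiplication by $\theta(v)$ on $\mathcal V$ both raise degree by $n$, so their sum $\mathrm{der}^{\mathcal V,(10)}_{\theta(v)}$ does too and descends to an endomorphism of $\mathcal M$ of degree $n$.

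Combining these, formula \eqref{act:on:Delta:0311} shows that $v*\Delta^{\mathcal M}$ sends $\mathcal M[d]$ into $\mathcal M^{\otimes 2}[d+n]$ for every $d$. Hence for a general $v=\sum_{k\geq 1}v_k\in\mathfrak{lie}(e_0,e_1)$ (finite sum, with $v_k$ the degree-$k$ component), the maps $v_k*\Delta^{\mathcal M}$ have pairwise disjoint degree support in $\mathrm{Hom}_{\mathbb Q\operatorname{-vec}}(\mathcal M,\mathcal M^{\otimes 2})$, so $v*\Delta^{\mathcal M}=0$ if and only if $v_k*\Delta^{\mathcal M}=0$ for each $k$. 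This proves that $\mathfrak{stab}_{\mathfrak{lie}(e_0,e_1)}(\Delta^{\mathcal M})$ decomposes as the direct sum of its intersections with the graded pieces of $\mathfrak{lie}(e_0,e_1)$, yielding (b). There is no real obstacle; the only point requiring a moment's thought is checking that $\theta$ preserves degree, and the entire argument runs in parallel with the analogous statement on the algebra side in Proposition \ref{prop:main:sect:2}(a).
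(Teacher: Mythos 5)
Your proposal is correct and follows essentially the same route as the paper: part (a) is the specialization of the definition of a stabilizer Lie subalgebra, and part (b) follows from the gradedness of the $\mathfrak{lie}(e_0,e_1)$-module $\mathrm{Hom}_{\mathbb Q\operatorname{-vec}}(\mathcal M,\mathcal M^{\otimes2})$ together with the homogeneity (degree $0$) of $\Delta^{\mathcal M}$. Your write-up merely makes explicit the verifications (that $\theta$ preserves degree, that $\mathrm{der}^{\mathcal M,(10)}_{\theta(v)}$ raises degree by $\deg v$, and the disjoint-support argument for a general $v$) which the paper leaves implicit.
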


\begin{proof}
(a) follows from definitions. (b) follows from the facts that $\mathcal M$, and therefore 
$\mathrm{Hom}_{\mathbb Q\operatorname{-vec}}(\mathcal M,\mathcal M^{\otimes2})$ is a graded module over  
$(\mathfrak{lie}(e_0,e_1),\langle,\rangle)$, and that the element $\Delta^{\mathcal M}$ of the latter
module is homogeneous (of degree 0). 
\end{proof}

\begin{rem}
As explained in the proof of Lemma \ref{lem:0311}, the map $(v,h)\mapsto 
(\mathrm{der}^{\mathcal M,(10)}_{v}\otimes\mathrm{id}+\mathrm{id}\otimes \mathrm{der}^{\mathcal M,(10)}_{v})\circ h
-h\circ\mathrm{der}^{\mathcal M,(10)}_{v}$ defines an action of the Lie algebra $(\mathcal V_0,\langle,\rangle)$
on the vector space $\mathrm{Hom}_{\mathbb Q\operatorname{-vec}}(\mathcal M,\mathcal M^{\otimes2})$ so that as in Lemma 
\ref{lem:16393110}, one may define the stabilizer Lie subalgebra $\mathfrak{stab}_{\mathcal V_0}(\Delta^{\mathcal M})$ of 
$(\mathcal V_0,\langle,\rangle)$ and show that 
$\mathfrak{stab}_{\mathfrak{lie}(e_0,e_1)}(\Delta^{\mathcal M})$ is its preimage under $\theta$. Oppositely to 
$\mathfrak{stab}_{\mathcal V_0}(\Delta^{\mathcal W})$, the Lie algebra $\mathfrak{stab}_{\mathcal V_0}(\Delta^{\mathcal M})$
does not play a role in the sequel of the paper.     
\end{rem}

\subsection{Relation with the Lie algebra $\mathfrak{stab}(\hat\Delta^{\mathcal M,\DR})$ from \cite{EF2}}\label{sect:3:3:2701}

Recall that $(\mathfrak{lie}(e_0,e_1)^\wedge,\langle,\rangle)$ denotes the degree completion of the $\mathbb Z_+$-graded Lie algebra 
$(\mathfrak{lie}(e_0,e_1),\langle,\rangle)$. Denote by $\mathfrak{stab}_{\mathfrak{lie}(e_0,e_1)}(\Delta^{\mathcal M})^\wedge$
the degree completion of its $\mathbb Z_+$-graded Lie subalgebra 
$\mathfrak{stab}_{\mathfrak{lie}(e_0,e_1)}(\Delta^{\mathcal M})$ (see Lemma \ref{lem:stab:05112021} (b)). 

Recall that $(\mathfrak{lie}(e_0,e_1)^\wedge,\langle,\rangle)$ is equipped with the grading action of the abelian Lie algebra $\mathbb Q1$, and 
the identification of the Lie algebra $\mathfrak g^\DR$ from \cite{EF2}, \S3.5 with the corresponding semidirect
product (see \eqref{g:pdt:semidirect:0511}). The following statement is an analogue of Proposition \ref{prop:main:sect:2}.

\begin{prop}\label{lem:015:1711}
(a) The Lie subalgebra $\mathfrak{stab}_{\mathfrak{lie}(e_0,e_1)}(\Delta^{\mathcal M})^\wedge$  of $(\mathfrak{lie}(e_0,e_1)^\wedge,
\langle,\rangle)$ is stable under the action of the abelian Lie algebra $\mathbb Q1$. 

(b) The Lie algebra isomorphism \eqref{g:pdt:semidirect:0511} restricts to an isomorphism of the Lie subalgebra 
$\mathfrak{stab}(\hat\Delta^{\mathcal M,\DR})$ of $\mathfrak g^\DR$ from \cite{EF2}, §3.5 with the 
corresponding semidirect product, i.e. 
$$
\mathfrak{stab}(\hat\Delta^{\mathcal M,\DR})=\mathbb Q1\ltimes \mathfrak{stab}_{\mathfrak{lie}(e_0,e_1)}(\Delta^{\mathcal M})^\wedge.
$$
\end{prop}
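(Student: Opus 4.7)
The plan is to mirror the proof of Proposition \ref{prop:main:sect:2}, substituting $\mathcal M$ for $\mathcal W$ and using the module-derivation $\mathrm{der}^{\mathcal M,(10)}$ in place of the algebra-derivation $\mathrm{der}^{\mathcal W,(1)}$; most ingredients are already in place, so the work is essentially a chase through definitions.

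For part (a), the claim is immediate from Lemma \ref{lem:stab:05112021} (b). Indeed, that lemma states that $\mathfrak{stab}_{\mathfrak{lie}(e_0,e_1)}(\Delta^{\mathcal M})$ is a graded Lie subalgebra of $(\mathfrak{lie}(e_0,e_1),\langle,\rangle)$. The grading action of $\mathbb Q 1$ is the derivation sending a homogeneous element of degree $n$ to $n$ times itself, so any graded subspace (and its degree completion) is preserved by it. Hence $\mathfrak{stab}_{\mathfrak{lie}(e_0,e_1)}(\Delta^{\mathcal M})^\wedge$ is stable under $\mathbb Q 1$.

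For part (b), the plan is to identify the Lie subalgebra of $\mathfrak{lie}(e_0,e_1)$ whose semidirect product with $\mathbb Q 1$ yields $\mathfrak{stab}(\hat\Delta^{\mathcal M,\DR})$ in the sense of \cite{EF2}, and to verify that it equals $\mathfrak{stab}_{\mathfrak{lie}(e_0,e_1)}(\Delta^{\mathcal M})$. The analogue of \cite{EF2}, Lemma 3.12 (d) applied to $\Delta^{\mathcal M,\DR}$ expresses $\mathfrak{stab}(\hat\Delta^{\mathcal M,\DR})$ as the semidirect product of $\mathbb Q 1$ with the degree completion of
\[
\{x\in\mathfrak{lie}(e_0,e_1)\mid ({^\Gamma\mathrm{der}}^{\mathcal M,(10)}_{(0,x)}\otimes\mathrm{id}_{\mathcal M}+\mathrm{id}_{\mathcal M}\otimes {^\Gamma\mathrm{der}}^{\mathcal M,(10)}_{(0,x)})\circ \Delta^{\mathcal M} = \Delta^{\mathcal M}\circ {^\Gamma\mathrm{der}}^{\mathcal M,(10)}_{(0,x)}\}.
\]
Invoking the second equation of (3.5.4) in \cite{EF2}, namely ${^\Gamma\mathrm{der}}^{\mathcal M,(10)}_{(0,x)}=\mathrm{der}^{\mathcal M,(10)}_{\theta(x)}$, rewrites this condition as
\[
(\mathrm{der}^{\mathcal M,(10)}_{\theta(x)}\otimes\mathrm{id}_{\mathcal M}+\mathrm{id}_{\mathcal M}\otimes \mathrm{der}^{\mathcal M,(10)}_{\theta(x)})\circ \Delta^{\mathcal M} - \Delta^{\mathcal M}\circ \mathrm{der}^{\mathcal M,(10)}_{\theta(x)}=0,
\]
which by definition \eqref{act:on:Delta:0311} of the action in Lemma \ref{lem:0311} is exactly $x*\Delta^{\mathcal M}=0$. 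By Lemma \ref{lem:stab:05112021} (a), this set is $\mathfrak{stab}_{\mathfrak{lie}(e_0,e_1)}(\Delta^{\mathcal M})$, which combined with part (a) yields the desired identification.

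The only nontrivial step is the second one, and the main obstacle is purely bookkeeping: one must cleanly track the distinction between $\mathrm{der}^{(1)}$ (algebra derivation on $\mathcal W$) and $\mathrm{der}^{(10)}$ (linear endomorphism on $\mathcal M$ incorporating right multiplication), and verify that the module analogue of \cite{EF2}, Lemma 3.12 is indeed available in the form used above. Assuming this, no new ideas beyond those already developed for Proposition \ref{prop:main:sect:2} are required.
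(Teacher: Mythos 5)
Your proposal is correct and follows essentially the same route as the paper's own proof: part (a) from the gradedness of the stabilizer (Lemma \ref{lem:stab:05112021} (b)), and part (b) by invoking the module analogue of \cite{EF2}, Lemma 3.12 (the paper cites item (e) rather than (d)), the second equation of (3.5.4) in \cite{EF2}, and the identification of the resulting condition with $x*\Delta^{\mathcal M}=0$ via Lemma \ref{lem:0311} and Lemma \ref{lem:stab:05112021} (a). No substantive differences.
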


\begin{proof} (a) follows from the fact that  $\mathfrak{stab}_{\mathfrak{lie}(e_0,e_1)}(\Delta^{\mathcal M})^\wedge$  is graded complete. 
Lemma 3.12 (e) in \cite{EF2} implies that $\mathfrak{stab}(\hat\Delta^{\mathcal M,\DR})$ is the semidirect 
product of $\mathbb Q1$ with the degree completion of the Lie subalgebra 
$\{x\in\mathfrak{lie}(e_0,e_1) | (^\Gamma\mathrm{der}^{\mathcal M,(10)}_{(0,x)}\otimes\mathrm{id}_{\mathcal M}+\mathrm{id}_{\mathcal M} 
\otimes\ ^\Gamma\!\mathrm{der}^{\mathcal M,(10)}_{(0,x)}\otimes\mathrm{id}_{\mathcal M})\circ\Delta^{\mathcal M}=\Delta^{\mathcal M} \circ  
\ ^\Gamma\!\mathrm{der}^{\mathcal M,(10)}_{(0,x)}\otimes\mathrm{id}_{\mathcal M}\}$ 
of $\mathfrak{lie}(e_0,e_1)$. By  the second equation of (3.5.4) in \cite{EF2},
$^\Gamma\mathrm{der}^{\mathcal M,(10)}_{(0,x)}=\mathrm{der}^{\mathcal M,(10)}_{\theta(0,x)}$, which combined with Lemma \ref{lem:0311}
implies that this Lie algebra is equal to $\mathfrak{stab}_{\mathfrak{lie}(e_0,e_1)}(\Delta^{\mathcal M})^\wedge$. This implies (b). 
\end{proof}

\subsection{Relation of $\mathfrak{stab}_{\mathfrak{lie}(e_0,e_1)}(\Delta^{\mathcal M})$ with $\mathcal P(\mathcal M)$ (Theorem 3.10 from 
\cite{EF0})}\label{sect:3:4:2701}

\begin{lem}
The set $\mathcal P(\mathcal M):=\{m\in\mathcal M|\Delta^{\mathcal M}(m)=m\otimes 1_{\mathcal M}+1_{\mathcal M}\otimes m\}$
is a graded subspace of $\mathcal M$.  
\end{lem}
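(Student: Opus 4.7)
The plan is to reduce the claim to the observation that the defining equation of $\mathcal P(\mathcal M)$ is homogeneous with respect to the $\mathbb Z_{\geq 0}$-grading of $\mathcal M$. That $\mathcal P(\mathcal M)$ is a vector subspace of $\mathcal M$ is immediate from $\mathbb Q$-linearity of $\Delta^{\mathcal M}$ and of the map $m\mapsto m\otimes 1_{\mathcal M}+1_{\mathcal M}\otimes m$; the content of the lemma is therefore the stability of $\mathcal P(\mathcal M)$ under the projections $m\mapsto m[d]$ on homogeneous components.

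First I would record the grading data on the relevant objects. Since $\mathcal V$ is a $\mathbb Z_{\geq 0}$-graded algebra in which $e_0$ has degree $1$, the left ideal $\mathcal V e_0$ is a graded submodule, so the quotient $\mathcal M=\mathcal V/\mathcal V e_0$ inherits a $\mathbb Z_{\geq 0}$-grading and the element $1_{\mathcal M}\in\mathcal M[0]$ is homogeneous of degree $0$; the tensor product $\mathcal M^{\otimes 2}$ is then graded by total degree. Next I would check that $\Delta^{\mathcal W}\in\mathrm{Hom}_{\mathbb Q\operatorname{-vec}}(\mathcal W,\mathcal W^{\otimes 2})$ is homogeneous of degree $0$: the generators $\tilde y_n=e_0^{n-1}e_1$ have degree $n$, and the defining identity $\Delta^{\mathcal W}(\tilde y_n)=-\sum_{i=0}^n\tilde y_i\otimes\tilde y_{n-i}$ produces an element of total degree $n$ (with the convention $\tilde y_0=-1$ of degree $0$); since $\Delta^{\mathcal W}$ is an algebra morphism and $\mathcal W$ is freely generated by the $\tilde y_n$ as an algebra, this extends to all of $\mathcal W$.

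From the relation $\Delta^{\mathcal M}(w\cdot 1_{\mathcal M})=\Delta^{\mathcal W}(w)\cdot 1_{\mathcal M}^{\otimes 2}$ and the fact that the isomorphism $\mathcal W\xrightarrow{\sim}\mathcal M$, $w\mapsto w\cdot 1_{\mathcal M}$ is a graded isomorphism of degree $0$, I deduce that $\Delta^{\mathcal M}:\mathcal M\to\mathcal M^{\otimes 2}$ is homogeneous of degree $0$. Combined with the fact that $1_{\mathcal M}$ has degree $0$, the two sides of the equation $\Delta^{\mathcal M}(m)=m\otimes 1_{\mathcal M}+1_{\mathcal M}\otimes m$ are then graded maps in $m$ of the same degree, i.e., both sides, when evaluated on an element $m$ of degree $d$, lie in $\mathcal M^{\otimes 2}[d]$.

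Finally, for $m=\sum_d m[d]\in\mathcal P(\mathcal M)$ the identity $\Delta^{\mathcal M}(m)-m\otimes 1_{\mathcal M}-1_{\mathcal M}\otimes m=0$ splits by homogeneity into its components of each degree $d$, yielding $\Delta^{\mathcal M}(m[d])-m[d]\otimes 1_{\mathcal M}-1_{\mathcal M}\otimes m[d]=0$, so $m[d]\in\mathcal P(\mathcal M)$ for every $d$. This proves $\mathcal P(\mathcal M)=\oplus_{d\geq 0}(\mathcal P(\mathcal M)\cap\mathcal M[d])$, which is exactly the statement that $\mathcal P(\mathcal M)$ is a graded subspace of $\mathcal M$. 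The only non-mechanical step is the verification that $\Delta^{\mathcal M}$ is homogeneous of degree $0$, but this is an easy consequence of the corresponding property of $\Delta^{\mathcal W}$ together with the degree-$0$ isomorphism $\mathcal W\simeq\mathcal M$ provided by the free module structure of rank $1$.
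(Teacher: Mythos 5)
Your proof is correct and follows essentially the same route as the paper, which simply observes that the claim ``follows from the fact that $\Delta^{\mathcal M}$ is graded''; you have merely spelled out the verification that $\Delta^{\mathcal M}$ is homogeneous of degree $0$ (via $\Delta^{\mathcal W}$ and the graded isomorphism $\mathcal W\simeq\mathcal M$) and the standard consequence that the kernel of a graded map is a graded subspace.
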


\begin{proof} Follows from the fact that $\Delta^{\mathcal M}$ is graded. 
\end{proof}

Then $\mathcal P(\mathcal M)$ is the set of primitive elements of $(\mathcal M,\Delta^{\mathcal M})$, which is a cocommutative coalgebra. One 
has 
\begin{equation}\label{identif:PM:10112021}
\mathcal P(\mathcal M)=\mathrm{Ker}(\Delta^{\mathcal M}-(\mathrm{id}\otimes 1_{\mathcal M}+1_{\mathcal M}\otimes\mathrm{id}) : \mathcal 
M\to\mathcal M^{\otimes2}), 
\end{equation} where the map $\mathrm{id}\otimes 1_{\mathcal M}+1_{\mathcal M}\otimes\mathrm{id} : \mathcal M\to\mathcal M^{\otimes2}$ is 
defined by $m\mapsto m\otimes1_{\mathcal M}+1_{\mathcal M}\otimes m$. 

For $d\geq1$, we define $\mathfrak{lie}(e_0,e_1)[\check d]:=\{x\in\mathfrak{lie}(e_0,e_1)|x[d]=0\}$, where we recall that $x[d]$ is the degree $d$ 
component of an element $x\in\mathfrak{lie}(e_0,e_1)$. Then $\mathfrak{lie}(e_0,e_1)[\check d]$ is the direct sum of all the homogeneous components 
of $\mathfrak{lie}(e_0,e_1)$ of degree $\neq d$. 

\begin{prop}\label{prop:37:2302}
$\mathfrak{stab}_{\mathfrak{lie}(e_0,e_1)}(\Delta^{\mathcal M})$ is the intersection with $\mathfrak{lie}(e_0,e_1)[\check 2]$ of the preimage of 
$\mathcal P(\mathcal M)\subset\mathcal M$ by the composed map $\mathfrak{lie}(e_0,e_1)\stackrel{\theta}{\to}\mathcal V
\stackrel{-\cdot 1_{\mathcal M}}{\to}\mathcal M$, i.e. 
\begin{equation}\label{+1459}
\mathfrak{stab}_{\mathfrak{lie}(e_0,e_1)}(\Delta^{\mathcal M})=\theta^{-1}\Big((-\cdot 1_{\mathcal M})^{-1}(\mathcal P(\mathcal M))\Big)
\cap \mathfrak{lie}(e_0,e_1)[\check 2]. 
\end{equation}
In other terms,  $\mathfrak{stab}_{\mathfrak{lie}(e_0,e_1)}(\Delta^{\mathcal M})=\{x\in\mathfrak{lie}(e_0,e_1)|
x[2]=0$ and $\theta(x)\cdot 1_{\mathcal M}\in\mathcal P(\mathcal M)\}$. 
\end{prop}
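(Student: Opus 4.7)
The plan is to recognize this proposition as a translation of \cite[Thm.~3.10]{EF0} into the notation developed above, and to carry out that translation in two inclusion steps. The preliminary observation that both directions rely on is that, by Lemma \ref{lem:stab:05112021}(a) combined with the action formula \eqref{act:on:Delta:0311}, an element $v \in \mathfrak{lie}(e_0,e_1)$ lies in $\mathfrak{stab}_{\mathfrak{lie}(e_0,e_1)}(\Delta^{\mathcal M})$ if and only if the endomorphism $\mathrm{der}^{\mathcal M,(10)}_{\theta(v)}$ of $\mathcal M$ is a coderivation of the coalgebra $(\mathcal M, \Delta^{\mathcal M})$.

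For the inclusion $\subseteq$, I would first note that since $\Delta^{\mathcal W}$ is an algebra morphism one has $\Delta^{\mathcal W}(1)=1\otimes 1$, so by \eqref{rel:Delta:M:Delta:W} the element $1_{\mathcal M}$ is group-like for $\Delta^{\mathcal M}$. Evaluating the coderivation identity at $1_{\mathcal M}$ and using $\mathrm{der}^{\mathcal M,(10)}_{\theta(v)}(1_{\mathcal M}) = \theta(v)\cdot 1_{\mathcal M}$ then yields $\theta(v)\cdot 1_{\mathcal M} \in \mathcal P(\mathcal M)$. The remaining condition $v[2]=0$ would be obtained from the fact that $\mathfrak{stab}_{\mathfrak{lie}(e_0,e_1)}(\Delta^{\mathcal M})$ is graded (Lemma \ref{lem:stab:05112021}(b)), together with a direct check, on a few low-degree elements of $\mathcal M$, that $[e_0,e_1]*\Delta^{\mathcal M}\neq 0$; this shows that the degree-two graded piece of the stabilizer is trivial.

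The reverse inclusion $\supseteq$ is the substantive content and is where \cite[Thm.~3.10]{EF0} intervenes: under the correspondence $x \rightsquigarrow \mathrm{der}^{\mathcal M,(10)}_{\theta(x)}$, that theorem asserts exactly that for $x \in \mathfrak{lie}(e_0,e_1)$ with $x[2]=0$ and $\theta(x)\cdot 1_{\mathcal M} \in \mathcal P(\mathcal M)$, the operator $\mathrm{der}^{\mathcal M,(10)}_{\theta(x)}$ is a coderivation of $(\mathcal M, \Delta^{\mathcal M})$, hence $x \in \mathfrak{stab}_{\mathfrak{lie}(e_0,e_1)}(\Delta^{\mathcal M})$ by the preliminary observation. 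The main obstacle is located here: one must bootstrap from a single ``pointwise'' primitivity condition at $1_{\mathcal M}$ to the full coderivation identity on all of $\mathcal M$, and the hypothesis $x[2]=0$ serves precisely to kill the unique graded degree at which this extension would otherwise fail. Once the dictionary between the language of \cite{EF0} and the present notation is explicitly set up, the theorem transfers verbatim and \eqref{+1459} follows.
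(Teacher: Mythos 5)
Your proposal is correct in outline and, at its core, rests on the same pillar as the paper's proof: Theorem 3.10 of \cite{EF0}, which supplies the hard implication (from primitivity of $\theta(x)\cdot 1_{\mathcal M}$ to the full coderivation identity for $\mathrm{der}^{\mathcal M,(10)}_{\theta(x)}$). The differences are in the periphery. For the inclusion $\subseteq$ you argue directly: since $1_{\mathcal M}$ is grouplike and $\mathrm{der}^{\mathcal M,(10)}_{\theta(v)}(1_{\mathcal M})=\theta(v)\cdot 1_{\mathcal M}$, evaluating the coderivation identity at $1_{\mathcal M}$ gives primitivity; this is more self-contained than the paper, which reads both inclusions off \cite{EF0}. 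For the vanishing of the degree-$2$ component of the stabilizer you propose a direct check that $[e_0,e_1]*\Delta^{\mathcal M}\neq 0$ (feasible, and analogous to what the paper does for $\Delta^{\mathcal W}$ in Lemma \ref{lemma:van:deg:2:component}, but you do not carry it out), whereas the paper extracts it from the condition $(\psi_\star|y_{2,1})=0$ appearing in \cite{EF0} together with the computation $[x_0,x_1]_\star=y_{2,1}-\tfrac12(y_{1,1})^2$. The one place where you underestimate the work is the reverse inclusion: Theorem 3.10 of \cite{EF0} does not assert ``$x[2]=0$ and primitivity imply membership in the stabilizer'' verbatim; it asserts an equality of the stabilizer with a set cut out by primitivity plus the normalizations $(\psi|x_0)=(\psi|x_1)=(\psi_\star|y_{2,1})=0$, direct-summed with $\mathbb Q x_0\oplus\mathbb Q x_1$. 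Converting this into the clean form of \eqref{+1459} requires a degree-by-degree reconciliation (degree $1$: every degree-one element of $\mathbb Q\langle Y\rangle$ is primitive, so the conditions $(\psi|x_0)=(\psi|x_1)=0$ and the summand $\mathbb Q x_0\oplus\mathbb Q x_1$ cancel out; degree $2$: $[x_0,x_1]_\star$ is primitive but pairs nontrivially with $y_{2,1}$), and this, together with making the two intertwining isomorphisms explicit, is where most of the paper's proof actually lives. None of this invalidates your plan, but the ``verbatim transfer'' you invoke at the end conceals the only genuinely non-trivial bookkeeping in the argument.
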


\begin{proof} Combining Definitions 3.1 and 3.2 and Theorem 3.10 from \cite{EF0} in the case $\Gamma=\{1\}$, one obtains the equality 
\begin{equation}\label{from:EF0}
\mathfrak{stab}(\Delta_\star)=\{\psi\in\mathfrak{Lib}(X)|(\psi|x_0)=(\psi|x_1)=0,(\psi_\star|y_{2,1})=0,\Delta_\star(\psi_\star)=
\psi_\star\otimes 1+1\otimes\psi_\star\}\oplus\mathbb Qx_0\oplus\mathbb Qx_1,  
\end{equation}
where $\mathfrak{stab}(\Delta_\star)$ and $\mathfrak{Lib}(X)$ are the graded Lie algebras defined in \cite{EF0}, §2.5 and §2.1.1, 
the maps $\psi\mapsto(\psi|x_0)$, $\psi\mapsto(\psi|x_1)$ and $\tilde\psi\mapsto(\tilde\psi|y_{21})$ are defined in \cite{R}, §1.6, 
the map $\mathfrak{Lib}(X)\to\mathbb Q\langle Y\rangle$, $\psi\mapsto\psi_\star$ is defined in \cite{EF0}, (2.5), where
$\mathbb Q\langle Y\rangle$ is the graded algebra defined in \cite{EF0}, §2.2, and $\Delta_\star : \mathbb Q\langle Y\rangle\to\mathbb Q\langle 
Y\rangle^{\otimes2}$ is the graded coproduct defined in \cite{EF0}, §2.2. 

Both $\mathfrak{stab}(\Delta_\star)$ and $\{\psi\in\mathfrak{Lib}(X)|\Delta_\star(\psi_\star)=
\psi_\star\otimes 1+1\otimes\psi_\star\}$ are graded subspaces of $\mathfrak{Lib}(X)$ and \eqref{from:EF0} implies that their components of any 
degree $d\neq 1,2$ are equal. \eqref{from:EF0} also implies that $\mathfrak{stab}(\Delta_\star)[1]=\mathbb Qx_0\oplus\mathbb Qx_1$, while the fact 
that any element of 
$\mathbb Q\langle Y\rangle$ of degree one is primitive for $\Delta_\star$, together with the fact that $\psi\mapsto\psi_\star$ is graded, implies  
$\{\psi\in\mathfrak{Lib}(X)|\Delta_\star(\psi_\star)=
\psi_\star\otimes 1+1\otimes\psi_\star\}[1]=\mathbb Qx_0\oplus\mathbb Qx_1$, therefore the degree 1 components of  $\mathfrak{stab}(\Delta_\star)$ 
and 
$\{\psi\in\mathfrak{lie}(e_0,e_1)|\Delta_\star(\psi_\star)=\psi_\star\otimes 1+1\otimes\psi_\star\}$ are equal. 
Finally, $\mathfrak{Lib}(X)[2]$ is one-dimensional, spanned by $[x_0,x_1]$. One computes $[x_0,x_1]_\star=y_{2,1}-(1/2)(y_{1,1})^2$, which is 
primitive for $\Delta_\star$, therefore 
$\{\psi\in\mathfrak{Lib}(X)|\Delta_\star(\psi_\star)=
\psi_\star\otimes 1+1\otimes\psi_\star\}[2]=\mathbb Q\cdot [x_0,x_1]$. On the other hand, $[x_0,x_1]_\star=y_{2,1}-(1/2)(y_{1,1})^2$ also implies 
$([x_0,x_1]_\star|y_{2,1})=1$, which together with \eqref{from:EF0} implies $\mathfrak{stab}(\Delta_\star)[2]=0$.  Putting together these results, 
one obtains 
\begin{equation}\label{result:of:work}
\mathfrak{stab}(\Delta_\star)=\{\psi\in\mathfrak{Lib}(X)|\psi[2]=0\text{ and } \Delta_\star(\psi_\star)=
\psi_\star\otimes 1+1\otimes\psi_\star\}. 
\end{equation}
The map $x_0\mapsto e_0,x_1\mapsto -e_1$ defines a Lie algebra isomorphism $\mathfrak{Lib}(X)\to\mathfrak{lie}(e_0,e_1)$. 
%, where $\mathfrak{Lib}(X)$ is the Lie algebra from \cite{EF0}, \S2.1.1. 
The map taking, for any $r\geq 0$ and $(n_1,\ldots,n_r)\in\mathbb Z_{\geq 1}^r$, the element  $y_{n_1,1}\cdots y_{n_r,1}\in \mathbb Q\langle Y\rangle$ to 
$(-1)^re_0^{n_1-1}e_1\cdots e_0^{n_r-1}e_1\cdot 1_{\mathcal M}\in\mathcal M$ defines a vector space isomorphism $\mathbb Q\langle Y\rangle\to \mathcal M$ with $1\mapsto 1_\mathcal M$. %, where $\mathbb Q\langle Y\rangle$ is the vector space from \cite{EF0}, \S2.2. 
These isomorphisms intertwine the maps $\mathfrak{Lib}(X)\to\mathbb Q\langle Y\rangle$, $\psi\mapsto\psi_\star$ %from \cite{EF0}, (2.5) 
and $\mathfrak{lie}(e_0,e_1)\mapsto\mathcal M$, $x\mapsto \theta(x)\cdot 1_{\mathcal M}$. The latter isomorphism also intertwines the products 
$\Delta_\star : \mathbb Q\langle Y\rangle\to \mathbb Q\langle Y\rangle^{\otimes2}$ and $\Delta^{\mathcal M} : \mathcal M\to\mathcal M^{\otimes2}$. 
All this implies that the images by the isomorphism $\mathfrak{Lib}(X)\to\mathfrak{lie}(e_0,e_1)$ of the subspaces $\mathfrak{stab}(\Delta_\star)$, 
$\{\psi\in\mathfrak{Lib}(X)|\psi[2]=0\}$ and  $\{\psi\in\mathfrak{Lib}(X)|\Delta_\star(\psi_\star)=\psi_\star\otimes1+1\otimes\psi_\star\}$ of 
$\mathfrak{Lib}(X)$ are respectively the subspaces $\mathfrak{stab}_{\mathfrak{lie}(e_0,e_1)}(\Delta^{\mathcal M})$, $\mathfrak{lie}(e_0,e_1)[\check 
2]$ and $\{x\in\mathfrak{lie}(e_0,e_1)|\theta(x)\cdot 1_{\mathcal M}\in\mathcal P(\mathcal M)\}$ of $\mathfrak{lie}(e_0,e_1)$. The result then 
follows from applying this isomorphism to \eqref{result:of:work}. 
\end{proof}

\section{Decomposition of the map $\mathcal V_0\to\mathrm{Der}_{\Delta^{\mathcal W}}(\mathcal W,\mathcal W^{\otimes2})$}
\label{sect:4:2501}

In this section, we make explicit a decomposition of the map $\mathcal V_0\to\mathrm{Der}_{\Delta^{\mathcal W}}(\mathcal W,
\mathcal W^{\otimes2})$. Its constituents $\mathbf{H}$, $\mathbf{h}$ and $\mathbf i$  are defined respectively in \S\S\ref{sect:41:2501}, 
\ref{subsect:42:2501} and \ref{subsect:43:2501}. The identification of the map $\mathcal V_0\to\mathrm{Der}_{\Delta^{\mathcal W}}(\mathcal W,
\mathcal W^{\otimes2})$ with $\mathbf i\circ \mathbf{h} \circ \mathbf{H}$ is obtained in \S\ref{sect:44:2501} (Proposition 
\ref{prop:upper:comm:triangle}). 

\subsection{The map $\mathbf{H} : \mathcal V_0\to(\oplus_{k\geq0}\mathcal W^{\otimes2})^{\oplus 2}
\oplus(\oplus_{i\geq0}\mathcal W)$}\label{sect:41:2501}

Since $\mathcal V$ is freely generated by $e_0,e_1$, a basis of $\mathcal V_0$ is 
\begin{equation}\label{basis:of:V}
e_0^{a_0-1}e_1\cdots e_1e_0^{a_l-1},\quad\mathrm{ where }\quad l\geq 0,\quad a_0,\ldots,a_l>0\quad\mathrm{ and }\quad a_0>1\quad\mathrm{ if }
\quad l=0.
\end{equation}

\begin{defn}
(a) For $k>0$, ${{L}}_k,{{R}}_k : \mathcal V_0\to\mathcal W^{\otimes2}$ are the linear maps such that   
\begin{equation}\label{eq:beg:1309}
{{L}}_k(e_0^{a_0-1}e_1\cdots e_1e_0^{a_l-1})
=\delta_{a_l,k}(\Delta^{\mathcal W}-\mathrm{id}\otimes1-1\otimes\mathrm{id})(\tilde y_{a_0}\cdots\tilde y_{a_{l-1}}),
\end{equation}
\begin{equation}\label{eq:end:1309} 
{{R}}_k (e_0^{a_0-1}e_1\cdots e_1e_0^{a_l-1})
=\delta_{a_0,k}(\Delta^{\mathcal W}-\mathrm{id}\otimes1-1\otimes\mathrm{id})(\tilde y_{a_1}\cdots\tilde y_{a_l}),
\end{equation}
for any $l\geq 0$ and $a_0,\ldots,a_l>0$ such that $a_0>1$ if $l=0$.

(b) For $i>0$, ${{M}}_i : \mathcal V_0\to\mathcal W$ is the linear map such that  
\begin{equation}\label{eq:mix:1309}
{{M}}_i(e_0^{a_0-1}e_1\cdots e_1e_0^{a_l-1})=\tilde y_{a_0}\cdots\tilde y_{a_{l-1}}\tilde y_{a_l-i}
-\tilde y_{a_0-i}\tilde y_{a_1}\cdots\tilde y_{a_l}
\end{equation}
for any $l\geq 0$ and $a_0,\ldots,a_l>0$ such that $a_0>1$ if $l=0$; recall that $\tilde y_0=-1$ and $\tilde y_a=0$ for $a<0$. 
\end{defn}

\begin{defn}\label{def:map:1409}
 $\mathbf{H} : \mathcal V_0\to(\oplus_{k\geq0}\mathcal W^{\otimes2})\oplus
(\oplus_{k\geq0}\mathcal W^{\otimes2})\oplus
(\oplus_{i\geq0}\mathcal W)$ is the linear map 
$$
\mathbf{H}:=(\oplus_{k\geq 0}{{L}}_{k+1})\oplus(\oplus_{k\geq 0}{{R}}_{k+1})\oplus(\oplus_{i\geq0}{{M}}_{i+1}).
$$
Explicitly, if $v\in\mathcal V_0$, then $$
\mathbf{H}(v)=(({{L}}_{k+1}(v))_{k\geq 0},({{R}}_{k+1}(v))_{k\geq 0},({{M}}_{i+1}(v))_{i\geq 0})\in (\oplus_{k\geq0}\mathcal W^{\otimes2})\oplus
(\oplus_{k\geq0}\mathcal W^{\otimes2})\oplus(\oplus_{i\geq0}\mathcal W).
$$
\end{defn}

\subsection{The map $\mathbf h : (\oplus_{k\geq0}\mathcal W^{\otimes2})^{\oplus 2}
\oplus(\oplus_{i\geq0}\mathcal W)\to\mathrm{Map}(\mathbb Z_{>0},
\mathcal W^{\otimes2})$}\label{subsect:42:2501}

The set $\mathrm{Map}(\mathbb Z_{>0},\mathcal W^{\otimes2})$ of maps from $\mathbb Z_{>0}$ to $\mathcal W^{\otimes2}$ is equipped with  
the $\mathbb Q$-vector space structure inherited from the vector space structure of $\mathcal W^{\otimes2}$.  
Let $\Delta$ be the element of $\mathrm{Map}(\mathbb Z_{>0},\mathcal W^{\otimes2})$ defined by 
\begin{equation}\label{def:f(n)}
\forall n>0,\quad 
\Delta(n):=-\Delta^{\mathcal W}(\tilde y_n)=\sum_{i=0}^n \tilde y_i\otimes\tilde y_{n-i}\in\mathcal W^{\otimes2}. 
\end{equation}

\begin{lem}
(a) For any $k\geq 0$, there is a unique pair of linear maps $\tilde\ell_k,\tilde r_k  : 
\mathcal W\to\mathrm{Map}(\mathbb Z_{>0},\mathcal W^{\otimes2})$, 
such that 
\begin{equation}\label{12151309}
\forall w\in\mathcal W,\quad\forall n>0,\quad \tilde\ell_k(w)(n):=w\Delta(n+k)
%\sum_{i=0}^{n+l}\tilde y_i\otimes\tilde y_{n+l-i}
,\quad 
\tilde r_k(w)(n):=\Delta(n+k)w %(\sum_{i=0}^{n+l}\tilde y_i\otimes\tilde y_{n+l-i}) w  
\quad \text{(equality in $\mathcal W^{\otimes2}$)}.\end{equation}

(b) For any $i\geq 0$, there is a unique linear map $m_i : 
\mathcal V\to\mathrm{Map}(\mathbb Z_{>0},\mathcal W^{\otimes2})$, 
such that 
\begin{equation}\label{12151309bis}
\forall v\in\mathcal V,\quad\forall n>0,\quad \tilde m_i(v)(n):=v\otimes\tilde y_{n+i}+\tilde y_{n+i}\otimes v
\quad \text{(equality in $\mathcal W^{\otimes2}$)}. 
\end{equation}
\end{lem}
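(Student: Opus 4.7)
The plan is to treat (a) and (b) in parallel by exploiting the universal property of the target space $\mathrm{Map}(\mathbb Z_{>0},\mathcal W^{\otimes2})$: a linear map from any $\mathbb Q$-vector space $E$ into $\mathrm{Map}(\mathbb Z_{>0},\mathcal W^{\otimes2})$ is the same datum as a family $(f_n)_{n>0}$ of linear maps $E\to\mathcal W^{\otimes2}$, via the evaluation isomorphism
\[
\mathrm{Hom}_{\mathbb Q\operatorname{-vec}}(E,\mathrm{Map}(\mathbb Z_{>0},\mathcal W^{\otimes2}))
\stackrel{\sim}{\to}\prod_{n>0}\mathrm{Hom}_{\mathbb Q\operatorname{-vec}}(E,\mathcal W^{\otimes2}).
\]
Existence then reduces to constructing, for each fixed $n>0$, a linear map of the required form with values in $\mathcal W^{\otimes2}$, and uniqueness is automatic since the prescribed values determine the map pointwise in $n$.

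For part (a), fix $k\geq 0$ and $n>0$. Recall that $\mathcal W^{\otimes2}$ is a $\mathcal W$-bimodule via the algebra morphism $\Delta^{\mathcal W}\in\mathrm{Hom}_{\mathbb Q\operatorname{-alg}}(\mathcal W,\mathcal W^{\otimes2})$, so the expressions $w\cdot\Delta(n+k)$ and $\Delta(n+k)\cdot w$ make sense in $\mathcal W^{\otimes2}$ and are $\mathbb Q$-bilinear in their arguments. Fixing the second factor $\Delta(n+k)\in\mathcal W^{\otimes2}$, the maps $w\mapsto w\Delta(n+k)$ and $w\mapsto \Delta(n+k)w$ are therefore linear maps $\mathcal W\to\mathcal W^{\otimes2}$. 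Assembling these over $n>0$ via the isomorphism above yields the desired maps $\tilde\ell_k$ and $\tilde r_k$, and uniqueness is clear.

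For part (b), fix $i\geq 0$ and $n>0$. The assignment $v\mapsto v\otimes\tilde y_{n+i}+\tilde y_{n+i}\otimes v$ is a $\mathbb Q$-linear map $\mathcal V\to\mathcal W^{\otimes2}$: indeed $\tilde y_{n+i}\in\mathcal W$ since $n+i\geq 1$, and the map $v\mapsto v\otimes\tilde y_{n+i}$ is the composition of the inclusion $\mathcal V\hookrightarrow\mathcal W^{\otimes2}$ is not quite right — more properly one uses that tensor product with a fixed element is linear, combined with the inclusion $\mathcal W\subset\mathcal V$ (we only need the target, but the source is all of $\mathcal V$, so the values $v\otimes\tilde y_{n+i}$ live in $\mathcal V\otimes\mathcal W$, which we view inside $\mathcal W^{\otimes2}$ via the inclusion $\mathcal W\subset\mathcal V$ — however, this requires $v\in\mathcal W$). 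Strictly, since $\mathcal W\subsetneq\mathcal V$, the formula must be interpreted in $\mathcal V^{\otimes2}$; the statement implicitly asserts that these values lie in $\mathcal W^{\otimes2}$, which one checks by verifying that, in the given formula, both tensor factors can be taken in $\mathcal W$ after reorganization, or by reinterpreting the codomain. Once this is justified, linearity in $v$ is immediate, and assembly over $n$ yields $\tilde m_i$ with the stated uniqueness.

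The only substantive point is therefore the bimodule/tensor interpretation of the right-hand sides; no computation beyond invoking bilinearity is required. No technical obstacle is anticipated.
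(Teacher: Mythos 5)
Your proof is correct and matches the paper's own one-line argument: both rest entirely on the linearity of the right-hand sides in $w$, resp.\ $v$ (the passage between linear maps into $\mathrm{Map}(\mathbb Z_{>0},\mathcal W^{\otimes2})$ and families of linear maps into $\mathcal W^{\otimes2}$ indexed by $n$, which you make explicit, is left implicit in the paper). The codomain issue you flag in part (b) is a genuine imprecision of the \emph{statement} rather than a gap in your argument --- for $v\in\mathcal V\setminus\mathcal W$ the element $v\otimes\tilde y_{n+i}$ lies only in $\mathcal V^{\otimes2}$, and the intended domain is evidently $\mathcal W$, consistent with Definition \ref{def:map':1409} and Lemma \ref{lem:der:der:Delta:tilde:yn}, where $\tilde m_i$ is only ever applied to elements of $\mathcal W$.
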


\begin{proof} The follows from the fact that the right-hand sides of the equalities of \eqref{12151309} depend linearly on $w\in\mathcal W$, and the 
right-hand side of the equality in \eqref{12151309bis} depends linearly on $v\in\mathcal V$.   
\end{proof}

\begin{defn}\label{def:map':1409}
 $\mathbf h : (\oplus_{k\geq0}\mathcal W^{\otimes2})\oplus
(\oplus_{k\geq0}\mathcal W^{\otimes2})\oplus
(\oplus_{i\geq0}\mathcal W)
\to\mathrm{Map}(\mathbb Z_{>0},\mathcal W^{\otimes2})$ is the linear map 
$$
\mathbf h:=(\oplus_{k\geq 0}\tilde\ell_k)\oplus(\oplus_{k\geq 0}-\tilde r_k)\oplus(\oplus_{i\geq0}\tilde m_i).
$$
Explicitly, if $(\underline a,\underline b,\underline z)\in(\oplus_{k\geq0}\mathcal W^{\otimes2})\oplus
(\oplus_{k\geq0}\mathcal W^{\otimes2})\oplus
(\oplus_{i\geq0}\mathcal W)$, with $\underline a=(a_k)_{k\geq0}$,  $\underline b=(b_k)_{k\geq0}$,  $\underline z=(z_i)_{i\geq0}$, then $\mathbf h(\underline a,\underline b,\underline z)$ is the element of $\mathrm{Map}(\mathbb Z_{>0},\mathcal W^{\otimes2})$ such that 
\begin{equation}\label{explicit:map'}
\forall n>0, \quad \mathbf h(\underline a,\underline b,\underline z)(n)=\sum_{k\geq 0} (a_k\Delta(n+k)-\Delta(n+k)b_k)+\sum_{i\geq0}
(z_i\otimes \tilde y_{n+i}+\tilde y_{n+i}\otimes z_i), 
\end{equation}
where for $n>0$, $\Delta(n)$ is given by \eqref{def:f(n)}.  
\end{defn}

\subsection{The isomorphism $\mathbf i : \mathrm{Map}(\mathbb Z_{>0},
\mathcal W^{\otimes2})\stackrel{\sim}{\to}\mathrm{Der}_{\Delta^{\mathcal W}}(\mathcal W,\mathcal W^{\otimes2})$}
\label{subsect:43:2501}

\begin{defn}
 $\mathrm{Der}_{\Delta^{\mathcal W}}(\mathcal W,\mathcal W^{\otimes2})$ is the set of derivations of $\mathcal W$ with values in $\mathcal 
 W^{\otimes2}$, viewed as a $\mathcal W$-bimodule using $\Delta^{\mathcal W}$; explicitly, this is the set of 
$\mathbb Q$-linear maps $\delta : \mathcal W\to\mathcal W^{\otimes2}$ such that $\delta(ww')=\delta(w)\Delta^{\mathcal W}(w')+\Delta^{\mathcal 
W}(w)\delta(w')$. 
\end{defn}

\begin{lem}\label{lem46:1401}
(a) For $(\delta_n)_{n>0}\in\mathrm{Map}(\mathbb Z_{>0},\mathcal W^{\otimes2})$, there is a unique element 
$\mathbf i((\delta_n)_{n>0})\in\mathrm{Der}_{\Delta^{\mathcal W}}(\mathcal W,\mathcal W^{\otimes2})$ such that 
for $l\geq 0$, 
$n_1,\ldots,n_l>0$, 
$$
\mathbf i((\delta_n)_{n>0})(\tilde y_{n_1}\cdots \tilde y_{n_l})=\sum_{i=1}^l
\Delta^{\mathcal W}(\tilde y_{n_1}\cdots\tilde y_{n_{i-1}})\delta_{n_i}\Delta^{\mathcal W}(\tilde y_{n_{i+1}}\cdots\tilde y_{n_l}). 
$$

(b) The map $\mathbf i : \mathrm{Map}(\mathbb Z_{>0},\mathcal W^{\otimes2})\to
\mathrm{Der}_{\Delta^{\mathcal W}}(\mathcal W,\mathcal W^{\otimes2})$ is a vector space isomorphism, inverse to the map   
$\mathrm{Der}_{\Delta^{\mathcal W}}(\mathcal W,\mathcal W^{\otimes2})\to\mathrm{Map}(\mathbb Z_{>0},\mathcal W^{\otimes2})$ given 
by $\delta\mapsto (\delta(\tilde y_n))_{n>0}$. 
\end{lem}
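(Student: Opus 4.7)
The plan is to use the fact, already noted in the paper, that $\mathcal W$ is freely generated as an associative algebra by the family $(\tilde y_a)_{a>0}$. This reduces both parts of the statement to the universal property of the free algebra with respect to derivations into a bimodule.

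For part (a), I would argue existence and uniqueness simultaneously as follows. Observe that if $\delta\in\mathrm{Der}_{\Delta^{\mathcal W}}(\mathcal W,\mathcal W^{\otimes2})$, then iterating the defining Leibniz rule $\delta(ww')=\delta(w)\Delta^{\mathcal W}(w')+\Delta^{\mathcal W}(w)\delta(w')$ yields
$$
\delta(\tilde y_{n_1}\cdots\tilde y_{n_l})=\sum_{i=1}^l\Delta^{\mathcal W}(\tilde y_{n_1}\cdots\tilde y_{n_{i-1}})\,\delta(\tilde y_{n_i})\,\Delta^{\mathcal W}(\tilde y_{n_{i+1}}\cdots\tilde y_{n_l})
$$
(by a straightforward induction on $l$, using that $\Delta^{\mathcal W}$ is an algebra morphism, so $\Delta^{\mathcal W}(1)=1\otimes 1$ and $\Delta^{\mathcal W}(ww')=\Delta^{\mathcal W}(w)\Delta^{\mathcal W}(w')$). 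This already shows uniqueness: a $\Delta^{\mathcal W}$-derivation is determined by its values on the generators $(\tilde y_n)_{n>0}$. For existence, given $(\delta_n)_{n>0}$, one defines a $\mathbb Q$-linear map $\mathbf i((\delta_n)_{n>0}):\mathcal W\to\mathcal W^{\otimes2}$ by the announced formula on the basis of words $\tilde y_{n_1}\cdots\tilde y_{n_l}$ afforded by freeness of $\mathcal W$, and one checks by a direct computation on basis elements that this map satisfies the Leibniz rule for $\Delta^{\mathcal W}$: concatenating two words of lengths $l$ and $l'$ and splitting the resulting sum of $l+l'$ terms at position $l$ reproduces $\mathbf i((\delta_n))(w)\Delta^{\mathcal W}(w')+\Delta^{\mathcal W}(w)\mathbf i((\delta_n))(w')$, where the prefactor $\Delta^{\mathcal W}(\tilde y_{n_1}\cdots\tilde y_{n_{i-1}})$ in each term splits multiplicatively via the algebra-morphism property of $\Delta^{\mathcal W}$.

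For part (b), the two compositions are the identity. First, evaluating the formula of (a) at a single generator, i.e.\ in the case $l=1$ with $n_1=n$, gives $\mathbf i((\delta_n)_{n>0})(\tilde y_n)=\Delta^{\mathcal W}(1)\,\delta_n\,\Delta^{\mathcal W}(1)=\delta_n$, so the composition $\mathrm{Map}(\mathbb Z_{>0},\mathcal W^{\otimes2})\to\mathrm{Der}_{\Delta^{\mathcal W}}(\mathcal W,\mathcal W^{\otimes2})\to\mathrm{Map}(\mathbb Z_{>0},\mathcal W^{\otimes2})$ is the identity. Second, for any $\delta\in\mathrm{Der}_{\Delta^{\mathcal W}}(\mathcal W,\mathcal W^{\otimes2})$, the iterated Leibniz formula recalled above shows that $\delta$ coincides with $\mathbf i((\delta(\tilde y_n))_{n>0})$ on every word $\tilde y_{n_1}\cdots\tilde y_{n_l}$, hence on all of $\mathcal W$; this shows the other composition is also the identity.

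No step should present any real obstacle: the entire argument is a formal consequence of the free generation of $\mathcal W$ by $(\tilde y_a)_{a>0}$ and the multiplicativity of $\Delta^{\mathcal W}$. The only point requiring any care is the verification of the Leibniz rule for the map defined by the formula in (a), which is a matter of bookkeeping the splitting of a sum of $l+l'$ terms into a left block of $l$ terms and a right block of $l'$ terms.
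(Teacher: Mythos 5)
Your proposal is correct and follows essentially the same route as the paper: both arguments rest on the free generation of $\mathcal W$ by $(\tilde y_a)_{a>0}$ (so that the announced formula defines a linear map on the word basis), the verification of the Leibniz rule for $\Delta^{\mathcal W}$ by splitting the sum of terms at the concatenation point, and the observation that the two compositions with $\delta\mapsto(\delta(\tilde y_n))_{n>0}$ are the identity because a $\Delta^{\mathcal W}$-derivation is determined by its values on the generators. You merely spell out the steps that the paper compresses into ``one checks that''.
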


\begin{proof} (a) Since $\mathcal W$ is freely generated, as an algebra, by the family $(\tilde y_n)_{n>0}$, a vector space basis of 
$\mathcal W$ is the family $(\tilde y_{n_1}\cdots \tilde y_{n_l})_{l\geq 0, n_1,\ldots,n_l>0}$ (the empty product, 
corresponding to $l=0$, being equal to $1$). It follows that $\mathbf i((\delta_n)_{n>0})$ is well-defined as an element of 
$\mathrm{Hom}_{\mathbb Q\operatorname{-vec}}(\mathcal W,\mathcal W^{\otimes2})$. One checks that $\mathbf i((\delta_n)_{n>0})$ belongs to
$\mathrm{Der}_{\Delta^{\mathcal W}}(\mathcal W,\mathcal W^{\otimes2})$. 

(b) One checks that the map $\mathbf i$ is linear. Let $\mathrm{ev}$ be the map $\delta\mapsto (\delta(\tilde y_n))_{n>0}$. 
Then for $(\delta_n)_{n>0}\in\mathrm{Map}(\mathbb Z_{>0},\mathcal W^{\otimes2})$, 
the element $\mathrm{ev}\circ\mathbf i((\delta_n)_{n>0})$ is the map taking $n>0$ to $\mathbf i((\delta_n)_{n>0})(\delta_n)=\delta_n$ so  
$\mathrm{ev}\circ\mathbf i=\mathrm{id}$. For $\delta\in\mathrm{Der}_{\Delta^{\mathcal W}}(\mathcal W,\mathcal W^{\otimes2})$, 
$\mathbf i\circ\mathrm{ev}(\delta)$ and $\delta$ are elements of $\mathrm{Der}_{\Delta^{\mathcal W}}(\mathcal W,\mathcal W^{\otimes2})$
whose restrictions to the set $\{y_n|n>0\}$, which is a generating set of $\mathcal W$ coincide, therefore they are equal. It follows that 
$\mathbf i\circ\mathrm{ev}=\mathrm{id}$. 
\end{proof}

\subsection{Decomposition of the map $\mathcal V_0\to\mathrm{Der}_{\Delta^{\mathcal W}}(\mathcal W,\mathcal W^{\otimes2})$}
\label{sect:44:2501}

\begin{lem}\label{lem:form:of:der}
If $l\geq 0$, $a_1,\ldots,a_l>0$ with $a_0>1$ if $l=0$ and $v:=e_0^{a_0-1}e_1\cdots e_1e_0^{a_l-1}\in\mathcal V_0$, 
then the derivation $\mathrm{der}_v^{\mathcal W,(1)}$ of $\mathcal W$ is such that 
$$
\forall n>0,\quad \mathrm{der}^{\mathcal W,(1)}_v(\tilde y_n)=\tilde y_{a_0}\cdots\tilde y_{a_{l-1}}\tilde y_{a_l-1+n}
-\tilde y_{a_0-1+n}\tilde y_{a_1}\cdots\tilde y_{a_{l}}.
$$
\end{lem}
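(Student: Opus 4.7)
The key observation is that $\mathrm{der}_v^{\mathcal V,(1)}$ is an algebra derivation of $\mathcal V$ that annihilates $e_1$, so for $\tilde y_n = e_0^{n-1}e_1$ the Leibniz rule reduces the computation to $\mathrm{der}_v^{\mathcal V,(1)}(e_0^{n-1})\cdot e_1$. Applying the Leibniz rule a second time and substituting $\mathrm{der}_v^{\mathcal V,(1)}(e_0) = [v,e_0] = ve_0 - e_0 v$, the resulting sum telescopes:
$$\mathrm{der}_v^{\mathcal V,(1)}(e_0^{n-1}) = \sum_{i=0}^{n-2} e_0^{i}\bigl(ve_0 - e_0 v\bigr) e_0^{n-2-i} = v\, e_0^{n-1} - e_0^{n-1}\, v,$$
so that $\mathrm{der}_v^{\mathcal W,(1)}(\tilde y_n) = v\, e_0^{n-1}e_1 - e_0^{n-1}\, v\, e_1$. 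This identity is also consistent with the edge case $n=1$, where both sides vanish because $\mathrm{der}_v^{\mathcal V,(1)}(e_1)=0$.

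Next I substitute the explicit word $v = e_0^{a_0-1}e_1\cdots e_1 e_0^{a_l-1}$ into both terms. Juxtaposition collapses the adjacent blocks of $e_0$'s, giving
$$v\, e_0^{n-1}e_1 = e_0^{a_0-1}e_1 \cdots e_1 e_0^{a_l+n-2}e_1, \qquad e_0^{n-1}\, v\, e_1 = e_0^{a_0+n-2}e_1 \cdots e_1 e_0^{a_l-1}e_1.$$
Reading each factor via $\tilde y_a = e_0^{a-1}e_1$ (valid since the exponents $a_0,\dots,a_l$ and $a_0+n-1$, $a_l+n-1$ are all $\geq 1$ for $n \geq 1$) produces exactly $\tilde y_{a_0}\cdots\tilde y_{a_{l-1}}\tilde y_{a_l-1+n}$ and $\tilde y_{a_0-1+n}\tilde y_{a_1}\cdots\tilde y_{a_l}$, respectively. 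The boundary case $l=0$ (so $v=e_0^{a_0-1}$ with $a_0>1$) is automatic: $v$ commutes with $e_0^{n-1}$, so both sides are $0$, matching the claimed formula under the convention that the empty products $\tilde y_{a_0}\cdots\tilde y_{a_{-1}}$ and $\tilde y_{a_1}\cdots\tilde y_{a_0}$ equal $1$.

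There is no real obstacle in this proof; the entire argument is a three-line calculation followed by bookkeeping. The only point one has to keep an eye on is ensuring that no $\tilde y$-index is $0$ or negative on the right-hand side, so that the conventions $\tilde y_0 = -1$ and $\tilde y_{a<0}=0$ do not inadvertently intervene. Since every exponent appearing is at least $1$, this is the case, and the formula follows directly.
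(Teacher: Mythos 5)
Your proof is correct and follows essentially the same route as the paper's: reduce to $\mathrm{der}_v^{\mathcal V,(1)}(\tilde y_n)=[v,e_0^{n-1}]e_1$ using $\mathrm{der}_v^{\mathcal V,(1)}(e_1)=0$, then expand $v$ as a word and regroup the $e_0$-blocks into $\tilde y$'s. Your explicit telescoping of $\mathrm{der}_v^{\mathcal V,(1)}(e_0^{n-1})$ and your checks of the $n=1$ and $l=0$ edge cases are just slightly more detailed versions of steps the paper takes for granted.
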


\begin{proof} 
In the following computation in $\mathcal V$: 
\begin{align*}
&\mathrm{der}_v^{\mathcal W,(1)}(\tilde y_n)
=\mathrm{der}_v^{\mathcal V,(1)}(\tilde y_n)
=\mathrm{der}_v^{\mathcal V,(1)}(e_0^{n-1}e_1)=[v,e_0^{n-1}]e_1
=ve_0^{n-1}e_1-e_0^{n-1}ve_1
\\&=e_0^{a_0-1}e_1\cdots e_1e_0^{a_l+n-2}e_1-e_0^{a_0+n-2}e_1\cdots e_1e_0^{a_l-1}e_1
=\tilde y_{a_0}\cdots\tilde y_{a_{l-1}}\tilde y_{a_l+n-1}-\tilde y_{a_0+n-1}\tilde y_{a_1}\cdots\tilde y_{a_l},
\end{align*}
the first equality follows from the fact that $\mathrm{der}_v^{\mathcal W,(1)}$ is the restriction of $\mathrm{der}_v^{\mathcal V,(1)}$
to $\mathcal W$, the second and last equalities follow from the definition of $\tilde y_n$ (see \S\ref{sect:def:tilde:yn}), and the third equality 
follows from the definition of $\mathrm{der}_v^{\mathcal V,(1)}$ (\S\ref{sect:def:der:V:1}). As $\mathcal W\subset\mathcal V$, the resulting 
identity holds in $\mathcal W$, as claimed. 
\end{proof}

\begin{lem}\label{lem:der:der:Delta:tilde:yn}
For $l\geq 0$, $a_0,\ldots,a_l>0$ with $a_0>1$ if $l=0$, $v:=e_0^{a_0-1}e_1\cdots e_1e_0^{a_l-1}\in\mathcal V_0$ and for $n>0$, 
there holds (equality in $\mathcal W^{\otimes2}$) 
\begin{align*}
&(\mathrm{der}^{\mathcal W,(1)}_v\otimes\mathrm{id}+\mathrm{id}\otimes
\mathrm{der}^{\mathcal W,(1)}_v)\circ\Delta^{\mathcal W}(\tilde y_n)=\sum_{j\geq0}m_{j}(\tilde y_{a_0}\cdots\tilde y_{a_{l-1}}\tilde y_{a_l-1-j}
-\tilde y_{a_0-1-j}\tilde y_{a_1}\cdots\tilde y_{a_{l}})(\tilde y_n)
\\&+\Big(-\ell_{a_l-1}(\tilde y_{a_0}\cdots\tilde y_{a_{l-1}}\otimes1+1\otimes\tilde y_{a_0}
\cdots\tilde y_{a_{l-1}})+r_{a_0-1}(\tilde y_{a_1}\cdots\tilde y_{a_l}\otimes1+1\otimes\tilde y_{a_1}\cdots\tilde y_{a_l})\Big)(\tilde y_n). 
\end{align*}
\end{lem}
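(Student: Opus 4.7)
The plan is a direct computation, expanding both sides and matching terms via a careful reindexing.

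First I would expand the LHS. Applying \eqref{def:Delta:W} gives $\Delta^{\mathcal W}(\tilde y_n)=-\sum_{i=0}^n \tilde y_i \otimes \tilde y_{n-i}$; then applying the tensor derivation $\mathrm{der}^{\mathcal W,(1)}_v\otimes\mathrm{id}+\mathrm{id}\otimes\mathrm{der}^{\mathcal W,(1)}_v$ term by term, and using that $\mathrm{der}^{\mathcal W,(1)}_v$ annihilates $\tilde y_0 = -1$, reduces the LHS to
$$-\sum_{i=1}^n \bigl[\mathrm{der}^{\mathcal W,(1)}_v(\tilde y_i)\otimes \tilde y_{n-i} + \tilde y_{n-i}\otimes \mathrm{der}^{\mathcal W,(1)}_v(\tilde y_i)\bigr].$$
Substituting the formula from Lemma \ref{lem:form:of:der} and setting $A:=\tilde y_{a_0}\cdots\tilde y_{a_{l-1}}$ and $B:=\tilde y_{a_1}\cdots\tilde y_{a_l}$, the LHS becomes a sum of four families of monomials, of shape $A\tilde y_{a_l-1+i}\otimes \tilde y_{n-i}$ or $\tilde y_{a_0-1+i}B\otimes \tilde y_{n-i}$ and their mirror images on the opposite tensor slot.

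Next I would expand the RHS using the defining formulas \eqref{12151309} and \eqref{12151309bis}, combined with $\Delta(m) = \sum_{p=0}^m \tilde y_p \otimes \tilde y_{m-p}$ from \eqref{def:f(n)}. For instance, $\tilde\ell_{a_l-1}(A\otimes 1)(n) = \sum_{p=0}^{n+a_l-1} A\tilde y_p \otimes \tilde y_{n+a_l-1-p}$, and analogous formulas apply to the three other $\tilde\ell/\tilde r$ pieces. Using the conventions $\tilde y_{<0} = 0$ and $\tilde y_0 = -1$, the $\tilde m_j$ sum cuts off at $j = a_l-1$ on the $A$-branch and at $j = a_0-1$ on the $B$-branch, with the extremal terms contributing $\mp A$ and $\mp B$ at the cutoff.

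The key step is then a reindexing that stitches together the LHS and RHS pieces. For the LHS term $A\tilde y_{a_l-1+i}\otimes\tilde y_{n-i}$, the substitution $p = a_l-1+i$ sends the range $1\le i\le n$ to $a_l\le p\le n+a_l-1$, which is the complement inside the expansion of $\tilde\ell_{a_l-1}(A\otimes 1)(n)$ of the initial segment $0\le p\le a_l-1$; this missing initial segment is precisely the $A\tilde y_{a_l-1-j}\otimes \tilde y_{n+j}$ contribution of $\tilde m_j$ after setting $j = a_l-1-p$. Applying the same device symmetrically to the other three pieces, and checking that the boundary terms at $p\in\{a_l-1,a_0-1\}$ realign correctly via $\tilde y_0 = -1$, yields the claimed identity. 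The main obstacle is the bookkeeping of these boundary indices, where miscounting a single term would produce a spurious discrepancy between the bulk $\tilde\ell/\tilde r$-contributions and the boundary $\tilde m_j$-contributions; the degenerate case $l=0$, in which $A = B = 1$, $\mathrm{der}^{\mathcal W,(1)}_v = 0$ and $\tilde\ell_{a_0-1}(1\otimes 1) = \tilde r_{a_0-1}(1\otimes 1) = \Delta(n+a_0-1)$, must also be handled separately but reduces immediately to $0=0$.
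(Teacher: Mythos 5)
Your proposal is correct and follows essentially the same route as the paper's proof: expand $\Delta^{\mathcal W}(\tilde y_n)$, substitute Lemma \ref{lem:form:of:der}, and reindex so that the full ranges of the resulting sums match the $\tilde\ell_{a_l-1}$ and $\tilde r_{a_0-1}$ contributions while the missing initial segments (after $j:=a_l-1-p$, resp.\ $j:=a_0-1-p$) account for the $m_j$ terms. The paper phrases this same bookkeeping as ``addition of cancelling terms'' followed by the changes of variables $j:=-i$, $s:=a_l-1+i$, $t:=a_0-1+i$, so the two arguments coincide.
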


\begin{proof} One computes 
\begin{align}\label{interm:1309}
& (\mathrm{der}^{\mathcal W,(1)}_v\otimes\mathrm{id})\circ\Delta^{\mathcal W}(\tilde y_n)
=-(\mathrm{der}^{\mathcal W,(1)}_v\otimes\mathrm{id})(\sum_{i=0}^n \tilde y_i\otimes\tilde y_{n-i})
%=-\sum_{i=1}^{n} \mathrm{der}^{\mathcal W,(1)}_a(\tilde y_i)\otimes\tilde y_{n-i}
\\&\nonumber =-\sum_{i=1}^{n} (\tilde y_{a_0}\cdots\tilde y_{a_{l-1}}\tilde y_{a_l-1+i}
-\tilde y_{a_0-1+i}\tilde y_{a_1}\cdots\tilde y_{a_{l}})\otimes\tilde y_{n-i}
\\&\nonumber =-\sum_{i=1-a_l}^{n} \tilde y_{a_0}\cdots\tilde y_{a_{l-1}}\tilde y_{a_l-1+i}\otimes\tilde y_{n-i}
+\sum_{i=1-a_l}^{0} \tilde y_{a_0}\cdots\tilde y_{a_{l-1}}\tilde y_{a_l-1+i}\otimes\tilde y_{n-i}
\\&\nonumber +\sum_{i=1-a_0}^{n}\tilde y_{a_0-1+i}\tilde y_{a_1}\cdots\tilde y_{a_{l}}\otimes\tilde y_{n-i}
-\sum_{i=1-a_0}^{0}\tilde y_{a_0-1+i}\tilde y_{a_1}\cdots\tilde y_{a_{l}}\otimes\tilde y_{n-i}
\\&\nonumber=\sum_{j=0}^{a_l-1} \tilde y_{a_0}\cdots\tilde y_{a_{l-1}}\tilde y_{a_l-1-j}\otimes\tilde y_{n+j}
-\sum_{i=0}^{a_0-1}\tilde y_{a_0-1-j}\tilde y_{a_1}\cdots\tilde y_{a_{l}}\otimes\tilde y_{n+j}
\\&\nonumber -\sum_{s=0}^{n+a_l-1} \tilde y_{a_0}\cdots\tilde y_{a_{l-1}}\tilde y_s\otimes\tilde y_{n+a_l-1-s}
+\sum_{t=0}^{n+a_0-1}\tilde y_t\tilde y_{a_1}\cdots\tilde y_{a_{l}}\otimes\tilde y_{n+a_0-1-t}
\end{align}
where the first equality follows from \eqref{def:Delta:W}, the 
second equality follows from Lemma \ref{lem:form:of:der}, the third equality follows from the addition of 
cancelling terms, and the last equality follows from the changes of variables $j:=-i$, $s:=a_l-1+i$ and $t:=a_0-1+i$. 
Then 
\begin{align*}
& (\mathrm{der}^{\mathcal W,(1)}_v\otimes\mathrm{id}+\mathrm{id}\otimes
\mathrm{der}^{\mathcal W,(1)}_v)\circ\Delta^{\mathcal W}(\tilde y_n)
=-(\tilde y_{a_0}\cdots\tilde y_{a_{l-1}}\otimes1+1\otimes \tilde y_{a_0}\cdots\tilde y_{a_{l-1}})\Delta(n+a_l-1)
\\&+\Delta(n+a_0-1)(\tilde y_{a_1}\cdots\tilde y_{a_{l}}\otimes1+1\otimes\tilde y_{a_1}\cdots\tilde y_{a_{l}})
\\&+\sum_{j=0}^{a_l-1} (\tilde y_{a_0}\cdots\tilde y_{a_{l-1}}\tilde y_{a_l-1-j}\otimes\tilde y_{n+j}+\tilde y_{n+j}
\otimes\tilde y_{a_0}\cdots\tilde y_{a_{l-1}}\tilde y_{a_l-1-j})
\\&-\sum_{i=0}^{a_0-1}(\tilde y_{a_0-1-j}\tilde y_{a_1}\cdots\tilde y_{a_{l}}\otimes\tilde y_{n+j}
+\tilde y_{n+j}\otimes\tilde y_{a_0-1-j}\tilde y_{a_1}\cdots\tilde y_{a_{l}}). 
\\&=
-\ell_{a_l-1}(\tilde y_{a_0}\cdots\tilde y_{a_{l-1}}\otimes1+1\otimes\tilde y_{a_0}
\cdots\tilde y_{a_{l-1}})(\tilde y_n)
+r_{a_0-1}(\tilde y_{a_1}\cdots\tilde y_{a_{l}}\otimes1+1\otimes\tilde y_{a_1}
\cdots\tilde y_{a_{l}})(\tilde y_n)
\\&+\sum_{j=0}^{a_l-1}m_{j}(\tilde y_{a_0}\cdots\tilde y_{a_{l-1}}\tilde y_{a_l-1-j})(\tilde y_n)
-\sum_{j=0}^{a_0-1}m_{j}(\tilde y_{a_0-1-j}\tilde y_{a_1}\cdots\tilde y_{a_{l}})(\tilde y_n)
\end{align*}
where the first equality follows from the symmetrization of \eqref{interm:1309} with respect to  the exchange of factors of 
$\mathcal W^{\otimes2}$, together with the identification of the last line of this equation with 
$-(\tilde y_{a_0}\cdots\tilde y_{a_{l-1}}\otimes 1)\Delta(n+a_l-1)+\Delta(n+a_0-1)(\tilde y_{a_1}\cdots\tilde y_{a_l}\otimes 1)$, 
and the second equality follows from \eqref{12151309} and \eqref{12151309bis}. The statement then follows from $\tilde y_a=0$ for $a<0$
and the linearity of $m_j$ for $j\geq0$. \end{proof}

\begin{lem}\label{lem:Delta:der:a:tilde:yn}
For $l\geq 0$, $a_0,\ldots,a_l>0$ with $a_0>1$ if $l=0$, $v:=e_0^{a_0-1}e_1\cdots e_1e_0^{a_l-1}\in\mathcal V_0$ and for $n>0$, 
there holds 
$$
\Delta^{\mathcal W}\circ \mathrm{der}^{\mathcal W,(1)}_v(\tilde y_n)
=\Big(-\ell_{a_l-1}(\Delta^{\mathcal W}(\tilde y_{a_0}\cdots\tilde y_{a_{l-1}}))
+r_{a_0-1}(\Delta^{\mathcal W}(\tilde y_{a_1}\cdots\tilde y_{a_{l}}))\Big)(\tilde y_{n})\in\mathcal W^{\otimes2}. 
$$
\end{lem}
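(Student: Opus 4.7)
The plan is a direct algebraic calculation: apply Lemma~\ref{lem:form:of:der}, then use the fact that $\Delta^{\mathcal W}$ is an algebra morphism (as given by the lemma preceding \eqref{def:Delta:W}), and finally translate back into the notation of the operators $\ell_k$, $r_k$ (the $\mathcal W^{\otimes 2}$-valued analogues of $\tilde\ell_k$, $\tilde r_k$ from \eqref{12151309}, sending $u\in\mathcal W^{\otimes 2}$ and $n>0$ to $u\cdot\Delta(n+k)$ and $\Delta(n+k)\cdot u$ respectively, via componentwise multiplication of $\mathcal W^{\otimes 2}$ on itself).

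First I would apply Lemma~\ref{lem:form:of:der} to rewrite the argument of $\Delta^{\mathcal W}$ as the difference of two products in $\mathcal W$:
\[
\mathrm{der}^{\mathcal W,(1)}_v(\tilde y_n)=\tilde y_{a_0}\cdots\tilde y_{a_{l-1}}\,\tilde y_{a_l-1+n}-\tilde y_{a_0-1+n}\,\tilde y_{a_1}\cdots\tilde y_{a_l}.
\]
Since $\Delta^{\mathcal W}\in\mathrm{Hom}_{\mathbb Q\operatorname{-alg}}(\mathcal W,\mathcal W^{\otimes2})$, applying it factors multiplicatively, producing
\[
\Delta^{\mathcal W}(\tilde y_{a_0}\cdots\tilde y_{a_{l-1}})\,\Delta^{\mathcal W}(\tilde y_{a_l-1+n})-\Delta^{\mathcal W}(\tilde y_{a_0-1+n})\,\Delta^{\mathcal W}(\tilde y_{a_1}\cdots\tilde y_{a_l}).
\]

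Next I would substitute, for the two single-index factors, the identity $\Delta^{\mathcal W}(\tilde y_m)=-\Delta(m)$ coming directly from the definition \eqref{def:f(n)}. The two resulting sign changes give
\[
-\Delta^{\mathcal W}(\tilde y_{a_0}\cdots\tilde y_{a_{l-1}})\,\Delta(n+a_l-1)+\Delta(n+a_0-1)\,\Delta^{\mathcal W}(\tilde y_{a_1}\cdots\tilde y_{a_l}),
\]
which I then read as the value at $n$ (i.e.\ at $\tilde y_n$, using the same convention as in Lemma~\ref{lem:der:der:Delta:tilde:yn}) of the announced combination $-\ell_{a_l-1}(\Delta^{\mathcal W}(\tilde y_{a_0}\cdots\tilde y_{a_{l-1}}))+r_{a_0-1}(\Delta^{\mathcal W}(\tilde y_{a_1}\cdots\tilde y_{a_l}))$.

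There is no genuine obstacle: the entire content is bookkeeping the single sign arising from $\Delta=-\Delta^{\mathcal W}\circ\tilde y_{-}$, which appears in both terms and does not cancel globally. The only thing to verify separately is the degenerate case $l=0$, where $v=e_0^{a_0-1}$ (with $a_0>1$) satisfies $\mathrm{der}^{\mathcal V,(1)}_v(e_0)=[v,e_0]=0$ so both sides vanish (on the right, $\Delta^{\mathcal W}$ of an empty product equals $1\otimes 1$, and the two corresponding terms $-\Delta(n+a_0-1)$ and $+\Delta(n+a_0-1)$ cancel).
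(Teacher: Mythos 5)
Your proposal is correct and follows essentially the same route as the paper's proof: apply Lemma~\ref{lem:form:of:der}, use the algebra morphism property of $\Delta^{\mathcal W}$ together with $\Delta^{\mathcal W}(\tilde y_m)=-\Delta(m)$ from \eqref{def:f(n)}, and recognize the result as the stated combination of $\ell_{a_l-1}$ and $r_{a_0-1}$ evaluated at $\tilde y_n$. Your explicit reading of $\ell_k,r_k$ as the $\mathcal W^{\otimes2}$-valued analogues of $\tilde\ell_k,\tilde r_k$ and your separate check of the $l=0$ case are both sensible additions but do not change the argument.
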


\begin{proof}
One computes 
\begin{align*}
&\Delta^{\mathcal W}\circ \mathrm{der}^{\mathcal W,(1)}_v(\tilde y_n)
=\Delta^{\mathcal W}(\tilde y_{a_0}\cdots\tilde y_{a_{l-1}}\tilde y_{a_l-1+n}
-\tilde y_{a_0-1+n}\tilde y_{a_1}\cdots\tilde y_{a_{l}})
 \\&=-\Delta^{\mathcal W}(\tilde y_{a_0}\cdots\tilde y_{a_{l-1}})\Delta(a_l-1+n)
 +\Delta(a_0-1+n)\Delta^{\mathcal W}(\tilde y_{a_1}\cdots\tilde y_{a_{l}}) 
 \\&=\Big(-\ell_{a_l-1}(\Delta^{\mathcal W}(\tilde y_{a_0}\cdots\tilde y_{a_{l-1}}))
 +r_{a_0-1}(\Delta^{\mathcal W}(\tilde y_{a_1}\cdots\tilde y_{a_{l}}))\Big)(\tilde y_{n}). 
\end{align*}
where the first equality follows from Lemma \ref{lem:form:of:der}, the second equality follows from the algebra 
morphism property of $\Delta^{\mathcal W}$ and \eqref{def:f(n)}, and the last equality follows from \eqref{12151309}. 
\end{proof}

\begin{prop}\label{prop:decomp}\label{prop:upper:comm:triangle}
The map $-\cdot\Delta^{\mathcal W} : \mathcal V_0\to\mathrm{Der}_{\Delta^{\mathcal W}}(\mathcal W,\mathcal W^{\otimes2})$ 
(see \eqref{act:on:Delta:1309}) is equal to the composed map 
$$
\mathcal V_0
\stackrel{\mathbf H}{\to}(\oplus_{k\geq0}\mathcal W^{\otimes2})^{\oplus 2}\oplus
(\oplus_{i\geq0}\mathcal W)
\stackrel{\mathbf h}{\to}
\mathrm{Map}(\mathbb Z_{>0},\mathcal W^{\otimes2})
\stackrel{\mathbf i}{\to}\mathrm{Der}_{\Delta^{\mathcal W}}(\mathcal W,\mathcal W^{\otimes2})
$$
\end{prop}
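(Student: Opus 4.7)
The plan is to reduce the identity to a computation on a basis, then to reduce to evaluation at the generators $(\tilde y_n)_{n>0}$, and finally to identify both sides using the two preparatory computations already established in Lemmas \ref{lem:der:der:Delta:tilde:yn} and \ref{lem:Delta:der:a:tilde:yn}.

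First, all four maps in question ($-\cdot\Delta^{\mathcal W}$, $\mathbf H$, $\mathbf h$, $\mathbf i$) are $\mathbb Q$-linear, so it suffices to verify the equality $(\mathbf i\circ\mathbf h\circ\mathbf H)(v)=v\cdot\Delta^{\mathcal W}$ for $v$ ranging over the PBW-type basis \eqref{basis:of:V}, i.e.\ $v=e_0^{a_0-1}e_1\cdots e_1e_0^{a_l-1}$. Next, by Corollary \ref{corA}, the left-hand side already lies in $\mathrm{Der}_{\Delta^{\mathcal W}}(\mathcal W,\mathcal W^{\otimes2})$, and by construction so does the right-hand side. Lemma \ref{lem46:1401} (b) tells us that a $\Delta^{\mathcal W}$-derivation is uniquely determined by its values on $\tilde y_n$ for $n>0$; thus the equality is reduced to checking
\[
(v\cdot\Delta^{\mathcal W})(\tilde y_n)=\mathbf h(\mathbf H(v))(n)\qquad\text{in }\mathcal W^{\otimes2}
\]
for every $n>0$, since $\mathbf i$ takes $\mathbf h(\mathbf H(v))\in\mathrm{Map}(\mathbb Z_{>0},\mathcal W^{\otimes2})$ to the derivation whose value at $\tilde y_n$ is $\mathbf h(\mathbf H(v))(n)$.

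For the left-hand side, the defining formula \eqref{act:on:Delta:1309} gives $(v\cdot\Delta^{\mathcal W})(\tilde y_n) = (\mathrm{der}^{\mathcal W,(1)}_v\otimes\mathrm{id}+\mathrm{id}\otimes\mathrm{der}^{\mathcal W,(1)}_v)\circ\Delta^{\mathcal W}(\tilde y_n)-\Delta^{\mathcal W}\circ\mathrm{der}^{\mathcal W,(1)}_v(\tilde y_n)$, and the two terms are exactly what Lemmas \ref{lem:der:der:Delta:tilde:yn} and \ref{lem:Delta:der:a:tilde:yn} compute. Subtracting, the contributions of the form $\ell_{a_l-1}(\tilde y_{a_0}\cdots\tilde y_{a_{l-1}}\otimes 1+1\otimes\tilde y_{a_0}\cdots\tilde y_{a_{l-1}})$ and its $r_{a_0-1}$-counterpart combine with $-\ell_{a_l-1}(\Delta^{\mathcal W}(\tilde y_{a_0}\cdots\tilde y_{a_{l-1}}))$ and $+r_{a_0-1}(\Delta^{\mathcal W}(\tilde y_{a_1}\cdots\tilde y_{a_l}))$ to produce exactly $\ell_{a_l-1}$ applied to $(\Delta^{\mathcal W}-\mathrm{id}\otimes 1-1\otimes\mathrm{id})(\tilde y_{a_0}\cdots\tilde y_{a_{l-1}})$, minus the analogous expression for $r_{a_0-1}$, plus the sum $\sum_{j\geq0}m_j(\tilde y_{a_0}\cdots\tilde y_{a_{l-1}}\tilde y_{a_l-1-j}-\tilde y_{a_0-1-j}\tilde y_{a_1}\cdots\tilde y_{a_l})(\tilde y_n)$.

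For the right-hand side, I unwind the definitions: using Definition \ref{def:map:1409}, $\mathbf H(v)=(({{L}}_{k+1}(v))_{k\geq0},({{R}}_{k+1}(v))_{k\geq0},({{M}}_{i+1}(v))_{i\geq0})$, and the Kronecker deltas in \eqref{eq:beg:1309} and \eqref{eq:end:1309} collapse the corresponding sums in \eqref{explicit:map'} to the single contributions with $k=a_l-1$ and $k=a_0-1$ respectively. Reindexing $j=i$ in the $m$-sum and using the identifications $\tilde\ell_{a_l-1}(w)(n)=w\,\Delta(n+a_l-1)$ and $\tilde r_{a_0-1}(w)(n)=\Delta(n+a_0-1)w$ of \eqref{12151309} (read with $w\in\mathcal W^{\otimes2}$, as in the explicit formula for $\mathbf h$), one recognises term-by-term the expression obtained above. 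The whole proof is essentially bookkeeping; the one point requiring attention is matching signs and the Kronecker-delta collapses, which is why Definitions \ref{def:map:1409} and \ref{def:map':1409} were set up with exactly the shifts $k+1$, $i+1$ and the sign in $-\tilde r_k$ that make this identification automatic.
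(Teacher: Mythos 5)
Your proposal is correct and follows essentially the same route as the paper's own proof: reduce by linearity to the basis \eqref{basis:of:V}, observe that both sides are $\Delta^{\mathcal W}$-derivations and hence determined by their values on the generators $\tilde y_n$ (equivalently, conclude via the injectivity of $\mathbf i$ from Lemma \ref{lem46:1401}), and then match those values using Lemmas \ref{lem:der:der:Delta:tilde:yn} and \ref{lem:Delta:der:a:tilde:yn} against the unwound definitions of $\mathbf H$ and $\mathbf h$ with their Kronecker-delta collapses. The only quibble is a harmless swap of attributions: it is $v\cdot\Delta^{\mathcal W}$ that lies in $\mathrm{Der}_{\Delta^{\mathcal W}}(\mathcal W,\mathcal W^{\otimes2})$ by Corollary \ref{corA}, while $\mathbf i\circ\mathbf h\circ\mathbf H(v)$ does so by construction of $\mathbf i$.
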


\begin{proof} 
Let $n>0$ and let $l\geq 0$, $a_0,\ldots,a_l>0$ with $a_0>1$ if $l=0$, and $v:=e_0^{a_0-1}e_1\cdots e_1e_0^{a_l-1}\in\mathcal V_0$. Then   
\begin{align}\label{interm:14391309}
& (v\cdot\Delta^{\mathcal W})(\tilde y_n)=(\mathrm{der}^{\mathcal W,(1)}_v\otimes\mathrm{id}+\mathrm{id}\otimes
\mathrm{der}^{\mathcal W,(1)}_v)\circ\Delta^{\mathcal W}(\tilde y_n)
-\Delta^{\mathcal W}\circ \mathrm{der}^{\mathcal W,(1)}_v(\tilde y_n)
\\&\nonumber 
=\sum_{j\geq0}m_{j}(\tilde y_{a_0}\cdots\tilde y_{a_{l-1}}\tilde y_{a_l-1-j}
-\tilde y_{a_0-1-j}\tilde y_{a_1}\cdots\tilde y_{a_{l}})(\tilde y_n)
\\&\nonumber  +\Big(\ell_{a_l-1}((\Delta^{\mathcal W}-\mathrm{id}\otimes1-1\otimes\mathrm{id})(\tilde y_{a_0}\cdots\tilde y_{a_{l-1}}))
-r_{a_0-1}((\Delta^{\mathcal W}-\mathrm{id}\otimes1-1\otimes\mathrm{id})(\tilde y_{a_1}\cdots\tilde y_{a_{l}}))\Big)(\tilde y_{n})
\end{align}
where the first equality follows from \eqref{act:on:Delta:1309} and the second equality from Lemmas \ref{lem:der:der:Delta:tilde:yn} 
and \ref{lem:Delta:der:a:tilde:yn} and from the linearity of $\ell_{a_l-1}$ and $r_{a_0-1}$. Using 
\eqref{eq:beg:1309}, \eqref{eq:end:1309}, \eqref{eq:mix:1309} and ${{L}}_k(v)=\delta_{k,a_l}\tilde y_{a_0}\cdots\tilde y_{a_{l-1}}$, ${{R}}_k(v)=\delta_{k,a_0}\tilde y_{a_1}\cdots\tilde y_{a_l}$, this implies  
\begin{equation}\label{07541309}
(v\cdot\Delta^{\mathcal{W}})(\tilde y_n)=\sum_{k\geq 0}\ell_k\circ{{L}}_{k+1}(v)(\tilde y_n)-\sum_{k\geq 0}r_k\circ {{R}}_{k+1}(v)(\tilde y_n)+\sum_{i\geq 0}m_i
\circ{{M}}_{i+1}(v)(\tilde y_n)\in\mathcal W^{\otimes2}. 
\end{equation}
For each $n>0$, both sides of this identity depend linearly on $v\in\mathcal V_0$, and since this identity is 
fulfilled for any $v$ in the family \eqref{basis:of:V}, which is a basis of $\mathcal V_0$, it holds for any 
$v\in\mathcal V_0$. Then for $v\in\mathcal V_0$, $v\cdot\Delta^{\mathcal{W}}$ and $\sum_{k\geq 0}\ell_k\circ{{L}}_{k+1}(v)
-\sum_{k\geq 0}r_k\circ {{R}}_{k+1}(v)+\sum_{i\geq 0}m_i\circ{{M}}_{i+1}(v)$ are elements of 
$\mathrm{Der}_{\Delta^{\mathcal{W}}}(\mathcal{W},\mathcal{W}^{\otimes2})$, and their images by the map 
$\mathbf i  : \mathrm{Der}_{\Delta^{\mathcal{W}}}(\mathcal{W},\mathcal{W}^{\otimes2})\to\mathrm{Map}(\mathbb Z_{>0},\mathcal W^{\otimes2})$
are the maps taking $n>0$ to respectively the left and right hand sides of \eqref{07541309}. By \eqref{07541309}, these images are equal, and 
the injectivity of $\mathbf i$ (see Lemma \ref{lem46:1401}) then implies the equality 
\begin{equation}\label{07471309}
\forall v\in\mathcal V,\quad v\cdot\Delta^{\mathcal{W}}=\sum_{k\geq 0}\ell_k\circ{{L}}_{k+1}(v)-\sum_{k\geq 0}r_k\circ {{R}}_{k+1}(v)+\sum_{i\geq 0}m_i
\circ{{M}}_{i+1}(v)
\in\mathrm{Der}_{\Delta^{\mathcal{W}}}(\mathcal{W},\mathcal{W}^{\otimes2}), 
\end{equation}
which by Definitions \ref{def:map:1409} and \ref{def:map':1409} implies the statement. 
\end{proof}

\section{Study of the constituents $\mathbf h$ and $\mathbf H$ of the map $\mathcal V_0\to\mathrm{Der}_{\Delta^{\mathcal W}}(\mathcal W,\mathcal W^{\otimes2})$}\label{sect:5:2501}

In this section, we study the maps $\mathbf h$ and $\mathbf H$. The first main result is the computation of $\mathrm{Ker}(\mathbf h)$. To obtain 
it, one first defines endomorphisms and degrees on the algebras $\mathcal W$ and $\mathcal W^{\otimes2}$ (\S\ref{441:1303}). These are used to 
construct sequences which are shown to be convergent in the discrete topology (i.e. eventually constant) in \S\ref{442:1303}. The opposite 
analogues of the results of \S\S\ref{441:1303} and \ref{442:1303} are obtained in \S\ref{subsub:opposite}, and  the results of 
\S\S\ref{441:1303}-\ref{subsub:opposite} are put together in \S\ref{subsub:comp:ker:h} to obtain the computation of $\mathrm{Ker}(\mathbf h)$ 
(Proposition \ref{descr:map'}). The second main result is a commutative square relating $\mathbf H$ with $\Delta^{\mathcal M}$ 
(\S\ref{subsub:comm:square}, Proposition \ref{comm:square}).  

\subsection{The endomorphisms $\partial_n$ and degrees $\mathrm{deg}$, $\mathrm{deg}^{(1)}$ and $\mathrm{deg}^{(2)}$} \label{441:1303}

\begin{lem}\label{id:partial:2908}
(a) For each $n>0$, there is a linear endomorphism $\partial_n$ of $\mathcal W$, uniquely determined by the conditions $\partial_n(a\tilde y_m)=\delta_{nm}a$ for any $a\in\mathcal W$ and $m>0$, and $\partial_n(1)=0$.   

(b) One has 
$$
\forall a,b\in\mathcal W,\quad \partial_n(ab)=a\partial_n(b)+\partial_n(a)\epsilon(b).
$$
 where $\epsilon:\mathcal W\to\mathbb Q$ is the projection of $\mathcal W$ on its degree 0 part.
\end{lem}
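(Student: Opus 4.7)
The plan for part (a) is to exploit the fact, recalled earlier in the paper, that $\mathcal W$ is the free associative $\mathbb Q$-algebra on the generators $(\tilde y_a)_{a>0}$. A $\mathbb Q$-basis of $\mathcal W$ is then given by the words $\tilde y_{n_1}\cdots\tilde y_{n_l}$ for $l\geq 0$ and $n_1,\ldots,n_l>0$, the case $l=0$ corresponding to the unit $1$. I would define $\partial_n$ on this basis by $\partial_n(1):=0$ and $\partial_n(\tilde y_{n_1}\cdots\tilde y_{n_l}):=\delta_{n,n_l}\tilde y_{n_1}\cdots\tilde y_{n_{l-1}}$ for $l\geq 1$, and extend by linearity; this produces a well-defined element of $\mathrm{End}_{\mathbb Q\operatorname{-vec}}(\mathcal W)$. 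To check that $\partial_n(a\tilde y_m)=\delta_{nm}a$ for a general $a\in\mathcal W$, one expands $a$ in the basis: each basis word multiplied on the right by $\tilde y_m$ is again a basis word (with terminal letter $\tilde y_m$), so the formula reduces to the definition and bilinearity. Uniqueness is immediate, because every basis element is either $1$ (on which the value is prescribed) or of the form $w\tilde y_{n_l}$ for $w$ a basis word (on which the value is forced by $\partial_n(a\tilde y_m)=\delta_{nm}a$).

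For part (b), the plan is to reduce by bilinearity to the case where $a$ and $b$ are basis words, and then split according to the length of $b$. If $b=1$, then $\epsilon(b)=1$ and $\partial_n(b)=0$, and both sides equal $\partial_n(a)$. If $b=\tilde y_{m_1}\cdots\tilde y_{m_k}$ with $k\geq 1$, then $\epsilon(b)=0$ since $b$ is homogeneous of positive degree, so the right-hand side is $a\partial_n(b)=\delta_{n,m_k}\,a\tilde y_{m_1}\cdots\tilde y_{m_{k-1}}$. On the left, write $ab=a'\tilde y_{m_k}$ with $a':=a\tilde y_{m_1}\cdots\tilde y_{m_{k-1}}\in\mathcal W$; then (a) gives $\partial_n(ab)=\delta_{n,m_k}a'$, which matches.

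There is no genuine obstacle here: the argument is pure free-algebra bookkeeping. The content of the lemma is simply that $\partial_n$ is the ``right-end'' partial derivative that plucks off a terminal letter $\tilde y_n$, and the identity in (b) records the corresponding one-sided Leibniz rule, namely an ordinary derivation in the right factor combined with the augmentation $\epsilon$ in the left factor, which accounts for the asymmetry of this right-end convention.
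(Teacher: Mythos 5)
Your proof is correct and follows essentially the same route as the paper: both use the vector space basis of $\mathcal W$ given by words in the free generators $(\tilde y_n)_{n>0}$ to define $\partial_n$ and establish uniqueness in (a), and both verify the identity in (b) on basis elements and conclude by linearity. You merely spell out the details (the explicit formula for $\partial_n$ on words and the case split on the length of $b$) that the paper's terse proof leaves to the reader.
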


\begin{proof} (a) As the algebra $\mathcal W$ is freely generated by the family $(\tilde y_n)_{n>0}$, a basis is given by the set of all the words in these elements. 
For each $n>0$, the conditions uniquely determine the image of this basis, which determines $\partial_n$ uniquely. (b) can be checked for $a,b$ elements of this basis, which  by linearity implies its validity for any $a,b$. 
\end{proof}

\begin{lemdef}\label{lemdef:09100609}
(a) One has $\mathcal W=\mathbb Q\oplus (\oplus_{k>0}\mathcal W\tilde y_k)$.

(b) For $a\in\mathcal W$, one defines $\mathrm{deg}(a):=\min\{d\geq0|a\in\mathbb Q\oplus (\oplus_{k=1}^d\mathcal W\tilde y_l)\}\in\mathbb Z_{\geq 0}$.
\end{lemdef}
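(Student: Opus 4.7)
The plan is to exploit the fact, already recalled in the proof of Lemma \ref{lem46:1401}(a), that $\mathcal W$ is freely generated as a $\mathbb Q$-algebra by the family $(\tilde y_n)_{n>0}$, so the set of words $\mathcal B:=\{\tilde y_{n_1}\cdots\tilde y_{n_l}\mid l\geq0,\ n_1,\ldots,n_l>0\}$ (with the empty word $=1$ when $l=0$) is a $\mathbb Q$-basis of $\mathcal W$. Everything reduces to a bookkeeping of the last letter of such a word.

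For part (a), I would first partition $\mathcal B$ as $\{1\}\sqcup\bigsqcup_{k>0}\mathcal B_k$, where $\mathcal B_k$ consists of those basis words whose last letter is $\tilde y_k$. By definition of left multiplication in $\mathcal W$, $\mathcal B_k$ is exactly the image of $\mathcal B\cdot\tilde y_k$, and $\mathcal B_k$ is a $\mathbb Q$-basis of $\mathcal W\tilde y_k$ (freeness of $\mathcal W$ guarantees that the right multiplication $-\cdot\tilde y_k:\mathcal W\to\mathcal W\tilde y_k$ is injective). Since $\{1\}\sqcup\bigsqcup_{k>0}\mathcal B_k=\mathcal B$ is a basis of $\mathcal W$, the sum $\mathbb Q\oplus\bigoplus_{k>0}\mathcal W\tilde y_k$ is direct and equals all of $\mathcal W$, giving (a).

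For part (b), I use (a) to write any $a\in\mathcal W$ uniquely as $a=\lambda+\sum_{k>0}a_k\tilde y_k$ with $\lambda\in\mathbb Q$, $a_k\in\mathcal W$, and $\{k\mid a_k\neq0\}$ finite (finiteness follows from the fact that $a$ is a finite $\mathbb Q$-linear combination of elements of $\mathcal B$, each involving only finitely many values of the last letter). Set $d_0:=\max(\{0\}\cup\{k>0\mid a_k\neq0\})\in\mathbb Z_{\geq0}$. Then $a\in\mathbb Q\oplus\bigoplus_{k=1}^{d_0}\mathcal W\tilde y_k$, and by directness of the decomposition in (a), $a\notin\mathbb Q\oplus\bigoplus_{k=1}^{d}\mathcal W\tilde y_k$ for any $d<d_0$. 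Thus the set $\{d\geq0\mid a\in\mathbb Q\oplus\bigoplus_{k=1}^{d}\mathcal W\tilde y_k\}$ is nonempty with minimum $d_0$, so the definition of $\mathrm{deg}(a)$ makes sense.

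No step here is a real obstacle: both parts reduce to the observation that freeness of $\mathcal W$ on the generators $(\tilde y_n)_{n>0}$ propagates to a free right $\mathcal W$-module structure on each $\mathcal W\tilde y_k$ and to a clean decomposition of basis words by their last letter. The only minor care needed is to justify that only finitely many $a_k$ are nonzero for a given $a$, which is immediate from the finite support of any vector written in the basis $\mathcal B$.
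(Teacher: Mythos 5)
Your proof is correct and follows essentially the same route as the paper: part (a) is deduced from the freeness of $\mathcal W$ on the generators $(\tilde y_n)_{n>0}$ (you merely make explicit the partition of the word basis by last letter), and part (b) is justified by observing that the subspaces $\mathbb Q\oplus(\oplus_{k=1}^d\mathcal W\tilde y_k)$ form an increasing family exhausting $\mathcal W$, so the minimum defining $\mathrm{deg}(a)$ exists. Your additional identification of $\mathrm{deg}(a)$ as the largest $k$ with $a_k\neq0$ is a harmless refinement of the paper's argument.
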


\begin{proof}
(a) follows from the fact that $\mathcal W$ is freely generated by the family $(\tilde y_n)_{n>0}$. It implies that the family 
$(\mathbb Q\oplus (\oplus_{k=1}^d\mathcal W\tilde y_k))_{d\geq0}$ is an increasing sequence of subsets of $\mathcal W$ whose union is 
$\mathcal W$, %$\mathcal W=\bigcup_{d\geq0}(\mathbb Q\oplus (\oplus_{l=1}^d\mathcal W\tilde y_l))$, 
which justifies the definition (b).  
\end{proof}

\begin{lem}\label{lem:1725:2908}
For $a\in\mathcal W$,  one has $\partial_n(a)=0$ for any $n>\mathrm{deg}(a)$.
\end{lem}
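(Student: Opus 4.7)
The plan is to unpack both definitions and observe that the conclusion is essentially tautological once the decomposition from Lemma-Definition \ref{lemdef:09100609}(a) is combined with the defining formula for $\partial_n$.

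First I would fix $a \in \mathcal{W}$ and set $d := \mathrm{deg}(a)$. By the definition of $\mathrm{deg}$, we may write
$$
a = c_0 + \sum_{k=1}^{d} a_k \tilde y_k
$$
for some $c_0 \in \mathbb{Q}$ and $a_1,\ldots,a_d \in \mathcal{W}$, using the direct sum decomposition $\mathcal{W} = \mathbb{Q}\oplus(\oplus_{k>0}\mathcal{W}\tilde y_k)$.

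Next I would apply $\partial_n$ to this expression. By linearity, together with $\partial_n(1)=0$ (so $\partial_n(c_0)=0$ for $c_0 \in \mathbb{Q}$) and the defining property $\partial_n(a_k \tilde y_k) = \delta_{nk} a_k$ from Lemma \ref{id:partial:2908}(a), one gets
$$
\partial_n(a) = \sum_{k=1}^{d} \delta_{nk}\, a_k.
$$
If $n > d$, then $\delta_{nk} = 0$ for every $k \in \{1,\ldots,d\}$, and hence $\partial_n(a) = 0$, as desired.

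There is no real obstacle here: the statement is a direct consequence of the definitions, and the only thing worth verifying carefully is that the decomposition of $a$ truly has no terms $a_k\tilde y_k$ with $k > d$, which is precisely what the minimality in the definition of $\mathrm{deg}(a)$ guarantees.
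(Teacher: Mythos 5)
Your proof is correct and follows the same route as the paper: both use the membership $a\in\mathbb Q\oplus(\oplus_{k=1}^{\mathrm{deg}(a)}\mathcal W\tilde y_k)$ guaranteed by the definition of $\mathrm{deg}$ and then observe that $\partial_n$ kills each summand when $n>\mathrm{deg}(a)$; you merely write out the evaluation $\partial_n(a)=\sum_{k=1}^{d}\delta_{nk}a_k$ that the paper summarizes as ``the restriction of $\partial_n$ to this space is $0$.'' (One small remark: what you need is that $a$ lies in the space indexed by $d=\mathrm{deg}(a)$, which follows from the minimum being attained, not from minimality per se.)
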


\begin{proof} One has $a\in\mathbb Q\oplus (\oplus_{k=1}^{\mathrm{deg}(a)}\mathcal W\tilde y_k)$, and the restriction of $\partial_n$ to this space is $0$ for any $n>\mathrm{deg}(a)$. 
\end{proof}

\begin{lemdef}\label{lemdef:09092021}
(a) One has $\mathcal W^{\otimes2}=\mathbb Q\otimes\mathcal W\oplus(\oplus_{k>0}\mathcal W\tilde y_k\otimes\mathcal W)=\mathcal W\otimes\mathbb Q\oplus (\oplus_{k>0}\mathcal W\otimes\mathcal W\tilde y_k)$. 

(b) For $a\in\mathcal W^{\otimes2}$, one defines $\mathrm{deg}^{(1)}(a):=\min\{d\geq0|a\in
\mathbb Q\otimes\mathcal W\oplus(\oplus_{k=1}^d\mathcal W\tilde y_k\otimes\mathcal W)\}\in\mathbb Z_{\geq 0}$ and  $\mathrm{deg}^{(2)}(a):=\min\{d\geq0|a\in
\mathcal W\otimes\mathbb Q\oplus(\oplus_{k=1}^d\mathcal W\otimes\mathcal W\tilde y_k)\}\in\mathbb Z_{\geq 0}$.
\end{lemdef}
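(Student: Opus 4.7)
The plan is to derive Lemma-Definition \ref{lemdef:09092021} as an immediate consequence of Lemma-Definition \ref{lemdef:09100609} by tensoring the latter decomposition of $\mathcal W$ with $\mathcal W$ on the appropriate side.

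For part (a), I would start from the decomposition $\mathcal W=\mathbb Q\oplus(\oplus_{k>0}\mathcal W\tilde y_k)$ of Lemma-Definition \ref{lemdef:09100609}(a) and tensor on the right (resp. left) with $\mathcal W$. Using the compatibility of tensor products with direct sums, one obtains
\[
\mathcal W^{\otimes2}=(\mathbb Q\otimes\mathcal W)\oplus\Bigl(\bigoplus_{k>0}(\mathcal W\tilde y_k\otimes\mathcal W)\Bigr)
\]
and symmetrically
\[
\mathcal W^{\otimes2}=(\mathcal W\otimes\mathbb Q)\oplus\Bigl(\bigoplus_{k>0}(\mathcal W\otimes\mathcal W\tilde y_k)\Bigr),
\]
which is the content of (a).

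For part (b), I would argue as in the proof of Lemma-Definition \ref{lemdef:09100609}(b): each of the two decompositions in (a) exhibits $\mathcal W^{\otimes2}$ as the union of the increasing chain of subspaces $\mathbb Q\otimes\mathcal W\oplus(\oplus_{k=1}^d\mathcal W\tilde y_k\otimes\mathcal W)$ (resp. $\mathcal W\otimes\mathbb Q\oplus(\oplus_{k=1}^d\mathcal W\otimes\mathcal W\tilde y_k)$) indexed by $d\geq 0$. Hence for any $a\in\mathcal W^{\otimes2}$, the sets $\{d\geq0\mid a\in\mathbb Q\otimes\mathcal W\oplus(\oplus_{k=1}^d\mathcal W\tilde y_k\otimes\mathcal W)\}$ and $\{d\geq0\mid a\in\mathcal W\otimes\mathbb Q\oplus(\oplus_{k=1}^d\mathcal W\otimes\mathcal W\tilde y_k)\}$ are nonempty subsets of $\mathbb Z_{\geq 0}$, so each admits a minimum, which justifies the definitions of $\mathrm{deg}^{(1)}(a)$ and $\mathrm{deg}^{(2)}(a)$.

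No step here is expected to be an obstacle: the whole statement is a formal consequence of the freely generated structure of $\mathcal W$ via $(\tilde y_n)_{n>0}$ already established in Lemma-Definition \ref{lemdef:09100609}. The only mild subtlety is to keep track of which factor of the tensor product is being decomposed, but this is what the superscripts $(1)$ and $(2)$ are designed to record.
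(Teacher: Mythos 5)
Your proposal is correct and follows essentially the same route as the paper: part (a) is obtained by tensoring the decomposition of Lemma-Definition \ref{lemdef:09100609}(a) with $\mathcal W$ on the appropriate side, and part (b) is justified by observing that the resulting filtrations form increasing chains of subspaces whose union is $\mathcal W^{\otimes2}$, so the relevant sets of integers are nonempty and admit minima.
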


\begin{proof}
(a) follows from Lemma-Definition \ref{lemdef:09100609}, (a). It implies that 
$(\mathbb Q\otimes\mathcal W\oplus (\oplus_{k=1}^d\mathcal W\tilde y_k\otimes\mathcal W))_{d\geq0}$ and 
$(\mathcal W\otimes\mathbb Q\oplus (\oplus_{k=1}^d\mathcal W\otimes\mathcal W\tilde y_k))_{d\geq0}$ both are increasing 
sequences of subsets of $\mathcal W^{\otimes2}$ with union $\mathcal W^{\otimes2}$, 
which justifies the definition in (b).  
\end{proof}

\begin{lem}\label{17002908}
For any $a\in\mathcal W^{\otimes2}$, one has 
$$
(\partial_n\otimes\mathrm{id})(a)=0\quad \text{for}\quad n>\mathrm{deg}^{(1)}(a)\quad\text{and}\quad (\mathrm{id}\otimes\partial_n)(a)=0\quad\text{for}\quad n>\mathrm{deg}^{(2)}(a). 
$$
\end{lem}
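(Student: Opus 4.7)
The plan is to establish the first assertion by direct verification on each summand of the decomposition in Lemma-Definition \ref{lemdef:09092021} (a), and then to deduce the second one from the symmetric argument. Set $d := \mathrm{deg}^{(1)}(a)$. By the definition of $\mathrm{deg}^{(1)}$, the element $a$ lies in $\mathbb Q\otimes\mathcal W \oplus \bigoplus_{k=1}^d \mathcal W\tilde y_k\otimes\mathcal W$, so it may be written as a finite sum of elements of the form $1\otimes b_0$ and $c\tilde y_k\otimes b$ with $c,b,b_0\in\mathcal W$ and $1\leq k\leq d$. By linearity of $\partial_n\otimes\mathrm{id}$, it suffices to show that this operator annihilates each such summand as soon as $n>d$.

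For a summand of the form $1\otimes b_0$ the vanishing is immediate from Lemma \ref{id:partial:2908} (a), since $\partial_n(1)=0$. For a summand of the form $c\tilde y_k\otimes b$ with $1\leq k\leq d<n$, the defining property $\partial_n(a'\tilde y_m)=\delta_{nm}a'$ from Lemma \ref{id:partial:2908} (a) gives $\partial_n(c\tilde y_k)=\delta_{nk}c=0$, whence $(\partial_n\otimes\mathrm{id})(c\tilde y_k\otimes b)=0$. Summing, one obtains $(\partial_n\otimes\mathrm{id})(a)=0$, which is the first claim.

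The second assertion is proved by the entirely parallel argument, using instead the second decomposition $\mathcal W^{\otimes2} = \mathcal W\otimes\mathbb Q \oplus \bigoplus_{k>0}\mathcal W\otimes\mathcal W\tilde y_k$ from Lemma-Definition \ref{lemdef:09092021} (a), the degree $\mathrm{deg}^{(2)}$, and the operator $\mathrm{id}\otimes\partial_n$ in place of $\partial_n\otimes\mathrm{id}$; the same two cases $b\otimes 1$ and $b\otimes c\tilde y_k$ are handled by the same applications of Lemma \ref{id:partial:2908} (a). There is no genuine obstacle: the statement is exactly the tensor-factor analogue of Lemma \ref{lem:1725:2908}, and reduces to its repeated application once $a$ is written along the relevant direct sum.
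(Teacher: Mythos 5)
Your proof is correct and follows essentially the same route as the paper's: both decompose $a$ along the direct sum $\mathbb Q\otimes\mathcal W\oplus\bigoplus_{k=1}^{d}\mathcal W\tilde y_k\otimes\mathcal W$ furnished by the definition of $\mathrm{deg}^{(1)}$ and observe that $\partial_n$ kills the first tensor factors once $n>d$, then argue symmetrically for $\mathrm{deg}^{(2)}$. You merely spell out on elementary tensors what the paper states as the vanishing of the restriction of $\partial_n\otimes\mathrm{id}$ to that subspace.
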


\begin{proof}
One has $a\in(\mathbb Q\oplus(\oplus_{k=1}^{\mathrm{deg}^{(1)}(a)}\mathcal W\tilde y_k))\otimes\mathcal W$, and the restriction of $\partial_n\otimes\mathrm{id}$ to this space is $0$ for $n>\mathrm{deg}^{(1)}(a)$. 
Similarly, $a\in\mathcal W\otimes(\mathbb Q\oplus(\oplus_{k=1}^{\mathrm{deg}^{(2)}(a)}\mathcal W\tilde y_k))$, and the restriction of   $\mathrm{id}\otimes\partial_n$ to this space is $0$ for $n>\mathrm{deg}^{(2)}(a)$. 
\end{proof}

\subsection{Convergence results in the discrete topology}\label{442:1303}

Recall that any set $S$ can be equipped with its {\it discrete topology}. A sequence $(s_n)_{n>0}$ with values in $S$ is {\it convergent} 
in this topology iff there exists $s\in S$, such that $s_n=s$ for all but finitely many values of $n$. If $(s_n)_{n>0}$ is convergent, an element 
$s\in S$ with this property is necessarily unique and called the {\it limit} of $(s_n)_{n>0}$. 

\begin{lem}\label{lem:18192908}
If $(\underline a,\underline b,\underline z)\in(\oplus_{l\geq0}\mathcal W^{\otimes2})^{\oplus 2}\oplus(\oplus_{i\geq0}\mathcal W)$, then for any 
$k\geq0$, the sequence $\mathbb Z_{>0}\ni n\mapsto 
(\partial_n\otimes\partial_{n+k})(\mathbf h(\underline a,\underline b,\underline z)(2n))\in\mathcal{W}^{\otimes2}$ is convergent in the discrete 
topology of $\mathcal W^{\otimes2}$, with limit $a_k-\epsilon^{\otimes2}(b_k)1^{\otimes2}$, where $\underline a=(a_l)_{l\geq0}$, $\underline 
b=(b_l)_{l\geq0}$.  
\end{lem}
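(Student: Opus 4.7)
The plan is a direct term-by-term analysis of $\mathbf{h}(\underline a,\underline b,\underline z)(2n)$ via the explicit formula \eqref{explicit:map'}, using the degree-based vanishing $\partial_m(x)=0$ for $m>\mathrm{deg}(x)$ (Lemma \ref{lem:1725:2908}), together with its tensor analogues (Lemma \ref{17002908}). Since $(\underline a,\underline b,\underline z)$ lies in direct sums, only finitely many $a_l,b_l,z_i$ are nonzero, with bounded $\mathrm{deg}^{(\bullet)}$. I fix $n_0>k$ exceeding all these degrees, and aim to show that the sequence takes the constant value $a_k-\epsilon^{\otimes 2}(b_k)1^{\otimes 2}$ for $n\geq n_0$.

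Next I handle each of the three types of summands in \eqref{explicit:map'}. The $\underline z$-summand vanishes: for $n\geq n_0$, $(\partial_n\otimes\partial_{n+k})(z_i\otimes\tilde y_{2n+i})=0$ via $\partial_n(z_i)=0$, while $(\partial_n\otimes\partial_{n+k})(\tilde y_{2n+i}\otimes z_i)=0$ via $\partial_n(\tilde y_{2n+i})=\delta_{n,2n+i}=0$. For the $\underline a$-summand $a_l\Delta(2n+l)$, I expand $\Delta(2n+l)=\sum_{i=0}^{2n+l}\tilde y_i\otimes\tilde y_{2n+l-i}$, decompose $a_l=\sum_j u_j\otimes v_j$, and apply Leibniz (Lemma \ref{id:partial:2908}(b)) to each factor. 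Since $\partial_n(u_j)=\partial_{n+k}(v_j)=0$ for $n\geq n_0$, only the terms $u_j\partial_n(\tilde y_i)\otimes v_j\partial_{n+k}(\tilde y_{2n+l-i})$ survive. Using $\partial_n(\tilde y_i)=\delta_{n,i}$ and $\partial_{n+k}(\tilde y_{2n+l-i})=\delta_{n+k,2n+l-i}$, the surviving index is $i=n$ and $l=k$, contributing exactly $a_k$. The $\underline b$-summand $\Delta(2n+l)b_l$ is treated symmetrically: Leibniz now places $\partial_n,\partial_{n+k}$ on the $\tilde y$-factors and evaluates the $b_l$-factors through $\epsilon$, giving $\epsilon^{\otimes 2}(b_l)1^{\otimes 2}$ with the same constraint $l=k$.

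Collecting the three contributions with signs from \eqref{explicit:map'} yields $a_k-\epsilon^{\otimes 2}(b_k)1^{\otimes 2}$, so the sequence is eventually constant and hence converges in the discrete topology to the claimed limit. The main technical task is the careful Leibniz and index bookkeeping against the degree-vanishing machinery of \S\ref{441:1303}; the only mild subtlety is the $i=0$ and $i=2n+l$ boundary terms in $\Delta(2n+l)$, where $\tilde y_0=-1$, but these automatically contribute $0$ because $\partial_m(1)=0$.
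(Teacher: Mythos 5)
Your argument is correct and follows essentially the same route as the paper's proof: both apply $\partial_n\otimes\partial_{n+k}$ termwise to the explicit formula \eqref{explicit:map'}, use the degree bounds of Lemmas \ref{lem:1725:2908} and \ref{17002908} together with the Leibniz rule of Lemma \ref{id:partial:2908} to kill the $\underline z$-terms and reduce the $\underline b$-terms to $\epsilon^{\otimes2}(b_k)1^{\otimes2}$, and extract $a_k$ from the $\underline a$-terms via the Kronecker deltas forcing $i=n$ and $l=k$. Your explicit choice of threshold $n_0>k$ (and exceeding the degrees attached to the $a_l$ as well) treats the $i=0$ boundary term of $a_l\Delta(2n+l)$ slightly more carefully than the paper's $N(\underline b,\underline z)$, but this concerns only finitely many $n$ and changes nothing in the conclusion.
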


\begin{proof} Define $z_i\in\mathcal W$ for $i\geq 0$ by $\underline z=(z_i)_{i\geq0}\in\oplus_{i\geq0}\mathcal W$. 
Since the sequence $(b_{l})_{l\geq0}$ takes values zero for all but a finite number of indices, the same is true of the sequences $(\mathrm{deg}^{(1)}(b_{l}))_{l\geq0}$,  $(\mathrm{deg}^{(2)}(b_{l}))_{l\geq0}$, so these sequences are bounded. Since the sequence $(z_i)_{i\geq0}$ takes values zero for all but a finite number of indices, the same is true of the sequence $(\mathrm{deg}(z_i))_{i\geq0}$,  so this sequence is bounded. Set 
$$
N(\underline b,\underline z):=\mathrm{max}\Big\{\mathrm{max}\{\mathrm{deg}^{(1)}(b_{l})|l\geq0\},\mathrm{max}\{\mathrm{deg}^{(2)}(b_{l})|l\geq0\},\mathrm{max}\{\mathrm{deg}(z_i)|i\geq0\}\Big\}.
$$
Let $n>N(\underline b,\underline z)$ and let $k\geq0$. 

For any $l\geq0$, one has $(\partial_n\otimes\partial_{n+k})(a_{l}\Delta(2n+l))=\sum_{i=0}^{2n+l}a_{l}\delta_{i,n}\delta_{2n+l-i,n+k}$ therefore 
\begin{equation}\label{RH1}
\forall l\geq 0,\quad (\partial_n\otimes\partial_{n+k})(a_{l}\Delta(2n+l))=\delta_{kl}a_k. 
\end{equation} 

Let $l\geq0$. One has $n>\mathrm{deg}^{(1)}(b_{l})$ and $n+k>\mathrm{deg}^{(2)}(b_{l})$, which by Lemma \ref{17002908} implies
$(\partial_n\otimes\mathrm{id})(b_{l})=(\mathrm{id}\otimes\partial_{n+k})(b_{l})=0$ and therefore 
\begin{equation}\label{09092021}
(\partial_n\otimes\partial_{n+k})(b_{l})=(\epsilon\otimes\partial_{n+k})(b_{l})=(\partial_n\otimes\epsilon)(b_{l})=0. 
\end{equation}
Then    
\begin{align*}
& \nonumber (\partial_n\otimes\partial_{n+k})(\Delta(2n+l)b_{l})=\Delta(2n+l)(\partial_n\otimes\partial_{n+k})(b_{l})
+(\partial_n\otimes\mathrm{id})(\Delta(2n+l))(\epsilon\otimes\partial_{n+k})(b_{l})
\\&\nonumber +(\mathrm{id}\otimes\partial_{n+k})(\Delta(2n+l))(\partial_n\otimes\epsilon)(b_{l})
+(\partial_n\otimes\partial_{n+k})(\Delta(2n+l))\epsilon^{\otimes2}(b_{l})
\\&=(\partial_n\otimes\partial_{n+k})(\Delta(2n+l))\epsilon^{\otimes2}(b_{l})
=(\sum_{i=0}^{2n+l}\delta_{i,n}\delta_{2n+l-i,n+k})\epsilon^{\otimes2}(b_{l})1^{\otimes2}
=\delta_{kl}\epsilon^{\otimes2}(b_{l})1^{\otimes2}, 
\end{align*}
where the first equality follows from Lemma \ref{id:partial:2908} (b), the second equality from \eqref{09092021} and the third 
equality from $\partial_k(\tilde y_i)=\delta_{ki}$ and the definition of $\Delta(2n+l)$. Therefore
\begin{equation}\label{RH2}
\forall l\geq0, \quad
(\partial_n\otimes\partial_{n+k})(\Delta(2n+l)b_{l})=\delta_{kl}\epsilon^{\otimes2}(b_{l})1^{\otimes2}. 
\end{equation}

For any $i\geq0$, one has $\partial_n(z_i)=\partial_{n+k}(z_i)=0$ by Lemma \ref{lem:1725:2908}, therefore 
\begin{equation}\label{RH3}
\forall i\geq0,\quad (\partial_n\otimes\partial_{n+k})(z_i\otimes \tilde y_{2n+i}+\tilde y_{2n+i}\otimes z_i)=0. 
\end{equation}

Subtracting the sum for all the values of $l\geq0$ of \eqref{RH2} from the similar sum for \eqref{RH1}, adding up 
the sum for all the values of $i\geq0$ of \eqref{RH3}, and applying $\partial_n\otimes\partial_{n+k}$ to the expression \eqref{explicit:map'} of 
$\mathbf h(\underline a,\underline b,\underline z)(2n)$, one derives for any $k\geq0$: 
\begin{equation*}%\label{18192908}
\forall n>N(\underline b,\underline z),\quad (\partial_n\otimes\partial_{n+k})(\mathbf h(\underline a,\underline b,\underline z)(2n))=a_k-\epsilon^{\otimes2}(b_k)1^{\otimes2},  
\end{equation*}
which implies the result. \end{proof}

\begin{lem}\label{lem:1836:290821}
If $\underline z=(z_j)_{j\geq0}\in\oplus_{j\geq0}\mathcal W$, then for any $i\geq0$, the sequence $\mathbb Z_{>0}\ni n\mapsto 
((\epsilon\circ\partial_{n+i})\otimes\mathrm{id})(\mathbf h(0,0,\underline z)(n))\in\mathcal{W}$ is convergent in the discrete topology of 
$\mathcal W$, with limit $z_i$.  
\end{lem}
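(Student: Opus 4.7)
The plan is to compute the sequence directly from the explicit formula \eqref{explicit:map'} for $\mathbf h$, exploiting the fact that only finitely many of the $z_j$ are nonzero. Substituting $\underline a = \underline b = 0$ in \eqref{explicit:map'} yields
$$
\mathbf h(0,0,\underline z)(n) = \sum_{j\geq 0}\bigl(z_j\otimes \tilde y_{n+j} + \tilde y_{n+j}\otimes z_j\bigr),
$$
so I would apply $(\epsilon\circ\partial_{n+i})\otimes\mathrm{id}$ termwise and estimate each of the two kinds of summands separately.

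For the first kind, $(\epsilon\circ\partial_{n+i})(z_j)\cdot\tilde y_{n+j}$: since $\underline z \in \oplus_{j\geq 0}\mathcal W$, the set $\{j\mid z_j\neq 0\}$ is finite, and hence $N(\underline z):=\max\{\mathrm{deg}(z_j)\mid j\geq 0\}$ is a well-defined integer. For any $n>N(\underline z)-i$ and any $j\geq 0$, Lemma \ref{lem:1725:2908} gives $\partial_{n+i}(z_j)=0$, so the whole first family of summands vanishes for large $n$.

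For the second kind, $(\epsilon\circ\partial_{n+i})(\tilde y_{n+j})\cdot z_j$: the defining relation of $\partial_m$ from Lemma \ref{id:partial:2908}(a), applied to $a=1$, $m=n+j$, gives $\partial_{n+i}(\tilde y_{n+j}) = \delta_{n+i,n+j}\cdot 1 = \delta_{i,j}\cdot 1$; applying $\epsilon$ then yields the scalar $\delta_{i,j}$. Summing over $j\geq 0$ produces exactly $z_i$.

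Putting the two computations together, for all $n>N(\underline z)$ one has
$$
((\epsilon\circ\partial_{n+i})\otimes\mathrm{id})(\mathbf h(0,0,\underline z)(n)) = \sum_{j\geq 0}\delta_{i,j}\,z_j = z_i,
$$
which is the desired convergence in the discrete topology, with limit $z_i$. There is no real obstacle: the argument is a simpler version of Lemma \ref{lem:18192908}, with the single mild point to handle carefully being the use of finiteness of the support of $\underline z$ to uniformly bound $\mathrm{deg}(z_j)$ so that $\partial_{n+i}(z_j)=0$ holds for large $n$ simultaneously for all $j$.
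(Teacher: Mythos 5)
Your proof is correct and follows essentially the same route as the paper's: both substitute $\underline a=\underline b=0$ into \eqref{explicit:map'}, use the finiteness of the support of $\underline z$ to set $N(\underline z)=\max\{\mathrm{deg}(z_j)\mid j\geq0\}$ so that Lemma \ref{lem:1725:2908} kills the terms $(\epsilon\circ\partial_{n+i})(z_j)\otimes$-side for large $n$, and extract $z_i$ from $\partial_{n+i}(\tilde y_{n+j})=\delta_{ij}$. If anything, your handling of the full sum over $j$ is slightly more explicit than the paper's, which records only the $j=i$ summand.
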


\begin{proof}
If $\underline z\in\oplus_{j\geq0}\mathcal W$, then the sequence $j\mapsto  \mathrm{deg}(z_j)$ is bounded. Set 
$N(\underline z):=\mathrm{max}\{\mathrm{deg}(z_j)|j\geq0\}$. Then if $n>N(\underline z)$ and $i\geq 0$, then $n+i>N(\underline z)\geq 
\mathrm{deg}(z_i)$, which by Lemma \ref{lem:1725:2908} implies $\partial_{n+i}(z_i)=0$. Therefore
\begin{equation}\label{vanishing}
\text{if } n>N(\underline z)\text{ and }i\geq0, \text{ then }\partial_{n+i}(z_i)=0.
\end{equation}
 Then 
if $n>N(\underline z)$ and $i\geq0$, 
$$
((\epsilon\circ\partial_{n+i})\otimes\mathrm{id})(\mathbf h(0,0,\underline z)(n))
=((\epsilon\circ\partial_{n+i})\otimes\mathrm{id})(z_i\otimes\tilde y_{n+i}+\tilde y_{n+i}\otimes z_i)=0+z_i=z_i,  
$$
where the first equality follows from \eqref{explicit:map'}, and the second equation from $\partial_{n+i}(\tilde y_{n+i})=1$ and \eqref{vanishing}. 
This implies the result. \end{proof}

\subsection{Opposite versions of the results of \S\S\ref{441:1303} and \ref{442:1303}}\label{subsub:opposite}

\begin{lem}\label{id:partial:0909}
(a) For each $n>0$, there is a linear endomorphism $\partial'_n$ of $\mathcal W$, uniquely determined by the conditions $\partial'_n(\tilde y_m a)=\delta_{nm}a$ for any $a\in\mathcal W$ and $m>0$, and $\partial'_n(1)=0$.   

(b) One has 
$\partial'_n(ab)=\partial'_n(a)b+\epsilon(a)\partial'_n(b)$ for any $a,b\in\mathcal W$.
\end{lem}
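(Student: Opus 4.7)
The plan is to mirror the proof of Lemma \ref{id:partial:2908}, using the symmetric fact that $\mathcal W$ is freely generated as a $\mathbb Q$-algebra by the family $(\tilde y_m)_{m>0}$. This free generation yields a $\mathbb Q$-basis of $\mathcal W$ consisting of the empty word $1$ together with all words $\tilde y_{m_1}\tilde y_{m_2}\cdots\tilde y_{m_l}$ with $l\geq 1$ and $m_1,\ldots,m_l>0$. Equivalently, one has a decomposition $\mathcal W=\mathbb Q\cdot 1\oplus(\oplus_{m>0}\tilde y_m\mathcal W)$, which is the opposite analogue of the decomposition used in Lemma-Definition \ref{lemdef:09100609}(a).

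For part (a), I would use this basis to define $\partial'_n$ by prescribing its values: set $\partial'_n(1):=0$ and $\partial'_n(\tilde y_{m_1}\cdots\tilde y_{m_l}):=\delta_{n,m_1}\tilde y_{m_2}\cdots\tilde y_{m_l}$ for $l\geq 1$. Extending by linearity gives a well-defined endomorphism of $\mathcal W$, and the conditions stated in (a) uniquely determine it, since any element of $\mathcal W$ has a unique decomposition of the form $\lambda\cdot 1+\sum_{m>0}\tilde y_m a_m$ with $\lambda\in\mathbb Q$ and finitely many nonzero $a_m\in\mathcal W$, on each summand of which $\partial'_n$ is specified by the defining conditions.

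For part (b), I would verify the identity $\partial'_n(ab)=\partial'_n(a)b+\epsilon(a)\partial'_n(b)$ on pairs of basis elements $a,b$ and extend by bilinearity. If $a=1$, then $\partial'_n(a)=0$ and $\epsilon(a)=1$, so both sides reduce to $\partial'_n(b)$. If $a=\tilde y_{m_1}\cdots\tilde y_{m_l}$ with $l\geq 1$, then $\epsilon(a)=0$ while $ab=\tilde y_{m_1}\cdots\tilde y_{m_l}\cdot b$, so the defining formula for $\partial'_n$ gives $\partial'_n(ab)=\delta_{n,m_1}\tilde y_{m_2}\cdots\tilde y_{m_l}b=\partial'_n(a)b$, which matches the right-hand side (the $\epsilon(a)\partial'_n(b)$ term vanishing). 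This exhausts the case analysis.

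No serious obstacle is expected: the statement is a direct left-handed analogue of Lemma \ref{id:partial:2908}, where the counit $\epsilon$ appears on the left rather than the right, reflecting the fact that $\partial'_n$ is a left-sided analogue (a "costalk" at $\tilde y_n$ on the initial factor) rather than a right-sided one. The only small care needed is to treat the case $a=1$ separately, so that the coefficient $\epsilon(a)$ of $\partial'_n(b)$ appears with value $1$ exactly in that case.
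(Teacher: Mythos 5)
Your proof is correct, but it takes a different route from the paper. You argue directly: you use the $\mathbb Q$-basis of $\mathcal W$ given by words in the free generators $(\tilde y_m)_{m>0}$ (equivalently the decomposition $\mathcal W=\mathbb Q\oplus(\oplus_{m>0}\tilde y_m\mathcal W)$), define $\partial'_n$ on this basis, and verify the identity of (b) by the case analysis $a=1$ versus $a$ a nonempty word, exactly mirroring the proof of Lemma \ref{id:partial:2908}. The paper instead introduces the word-reversal anti-automorphism $\mathrm{inv}$ of $\mathcal W$ (the linear involution fixing $1$ and each $\tilde y_m$ with $\mathrm{inv}(ab)=\mathrm{inv}(b)\mathrm{inv}(a)$), checks that $\partial'_n=\mathrm{inv}\circ\partial_n\circ\mathrm{inv}$, and deduces both (a) and (b) by transport of structure from Lemma \ref{id:partial:2908}, using $\epsilon\circ\mathrm{inv}=\epsilon$. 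Your argument is self-contained and slightly more elementary; the paper's is shorter and, more importantly, sets up $\mathrm{inv}$ as a tool that is reused throughout \S\ref{subsub:opposite} (e.g.\ $\mathrm{deg}'=\mathrm{deg}\circ\mathrm{inv}$ and the opposite versions of the degree lemmas), so the conjugation trick pays for itself later. Both proofs are complete; the only point worth flagging in yours is that the uniqueness claim in (a) deserves the one-line remark that the two defining conditions determine $\partial'_n$ on every basis word (a nonempty word is $\tilde y_{m_1}$ times a shorter word), which you have in substance.
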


\begin{proof}
There is a unique automorphism $\mathrm{inv}$ of the vector space $\mathcal W$, which is the identity on $1$ and the $\tilde y_m$, 
$m\geq 1$ and satisfies $\mathrm{inv}(ab)=\mathrm{inv}(b)\mathrm{inv}(a)$. One checks that for any $n\geq1$, $\partial'_n=\mathrm{inv}
\circ\partial_n\circ\mathrm{inv}$. The results then follow from Lemma \ref{id:partial:2908}. 
\end{proof}

\begin{lemdef}\label{lemdef:09092021:bis}
(a) One has $\mathcal W=\mathbb Q\oplus (\oplus_{k>0}\tilde y_k\mathcal W)$.

(b) For $a\in\mathcal W$, one defines $\mathrm{deg}'(a):=\min\{d\geq0|a\in\mathbb Q\oplus (\oplus_{k=1}^d\tilde y_k\mathcal W)\}\in\mathbb Z_{\geq 0}$.
\end{lemdef}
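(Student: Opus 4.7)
The plan is to mirror the proof of Lemma-Definition \ref{lemdef:09100609} on the opposite side, exploiting the fact that this statement is the "left-handed" analogue of the "right-handed" decomposition already established.

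For part (a), I would argue as follows. Since $\mathcal W$ is freely generated as an associative $\mathbb Q$-algebra by the family $(\tilde y_n)_{n>0}$, a vector space basis of $\mathcal W$ is given by the set of words $\tilde y_{n_1}\cdots\tilde y_{n_l}$ with $l\geq 0$ and $n_1,\ldots,n_l>0$; the empty word $1$ spans the $\mathbb Q$-summand, while any word of length $\geq 1$ factors uniquely as $\tilde y_{n_1}\cdot(\tilde y_{n_2}\cdots\tilde y_{n_l})$ and therefore lies in $\tilde y_{n_1}\mathcal W$. Partitioning the non-unit basis words according to their leftmost letter $\tilde y_k$ yields the claimed direct sum decomposition $\mathcal W=\mathbb Q\oplus(\oplus_{k>0}\tilde y_k\mathcal W)$.

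An alternative, even shorter route is to invoke the anti-automorphism $\mathrm{inv}$ of $\mathcal W$ used in the proof of Lemma \ref{id:partial:0909}: since $\mathrm{inv}$ is a $\mathbb Q$-linear bijection of $\mathcal W$ fixing $1$ and each $\tilde y_k$ and satisfying $\mathrm{inv}(ab)=\mathrm{inv}(b)\mathrm{inv}(a)$, it maps $\mathcal W\tilde y_k$ bijectively onto $\tilde y_k\mathcal W$ for each $k>0$. Applying $\mathrm{inv}$ to the decomposition of Lemma-Definition \ref{lemdef:09100609}(a) therefore produces the decomposition of (a).

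For part (b), the justification is identical in form to that of Lemma-Definition \ref{lemdef:09100609}(b): part (a) implies that the sequence of subspaces $(\mathbb Q\oplus(\oplus_{k=1}^{d}\tilde y_k\mathcal W))_{d\geq0}$ is an increasing chain whose union is the whole of $\mathcal W$, so for every $a\in\mathcal W$ the set $\{d\geq 0\mid a\in\mathbb Q\oplus(\oplus_{k=1}^{d}\tilde y_k\mathcal W)\}$ is a nonempty subset of $\mathbb Z_{\geq0}$ and admits a minimum, making $\mathrm{deg}'(a)$ well-defined.

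There is no real obstacle here: the entire content is a transcription of the proof of Lemma-Definition \ref{lemdef:09100609} via the symmetry $ab\leftrightarrow ba$, and the only care needed is to check that the freeness of $\mathcal W$ in the generators $(\tilde y_n)_{n>0}$ is a two-sided statement, which it is.
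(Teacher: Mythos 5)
Your proposal is correct, and your second route (applying the anti-automorphism $\mathrm{inv}$ to the decomposition of Lemma-Definition \ref{lemdef:09100609}) is exactly the paper's own proof, which additionally disposes of (b) by simply noting $\mathrm{deg}'=\mathrm{deg}\circ\mathrm{inv}$. Your first, direct argument via the leftmost-letter partition of basis words is an equally valid alternative but adds nothing essential.
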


\begin{proof}
(a) follows from Lemma-Definition \ref{lemdef:09100609} by applying $\mathrm{inv}$ (see proof of Lemma \ref{id:partial:0909}). 
Then $\mathrm{deg}'=\mathrm{deg}\circ\mathrm{inv}$. 
\end{proof}

\begin{lem}\label{lem:1725:0909}
For $a\in\mathcal W$, one has $\partial'_n(a)=0$ for any $n>\mathrm{deg}'(a)$.
\end{lem}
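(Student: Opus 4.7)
The plan is to mimic the proof of Lemma \ref{lem:1725:2908} in the ``opposite'' setting, exploiting the anti-automorphism $\mathrm{inv}$ of $\mathcal W$ introduced in the proof of Lemma \ref{id:partial:0909}. Two essentially equivalent strategies are available, and I would present the shorter of the two.

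The direct route is to unfold definitions: by Lemma-Definition \ref{lemdef:09092021:bis}(b), the element $a$ lies in $\mathbb Q\oplus (\oplus_{k=1}^{\mathrm{deg}'(a)}\tilde y_k\mathcal W)$; from the defining conditions $\partial'_n(1)=0$ and $\partial'_n(\tilde y_m b)=\delta_{n,m}b$ of Lemma \ref{id:partial:0909}(a), the operator $\partial'_n$ is identically zero on $\mathbb Q$ and on $\tilde y_k\mathcal W$ for every $k\neq n$, hence on the whole space $\mathbb Q\oplus (\oplus_{k=1}^{\mathrm{deg}'(a)}\tilde y_k\mathcal W)$ whenever $n>\mathrm{deg}'(a)$; thus $\partial'_n(a)=0$.

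Alternatively, and more in the spirit of the proof of Lemma-Definition \ref{lemdef:09092021:bis}, one may deduce the statement from Lemma \ref{lem:1725:2908} by transport through $\mathrm{inv}$. Indeed, $\mathrm{inv}$ is a vector space automorphism of $\mathcal W$ fixing $1$ and each $\tilde y_m$, with $\mathrm{inv}(ab)=\mathrm{inv}(b)\mathrm{inv}(a)$, and the proofs of Lemma \ref{id:partial:0909} and Lemma-Definition \ref{lemdef:09092021:bis} give the identities $\partial'_n=\mathrm{inv}\circ\partial_n\circ\mathrm{inv}$ and $\mathrm{deg}'=\mathrm{deg}\circ\mathrm{inv}$. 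Assuming $n>\mathrm{deg}'(a)=\mathrm{deg}(\mathrm{inv}(a))$, Lemma \ref{lem:1725:2908} applied to $\mathrm{inv}(a)\in\mathcal W$ yields $\partial_n(\mathrm{inv}(a))=0$, and therefore $\partial'_n(a)=\mathrm{inv}(\partial_n(\mathrm{inv}(a)))=\mathrm{inv}(0)=0$.

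There is no real obstacle; the only point to be mildly careful about is the order of factors in the decomposition $\mathbb Q\oplus (\oplus_{k>0}\tilde y_k\mathcal W)$ of Lemma-Definition \ref{lemdef:09092021:bis}(a) (as opposed to $\mathbb Q\oplus (\oplus_{k>0}\mathcal W\tilde y_k)$ in Lemma-Definition \ref{lemdef:09100609}(a)), which is exactly what forces the use of $\mathrm{inv}$ or of the ``opposite'' defining relation $\partial'_n(\tilde y_m a)=\delta_{n,m}a$. Either formulation yields a two-line proof, parallel to that of Lemma \ref{lem:1725:2908}.
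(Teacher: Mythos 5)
Your proposal is correct, and your second route (transporting Lemma \ref{lem:1725:2908} through $\mathrm{inv}$ via $\partial'_n=\mathrm{inv}\circ\partial_n\circ\mathrm{inv}$ and $\mathrm{deg}'=\mathrm{deg}\circ\mathrm{inv}$) is exactly the paper's one-line proof. The direct unfolding you also sketch is equally valid and merely re-proves Lemma \ref{lem:1725:2908} in the opposite setting.
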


\begin{proof}
Follows from Lemma \ref{lem:1725:2908} and $\mathrm{deg}'=\mathrm{deg}\circ\mathrm{inv}$. 
\end{proof}

\begin{lemdef}
(a) One has $\mathcal W^{\otimes2}=\mathbb Q\otimes\mathcal W\oplus(\oplus_{k>0}\tilde y_k\mathcal W\otimes\mathcal W)=\mathcal W\otimes\mathbb Q\oplus (\oplus_{k>0}\mathcal W\otimes\tilde y_k\mathcal W)$. 

(b) For $a\in\mathcal W^{\otimes2}$, one defines $\mathrm{deg}^{\prime(1)}(a):=\min\{d\geq0|a\in
\mathbb Q\otimes\mathcal W\oplus(\oplus_{k=1}^d\tilde y_k\mathcal W\otimes\mathcal W)\}\in\mathbb Z_{\geq 0}$ and  $\mathrm{deg}^{\prime(2)}(a):=\min\{d\geq0|a\in
\mathcal W\otimes\mathbb Q\oplus(\oplus_{k=1}^d\mathcal W\otimes\tilde y_k\mathcal W)\}\in\mathbb Z_{\geq 0}$.
\end{lemdef}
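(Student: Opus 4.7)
The plan is to mirror the argument of Lemma-Definition \ref{lemdef:09092021}, transferred from the left-handed decomposition of $\mathcal W$ to the right-handed one, exploiting the vector-space automorphism $\mathrm{inv}$ of $\mathcal W$ introduced in the proof of Lemma \ref{id:partial:0909}.

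For part (a), I would derive both decompositions from Lemma-Definition \ref{lemdef:09092021:bis}(a), which already asserts $\mathcal W = \mathbb Q \oplus \bigoplus_{k>0}\tilde y_k\mathcal W$. Tensoring this identity on the right with $\mathcal W$ produces the first claimed equality $\mathcal W^{\otimes 2} = \mathbb Q\otimes \mathcal W \oplus \bigoplus_{k>0} \tilde y_k\mathcal W \otimes \mathcal W$, and tensoring on the left with $\mathcal W$ produces the second. Equivalently, one may apply $\mathrm{inv}\otimes\mathrm{inv}$ to the decomposition of Lemma-Definition \ref{lemdef:09092021}(a); since $\mathrm{inv}$ fixes each $\tilde y_k$ and satisfies $\mathrm{inv}(ab)=\mathrm{inv}(b)\mathrm{inv}(a)$, it carries $\mathcal W\tilde y_k$ bijectively onto $\tilde y_k\mathcal W$, so the subspaces $\mathcal W\tilde y_k\otimes\mathcal W$ and $\mathcal W\otimes \mathcal W\tilde y_k$ are sent to $\tilde y_k\mathcal W\otimes\mathcal W$ and $\mathcal W\otimes\tilde y_k\mathcal W$ respectively.

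For part (b), I would argue exactly as in the proof of Lemma-Definition \ref{lemdef:09092021}(b). By part (a), the two families
\[
\Big(\mathbb Q\otimes\mathcal W \oplus \bigoplus_{k=1}^d \tilde y_k\mathcal W\otimes\mathcal W\Big)_{d\geq 0}, \qquad \Big(\mathcal W\otimes\mathbb Q \oplus \bigoplus_{k=1}^d \mathcal W\otimes\tilde y_k\mathcal W\Big)_{d\geq 0}
\]
are increasing sequences of subspaces of $\mathcal W^{\otimes 2}$ whose union equals $\mathcal W^{\otimes 2}$. Hence for every $a\in\mathcal W^{\otimes 2}$ the set of $d\geq 0$ for which $a$ belongs to the $d$-th member of each family is a non-empty subset of $\mathbb Z_{\geq 0}$, which guarantees the existence of the minima defining $\mathrm{deg}^{\prime(1)}(a)$ and $\mathrm{deg}^{\prime(2)}(a)$.

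There is no real obstacle here: the statement is the strict left-right opposite of Lemma-Definition \ref{lemdef:09092021}, and the only point requiring a moment of care is checking that the anti-automorphism $\mathrm{inv}$ genuinely intertwines left and right multiplication by $\tilde y_k$, which is immediate from its defining properties. I therefore expect the entire proof to reduce to a one-line invocation of Lemma-Definition \ref{lemdef:09092021:bis}(a) together with a remark that its tensor product with $\mathcal W$ provides the decompositions sought.
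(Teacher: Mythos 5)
Your proof is correct and follows essentially the same route as the paper: the paper's own proof obtains (a) precisely by your second alternative, applying $\mathrm{inv}^{\otimes2}$ to Lemma-Definition \ref{lemdef:09092021} (a), and it disposes of (b) by noting $\mathrm{deg}^{\prime(1)}=\mathrm{deg}^{(1)}\circ\mathrm{inv}^{\otimes2}$ and $\mathrm{deg}^{\prime(2)}=\mathrm{deg}^{(2)}\circ\mathrm{inv}^{\otimes2}$, which is equivalent to your increasing-union argument. Your first alternative (tensoring the decomposition of Lemma-Definition \ref{lemdef:09092021:bis} (a) with $\mathcal W$ on either side) is a harmless, equally valid variant of the same idea.
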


\begin{proof}
(a) follows from Lemma-Definition \ref{lemdef:09092021} by applying $\mathrm{inv}^{\otimes 2}$. 
Then $\mathrm{deg}^{\prime (1)}=\mathrm{deg}^{(1)}\circ\mathrm{inv}^{\otimes2}$ 
and $\mathrm{deg}^{\prime (2)}=\mathrm{deg}^{(2)}\circ\mathrm{inv}^{\otimes2}$. 
\end{proof}

\begin{lem}\label{17002908:bis}
For any $a\in\mathcal W^{\otimes2}$, one has 
$$
(\partial'_n\otimes\mathrm{id})(a)=0\quad \mathrm{for}\quad n>\mathrm{deg}^{\prime(1)}(a)\quad\mathrm{for}\quad (\mathrm{id}\otimes\partial'_n)(a)=0\quad\mathrm{for}\quad n>\mathrm{deg}^{\prime(2)}(a). 
$$
\end{lem}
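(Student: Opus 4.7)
The plan is to mirror the proof of Lemma \ref{17002908} verbatim, using the "opposite" decomposition from Lemma-Definition above in place of Lemma-Definition \ref{lemdef:09092021}, and Lemma \ref{lem:1725:0909} in place of Lemma \ref{lem:1725:2908}. Concretely, given $a\in\mathcal W^{\otimes2}$ and the definition of $\mathrm{deg}^{\prime(1)}(a)$, one has
$$
a\in \mathbb Q\otimes\mathcal W\oplus(\oplus_{k=1}^{\mathrm{deg}^{\prime(1)}(a)}\tilde y_k\mathcal W\otimes\mathcal W)=\bigl(\mathbb Q\oplus\oplus_{k=1}^{\mathrm{deg}^{\prime(1)}(a)}\tilde y_k\mathcal W\bigr)\otimes\mathcal W,
$$
and by Lemma \ref{lem:1725:0909} the restriction of $\partial'_n$ to the first tensor factor vanishes for $n>\mathrm{deg}^{\prime(1)}(a)$, hence $(\partial'_n\otimes\mathrm{id})(a)=0$. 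The second assertion is obtained symmetrically, starting from the dual expression $a\in\mathcal W\otimes(\mathbb Q\oplus\oplus_{k=1}^{\mathrm{deg}^{\prime(2)}(a)}\tilde y_k\mathcal W)$ and applying Lemma \ref{lem:1725:0909} to $\mathrm{id}\otimes\partial'_n$.

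An equally valid alternative would be to deduce the statement from Lemma \ref{17002908} via the anti-automorphism $\mathrm{inv}$ introduced in the proof of Lemma \ref{id:partial:0909}: using the relations $\partial'_n=\mathrm{inv}\circ\partial_n\circ\mathrm{inv}$, $\mathrm{deg}^{\prime(1)}=\mathrm{deg}^{(1)}\circ\mathrm{inv}^{\otimes2}$ and $\mathrm{deg}^{\prime(2)}=\mathrm{deg}^{(2)}\circ\mathrm{inv}^{\otimes2}$ (the last two recorded in the proof of the preceding Lemma-Definition), one applies Lemma \ref{17002908} to $\mathrm{inv}^{\otimes2}(a)\in\mathcal W^{\otimes2}$ and conjugates back by $\mathrm{inv}^{\otimes2}$. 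Since the paper's analogous "opposite" lemmas (e.g. Lemma \ref{lem:1725:0909} derived from Lemma \ref{lem:1725:2908}) follow this pattern, I would write the proof in this compact form.

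There is essentially no obstacle: the statement is routine bookkeeping translating a decomposition of $\mathcal W^{\otimes2}$ into a vanishing property of the corresponding projection-like operators. The only thing to be careful about is to match the two decompositions of $\mathcal W^{\otimes2}$ in Lemma-Definition above with the correct one of the two superscripts $\prime(1),\prime(2)$, and to note that $\partial'_n(1)=0$ so the constant part $\mathbb Q\otimes\mathcal W$ (respectively $\mathcal W\otimes\mathbb Q$) does contribute zero under $\partial'_n\otimes\mathrm{id}$ (respectively $\mathrm{id}\otimes\partial'_n$) for every $n\geq1$, which is already part of Lemma \ref{lem:1725:0909}.
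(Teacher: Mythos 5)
Your proposal is correct, and your preferred "compact form" is exactly the paper's proof: the paper simply applies Lemma \ref{17002908} to $\mathrm{inv}^{\otimes2}(a)$, using the relations $\partial'_n=\mathrm{inv}\circ\partial_n\circ\mathrm{inv}$ and $\mathrm{deg}^{\prime(i)}=\mathrm{deg}^{(i)}\circ\mathrm{inv}^{\otimes2}$ recorded in the preceding proofs. Your first, direct argument is an equally valid (slightly more explicit) variant of the same routine bookkeeping.
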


\begin{proof}
Follows Lemma \ref{17002908} by applied to $\mathrm{inv}^{\otimes2}(a)$. 
\end{proof}

\begin{lem}\label{lem:18192908bis}
If $(\underline a,\underline b,\underline z)\in(\oplus_{l\geq0}\mathcal W^{\otimes2})^{\oplus 2}\oplus(\oplus_{i\geq0}\mathcal W)$, then for any 
$k\geq0$, the sequence $\mathbb Z_{>0}\ni n\mapsto 
(\partial'_n\otimes\partial'_{n+k})(\mathbf h(\underline a,\underline b,\underline z)(2n))\in\mathcal{W}^{\otimes2}$ is convergent in the 
discrete topology of $\mathcal W^{\otimes2}$, with limit $-b_k+\epsilon^{\otimes2}(a_k)1^{\otimes2}$, where $\underline a=(a_l)_{l\geq0}$, 
$\underline b=(b_l)_{l\geq0}$.  
\end{lem}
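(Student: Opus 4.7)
The plan is to transport the argument of Lemma \ref{lem:18192908} verbatim, using the primed versions $\partial'_n$, $\mathrm{deg}^{\prime(1)}$, $\mathrm{deg}^{\prime(2)}$ and Lemmas \ref{lem:1725:0909} and \ref{17002908:bis} in place of their unprimed counterparts. The one genuine novelty is that the Leibniz-type identity for $\partial'_n$ (Lemma \ref{id:partial:0909}(b)) places the counit $\epsilon$ on the \emph{left} factor rather than the right; this swap is precisely what accounts for the exchange of the roles of $\underline a$ and $\underline b$ and the opposite sign in the announced limit.

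First, I would set
\[
N'(\underline a,\underline z):=\max\Big\{\max_{l\geq 0}\mathrm{deg}^{\prime(1)}(a_l),\ \max_{l\geq 0}\mathrm{deg}^{\prime(2)}(a_l),\ \max_{i\geq 0}\mathrm{deg}'(z_i)\Big\},
\]
finite because the families $(a_l)$ and $(z_i)$ have finite support, and fix $n>N'(\underline a,\underline z)$ together with $k\geq 0$. Lemma \ref{17002908:bis} applied to $a_l$ then gives $(\partial'_n\otimes\mathrm{id})(a_l)=(\mathrm{id}\otimes\partial'_{n+k})(a_l)=0$, and Lemma \ref{lem:1725:0909} gives $\partial'_n(z_i)=\partial'_{n+k}(z_i)=0$. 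I would then expand $(\partial'_n\otimes\partial'_{n+k})$ on each of the three summand types in \eqref{explicit:map'} via Lemma \ref{id:partial:0909}(b), mirroring the four-term expansion used in the proof of Lemma \ref{lem:18192908}. Because the $\epsilon$-factor now sits on the left, the unique surviving summand of $(\partial'_n\otimes\partial'_{n+k})(a_l\Delta(2n+l))$ is $\epsilon^{\otimes 2}(a_l)\cdot(\partial'_n\otimes\partial'_{n+k})(\Delta(2n+l))=\delta_{kl}\epsilon^{\otimes 2}(a_l)\cdot 1^{\otimes 2}$, while the surviving summand of $(\partial'_n\otimes\partial'_{n+k})(\Delta(2n+l)b_l)$ is $(\partial'_n\otimes\partial'_{n+k})(\Delta(2n+l))\cdot b_l=\delta_{kl}b_l$; the $\tilde y_{2n+i}$-terms vanish exactly as in \eqref{RH3}. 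Summing these contributions via \eqref{explicit:map'} produces the constant value $-b_k+\epsilon^{\otimes 2}(a_k)1^{\otimes 2}$ for every $n>N'(\underline a,\underline z)$, which is the claimed discrete-topological limit.

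The only real obstacle is bookkeeping: checking that the contributions of $\tilde y_0=-1$ in each application of Lemma \ref{id:partial:0909}(b) are harmless, and that three of the four terms of each expansion really are killed by the primed vanishing lemmas. An alternative route, which bypasses this bookkeeping entirely, is to apply the antiautomorphism $\mathrm{inv}^{\otimes 2}$ of $\mathcal W^{\otimes 2}$ built from the involution $\mathrm{inv}$ introduced in the proof of Lemma \ref{id:partial:0909}: since $\mathrm{inv}$ fixes each $\tilde y_j$ one has $\mathrm{inv}^{\otimes 2}(\Delta(m))=\Delta(m)$, whence $\mathrm{inv}^{\otimes 2}\circ\mathbf h(\underline a,\underline b,\underline z)=\mathbf h(-\mathrm{inv}^{\otimes 2}(\underline b),-\mathrm{inv}^{\otimes 2}(\underline a),\mathrm{inv}(\underline z))$, and one has $\partial'_n\otimes\partial'_{n+k}=\mathrm{inv}^{\otimes 2}\circ(\partial_n\otimes\partial_{n+k})\circ\mathrm{inv}^{\otimes 2}$. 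Feeding this into Lemma \ref{lem:18192908}, using $\epsilon\circ\mathrm{inv}=\epsilon$ and the involutivity of $\mathrm{inv}^{\otimes 2}$, delivers the stated limit $-b_k+\epsilon^{\otimes 2}(a_k)1^{\otimes 2}$ directly.
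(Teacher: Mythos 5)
Your proposal is correct and follows essentially the same route as the paper: the paper's proof likewise defines the constant $\tilde N(\underline a,\underline z)$ from the primed degrees of $\underline a$ and $\underline z$ and transports the computation of Lemma \ref{lem:18192908} using Lemmas \ref{lem:1725:0909} and \ref{17002908:bis} (which you cite correctly, where the paper's proof has a self-referential typo). Your alternative argument via $\mathrm{inv}^{\otimes2}$, reducing the statement to Lemma \ref{lem:18192908} through the identity $\mathrm{inv}^{\otimes 2}\circ\mathbf h(\underline a,\underline b,\underline z)=\mathbf h(-\mathrm{inv}^{\otimes 2}(\underline b),-\mathrm{inv}^{\otimes 2}(\underline a),\mathrm{inv}(\underline z))$, is also valid and arguably cleaner, but it is a bonus rather than a divergence.
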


\begin{proof}
Set $\tilde N(\underline a,\underline z):=\mathrm{max}\{\mathrm{max}\{\mathrm{deg}^{\prime(1)}(a_{l})|l\geq0\},
\mathrm{max}\{\mathrm{deg}^{\prime(2)}(a_{l})|l\geq0\},\mathrm{max}\{\mathrm{deg}'(z_i)|i\geq0\}\}$. Similarly to the proof of Lemma 
\ref{lem:18192908}, and using Lemmas \ref{lem:1725:0909} and \ref{lem:18192908bis}, 
one proves  
$$%\begin{equation}\label{18192908bis}
\forall k\geq0,\quad\forall n>\tilde N(\underline a,\underline z),\quad (\partial'_n\otimes\partial'_{n+k})(\mathbf h(\underline a,\underline b,\underline z)(2n))=-b_k+\epsilon^{\otimes2}(a_k)1^{\otimes2} 
$$%\end{equation}
which implies the result. 
\end{proof}

\subsection{Computation of $\mathrm{Ker}(\mathbf h)$}\label{subsub:comp:ker:h}

\begin{defn}
 $\mathbf{j} : \oplus_{k\geq0}\mathbb Q\to(\oplus_{k\geq0}\mathcal W^{\otimes2})^{\oplus 2}\oplus
(\oplus_{i\geq0}\mathcal W)$ is the linear map given by $\underline c\mapsto(\underline c1^{\otimes2},\underline c1^{\otimes2},0)$. 
\end{defn}

\begin{prop}\label{descr:map'}
The sequence $\oplus_{k\geq0}\mathbb Q\stackrel{\mathbf{j}}{\to}(\oplus_{k\geq0}\mathcal W^{\otimes2})^{\oplus 2}\oplus
(\oplus_{i\geq0}\mathcal W)\stackrel{\mathbf h}{\to}\mathrm{Map}(\mathbb Z_{>0},\mathcal W^{\otimes2})$ is exact. 
\end{prop}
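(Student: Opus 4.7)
The plan is to prove exactness by verifying the two inclusions $\mathrm{Im}(\mathbf j)\subset\mathrm{Ker}(\mathbf h)$ and $\mathrm{Ker}(\mathbf h)\subset\mathrm{Im}(\mathbf j)$ separately. For the first inclusion, I would apply the explicit formula \eqref{explicit:map'} to an element of the form $(\underline c\,1^{\otimes2},\underline c\,1^{\otimes2},0)$ with $\underline c=(c_k)_{k\geq0}$: each contribution to the first sum becomes $c_k\Delta(n+k)-\Delta(n+k)c_k=0$ since $c_k\in\mathbb Q$ commutes with $\Delta(n+k)\in\mathcal W^{\otimes2}$, and the third sum is zero. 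Hence $\mathbf h\circ\mathbf j=0$.

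For the reverse inclusion, let $(\underline a,\underline b,\underline z)\in\mathrm{Ker}(\mathbf h)$, with $\underline a=(a_k)_{k\geq0}$ and $\underline b=(b_k)_{k\geq0}$. I would extract $\underline a$ and $\underline b$ from this vanishing by applying the discrete-topology convergence results of \S\ref{442:1303}. Specifically, Lemma \ref{lem:18192908} applied to the sequence $n\mapsto(\partial_n\otimes\partial_{n+k})(\mathbf h(\underline a,\underline b,\underline z)(2n))=0$ forces $a_k-\epsilon^{\otimes2}(b_k)1^{\otimes2}=0$, and the opposite version Lemma \ref{lem:18192908bis} forces $-b_k+\epsilon^{\otimes2}(a_k)1^{\otimes2}=0$. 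Setting $c_k:=\epsilon^{\otimes2}(a_k)\in\mathbb Q$, these two identities combine to give $a_k=b_k=c_k\,1^{\otimes2}$. Note that since $\underline a\in\oplus_{k\geq0}\mathcal W^{\otimes2}$ is finitely supported, so is $\underline c=(c_k)_{k\geq0}$, and therefore $\underline c\in\oplus_{k\geq0}\mathbb Q$.

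It then remains to show $\underline z=0$. Using linearity of $\mathbf h$ and the first part, the decomposition
$$
\mathbf h(\underline a,\underline b,\underline z)=\mathbf h(\mathbf j(\underline c))+\mathbf h(0,0,\underline z)=\mathbf h(0,0,\underline z)
$$
shows that $\mathbf h(0,0,\underline z)=0$. Lemma \ref{lem:1836:290821} applied to the sequence $n\mapsto((\epsilon\circ\partial_{n+i})\otimes\mathrm{id})(\mathbf h(0,0,\underline z)(n))=0$ then gives $z_i=0$ for every $i\geq0$. Hence $(\underline a,\underline b,\underline z)=\mathbf j(\underline c)\in\mathrm{Im}(\mathbf j)$, completing the proof.

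The arguments are essentially bookkeeping once the convergence lemmas of \S\ref{442:1303} and \S\ref{subsub:opposite} are in place; the main work was hidden there, in showing that applying $\partial_n\otimes\partial_{n+k}$ (and its opposite) to $\mathbf h(\underline a,\underline b,\underline z)(2n)$ isolates the coefficients $a_k$ and $b_k$ in the discrete-topology limit. No further obstacle is expected in assembling these ingredients.
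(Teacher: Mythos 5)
Your proposal is correct and follows essentially the same route as the paper's own proof: the inclusion $\mathrm{Im}(\mathbf j)\subset\mathrm{Ker}(\mathbf h)$ via \eqref{explicit:map'}, then Lemmas \ref{lem:18192908}, \ref{lem:18192908bis} and \ref{lem:1836:290821} together with uniqueness of discrete-topology limits to recover $a_k=b_k=c_k1^{\otimes2}$ and $\underline z=0$. Your added remark that $\underline c$ is finitely supported (so that it indeed lies in $\oplus_{k\geq0}\mathbb Q$) is a small point the paper leaves implicit, and is worth stating.
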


\begin{proof} Using \eqref{explicit:map'}, one shows $\mathbf h(\underline c1^{\otimes2},\underline c1^{\otimes2},0)=0$ for any 
$\underline c\in\oplus_{l\geq 0}\mathbb Q$, which implies $\mathbf h\circ \mathbf{j}=0$. It follows that 
$\mathrm{Im}(\mathbf{j})\subset\mathrm{Ker}(\mathbf h)$. 
Let us prove the opposite inclusion. 
If $(\underline a,\underline b,\underline z)\in(\oplus_{l\geq0}\mathcal W^{\otimes2})^{\oplus 2}\oplus
(\oplus_{i\geq0}\mathcal W)$ belongs to $\mathrm{Ker}(\mathbf h)$, then $\mathbf h(\underline a,\underline b,\underline z)=0$, 
which implies that for any $k\geq 0$, the sequence $((\partial_n\otimes\partial_{n+k})\circ 
\mathbf h(\underline a,\underline b,\underline z)(2n))_{n>0}$ is zero. Lemma \ref{lem:18192908} and the uniqueness of the limit of 
a convergent series in the discrete topology then implies 
\begin{equation}\label{eq:a:b:1}
\forall k\geq0,\quad a_k=\epsilon^{\otimes2}(b_k)1^{\otimes2}.
\end{equation} Similarly, 
for any $k\geq 0$, the sequence $((\partial'_n\otimes\partial'_{n+k})\circ 
\mathbf h(\underline a,\underline b,\underline z)(2n))_{n>0}$ is zero. Lemma \ref{lem:18192908bis} and the same uniqueness 
principle then implies  
\begin{equation}\label{eq:a:b:2}
\forall k\geq0,\quad b_k=\epsilon^{\otimes2}(a_k)1^{\otimes2}.
\end{equation}
Equations \eqref{eq:a:b:1} and \eqref{eq:a:b:2} imply that for any $k\geq0$, 
$\epsilon^{\otimes2}(a_k)
=\epsilon^{\otimes2}(b_k)$, and if one sets $c_k:=\epsilon^{\otimes2}(a_k)
=\epsilon^{\otimes2}(b_k)\in\mathbb Q$, then $a_k=c_k1^{\otimes2}=b_k$. Set now $\underline c:=(c_k)_{k\geq0}\in\oplus_{k\geq0}\mathbb Q$, 
then $\underline a=\underline c1^{\otimes2}=\underline b$. Then $0=\mathbf h(\underline a,\underline b,\underline z)=\mathbf h(\underline c1^{\otimes2},\underline c1^{\otimes2},0)+\mathbf h(0,0,\underline z)=\mathbf h(0,0,\underline z)$, where the first (resp. second, third) equality follows from $(\underline a,\underline b,\underline z)\in\mathrm{Ker}(\mathbf h)$ (resp. $\underline a=\underline c1^{\otimes2}=\underline b$, $(\underline c1^{\otimes2},\underline c1^{\otimes2},0)\in\mathrm{Ker}(\mathbf h)$). 
Therefore $\mathbf h(0,0,\underline z)=0$. It follows that for any $i\geq0$, the sequence 
$(((\epsilon\circ\partial_{n+i})\otimes\mathrm{id})\circ \mathbf h(0,0,\underline z)(n))_{n>0}$ is zero. 
Then Lemma \ref{lem:1836:290821}, together with the uniqueness of a limit in the discrete topology 
implies $z_i=0$ for any $i\geq0$, hence $\underline z=0$. 
So $(\underline a,\underline b,\underline z)=(\underline c1^{\otimes2},\underline c1^{\otimes2},0)=\mathbf{j}(\underline c)\in\mathrm{Im}(\mathbf{j})$. 
\end{proof}

\subsection{A commutative square relating $\mathbf H$ and $\Delta^{\mathcal M}$ and a commutative triangle}\label{subsub:comm:square}

\begin{prop}\label{comm:square}
The following diagram is commutative  
$$
\xymatrix{
\mathcal V_0\ar^{\!\!\!\!\!\!\!\!\!\!\!\!\!\!\!\!\!\!\!\!\!\!
\!\!\!\!\!\!\!\!\!\!\!\!\!\!\!\!\!\!\!\!\!\!\!\!\!
\mathbf{H}}[r]\ar_{-\cdot 1_{\mathcal M}}[d]&
(\oplus_{k\geq0}\mathcal W^{\otimes2})\oplus
(\oplus_{k\geq0}\mathcal W^{\otimes2})\oplus
(\oplus_{i\geq0}\mathcal W)
\ar^{(-\cdot 1_{\mathcal M})^{\otimes 2}\circ (p_0\oplus 0\oplus 0)}[d]
\\ 
\mathcal M\ar_{\Delta^{\mathcal M}-\mathrm{id}\otimes1_{\mathcal M}-1_{\mathcal M}\otimes \mathrm{id}}[r]
& \mathcal M^{\otimes2}
}
$$
where $p_0 : \oplus_{k\geq0}\mathcal W^{\otimes2}\to\mathcal W^{\otimes2}$
is the projection on the summand $k=0$. 
\end{prop}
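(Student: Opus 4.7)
The plan is to reduce the verification to a check on the basis \eqref{basis:of:V} of $\mathcal V_0$, by linearity of all arrows in the square. I would fix $v=e_0^{a_0-1}e_1\cdots e_1 e_0^{a_l-1}$ in this basis, with $l\geq 0$, $a_0,\ldots,a_l>0$, and $a_0>1$ if $l=0$, and compare the two composites.

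Going ``right then down'': by Definition \ref{def:map:1409} and the shape of the projection $p_0\oplus 0\oplus 0$, one extracts from $\mathbf H(v)$ only the element $L_1(v)\in\mathcal W^{\otimes 2}$, which by \eqref{eq:beg:1309} equals $\delta_{a_l,1}(\Delta^{\mathcal W}-\mathrm{id}\otimes 1-1\otimes\mathrm{id})(\tilde y_{a_0}\cdots\tilde y_{a_{l-1}})$ (this being zero in the $l=0$ case, since then the empty product is $1$ and $\Delta^{\mathcal W}(1)-1\otimes 1-1\otimes 1=-1^{\otimes 2}$, while $\delta_{a_0,1}=0$ by the constraint $a_0>1$). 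Applying $(-\cdot 1_{\mathcal M})^{\otimes 2}$ then yields $\delta_{a_l,1}\,(\Delta^{\mathcal W}-\mathrm{id}\otimes 1-1\otimes\mathrm{id})(\tilde y_{a_0}\cdots\tilde y_{a_{l-1}})\cdot 1_{\mathcal M}^{\otimes 2}$.

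Going ``down then right'': if $a_l\geq 2$ (or $l=0$), then $v$ ends in $e_0$, so $v\in\mathcal Ve_0$, whence $v\cdot 1_{\mathcal M}=0$ and the image vanishes; this matches the first computation since $\delta_{a_l,1}=0$ in that case. If instead $l\geq 1$ and $a_l=1$, then $v=\tilde y_{a_0}\cdots\tilde y_{a_{l-1}}$ belongs to $\mathcal W$; applying $\Delta^{\mathcal M}-\mathrm{id}\otimes 1_{\mathcal M}-1_{\mathcal M}\otimes\mathrm{id}$ to $v\cdot 1_{\mathcal M}=\tilde y_{a_0}\cdots\tilde y_{a_{l-1}}\cdot 1_{\mathcal M}$, and using \eqref{rel:Delta:M:Delta:W} to replace $\Delta^{\mathcal M}$ by $\Delta^{\mathcal W}(\cdot)\cdot 1_{\mathcal M}^{\otimes 2}$ on this element, produces exactly $(\Delta^{\mathcal W}-\mathrm{id}\otimes 1-1\otimes\mathrm{id})(\tilde y_{a_0}\cdots\tilde y_{a_{l-1}})\cdot 1_{\mathcal M}^{\otimes 2}$, matching the other side (where now $\delta_{a_l,1}=1$).

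No genuine obstacle is anticipated: the content of the diagram is that, after passage to $\mathcal M^{\otimes 2}$, the only surviving piece of $\mathbf H(v)$ is the $L_1$-component, which by design encodes $\Delta^{\mathcal W}-\mathrm{id}\otimes 1-1\otimes\mathrm{id}$ on the ``$\mathcal W$-part'' of $v$, and this matches the lower horizontal arrow via the module harmonic coproduct identity \eqref{rel:Delta:M:Delta:W}. The only care required concerns the boundary case $l=0$ and the coincidence between the Kronecker condition $a_l=1$ (which distinguishes $L_1(v)\neq 0$ from $L_1(v)=0$) and the membership of $v$ in $\mathcal W$ versus $\mathcal Ve_0$ (which distinguishes $v\cdot 1_{\mathcal M}\neq 0$ from $v\cdot 1_{\mathcal M}=0$).
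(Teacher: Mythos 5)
Your proposal is correct and follows essentially the same route as the paper: the paper packages your basis computation into the identity $L_1=(\Delta^{\mathcal W}-\mathrm{id}\otimes 1-1\otimes\mathrm{id})\circ\pi$, where $\pi:\mathcal V_0\to\mathcal W$ kills $\mathcal Ve_0$ and is the inclusion on $\mathcal Ve_1$, and then concludes via \eqref{rel:Delta:M:Delta:W} exactly as you do. Your case analysis on $a_l=1$ versus $a_l\geq 2$ (or $l=0$) is precisely the verification of that identity on the basis \eqref{basis:of:V}, so the two arguments coincide in substance.
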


\begin{proof}
One checks that the restriction of ${{L}}_1:\mathcal V_0\to\mathcal W^{\otimes2}$ to $\mathcal Ve_0$ is zero, and that its 
restriction to $\mathcal Ve_1\subset\mathcal W$ coincides with $\Delta^{\mathcal{W}}-\mathrm{id}\otimes1-1\otimes\mathrm{id}$. 
This implies
\begin{equation}\label{beg:1:Delta:W}
    {{L}}_1=(\Delta^{\mathcal{W}}-\mathrm{id}\otimes1-1\otimes\mathrm{id})\circ\pi, 
\end{equation}
where $\pi : \mathcal V_0\to\mathcal W$ is the map whose restriction to $\mathcal Ve_0$ is the injection $\mathcal Ve_0\subset\mathcal W$
and with kernel $\mathcal Ve_0$. 
Let $v\in\mathcal V$. Then 
\begin{align*}
    & (-\cdot 1_{\mathcal M})^{\otimes2}\circ(p_0\oplus 0\oplus 0)(\mathbf{H}(v))={{L}}_1(v)\cdot 1_{\mathcal M}^{\otimes2} 
    =(\Delta^{\mathcal{W}}(\pi(v))-\pi(v)\otimes1-1\otimes\pi(v))\cdot 1_{\mathcal M}^{\otimes2}
    \\ & = \Delta^{\mathcal{M}}(\pi(v)\cdot 1_{\mathcal M})-\pi(v)\cdot 1_{\mathcal M}\otimes 1_{\mathcal M}-1_{\mathcal M}\otimes\pi(v)\cdot 1_{\mathcal M}
    =\Delta^{\mathcal{M}}(v\cdot 1_{\mathcal M})-v\cdot 1_{\mathcal M}\otimes 1_{\mathcal M}-1_{\mathcal M}\otimes v\cdot 1_{\mathcal M}
\end{align*}
where the first equality follows from $(p_0\oplus 0\oplus 0)(\mathbf{H}(v))={{L}}_1(v)$ (see Definition \ref{def:map:1409}), 
the second equality follows from \eqref{beg:1:Delta:W}, the third equality from \eqref{def:Delta:W}, and the last equality from 
$\pi(v)\cdot 1_{\mathcal M}=v\cdot 1_{\mathcal M}$ for any $v\in \mathcal V_0$. This implies the commutativity of the square. 
\end{proof}

\begin{prop}\label{easy:comm:triangle}
The following triangle is commutative 
$$
\xymatrix{\oplus_{k\geq0}\mathbb Q\ar^{p_0\cdot 1_{\mathcal M}^{\otimes2}}[rr]\ar_{\mathbf{j}}[rd]&&\mathcal M^{\otimes2}
\\&(\oplus_{k\geq0}\mathcal W^{\otimes2})^{\oplus2}\oplus(\oplus_{i\geq0}\mathcal W)\ar_{(-\cdot 1_{\mathcal M})^{\otimes2}\circ(p_0\oplus 0\oplus 0)}[ur]&}
$$
where $p_0$ is as in Proposition \ref{comm:square} and $\tilde p_0 : \oplus_{k\geq0}\mathbb Q\to\mathbb Q$ is the projection on the 
component $k=0$. 
\end{prop}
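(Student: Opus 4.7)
The plan is to verify commutativity by unwinding the definitions on an arbitrary element $\underline c = (c_k)_{k \geq 0} \in \oplus_{k \geq 0}\mathbb Q$ and comparing the two paths through the triangle. Both sides should produce the element $c_0 \cdot 1_{\mathcal M}^{\otimes 2}$ of $\mathcal M^{\otimes 2}$, which is the image of $\underline c$ under the top arrow $\tilde p_0 \cdot 1_{\mathcal M}^{\otimes 2}$.

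For the lower path, I would first apply $\mathbf{j}$ as recalled in its definition: $\mathbf{j}(\underline c) = (\underline c\, 1^{\otimes 2}, \underline c\, 1^{\otimes 2}, 0)$, whose first component is the sequence $(c_k \cdot 1^{\otimes 2})_{k \geq 0}$ in $\oplus_{k \geq 0} \mathcal W^{\otimes 2}$. Next, the composite $p_0 \oplus 0 \oplus 0$ selects the $k=0$ summand of the first factor and kills the other two summands, giving $c_0 \cdot 1^{\otimes 2} \in \mathcal W^{\otimes 2}$. Finally, since the canonical projection $-\cdot 1_{\mathcal M} \colon \mathcal V \to \mathcal M$ sends $1$ to $1_{\mathcal M}$, its tensor square sends $1^{\otimes 2}$ to $1_{\mathcal M}^{\otimes 2}$, so applying $(-\cdot 1_{\mathcal M})^{\otimes 2}$ yields $c_0 \cdot 1_{\mathcal M}^{\otimes 2}$. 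This matches the upper path by the very definition of $\tilde p_0$, establishing commutativity on the generators.

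Since the two paths are both $\mathbb Q$-linear (as compositions of linear maps), equality on arbitrary $\underline c$ follows from linearity, and the statement is immediate. The argument is purely a bookkeeping check, so I expect no essential obstacle: the only subtlety is keeping straight that $p_0$ refers to the projection on the $k=0$ summand of $\oplus_{k \geq 0}\mathcal W^{\otimes 2}$ (as in Proposition \ref{comm:square}), whereas the top arrow is the projection $\tilde p_0$ onto the $k=0$ summand of $\oplus_{k \geq 0}\mathbb Q$ followed by multiplication by $1_{\mathcal M}^{\otimes 2}$. Both projections conspire to extract the $k=0$ coefficient $c_0$, after which scalar multiplication by $1^{\otimes 2}$ and then by $(-\cdot 1_{\mathcal M})^{\otimes 2}$ produces exactly $c_0 \cdot 1_{\mathcal M}^{\otimes 2}$.
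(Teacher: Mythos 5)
Your proof is correct and is essentially identical to the paper's own argument: both evaluate the two composites on an arbitrary $\underline c=(c_k)_{k\geq0}$, track $\mathbf{j}(\underline c)=(\underline c\,1^{\otimes2},\underline c\,1^{\otimes2},0)$ through $p_0\oplus0\oplus0$ and $(-\cdot 1_{\mathcal M})^{\otimes2}$ to obtain $c_0\,1_{\mathcal M}^{\otimes2}=\tilde p_0(\underline c)\,1_{\mathcal M}^{\otimes2}$. You also correctly resolve the notational point that the arrow labelled in the diagram is the map $\tilde p_0(-)\cdot 1_{\mathcal M}^{\otimes2}$.
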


\begin{proof} Let $\underline c=(c_k)_{k\geq0}\in\oplus_{k\geq0}\mathbb Q$. Then 
\begin{align*}
&(-\cdot 1_{\mathcal M})^{\otimes2}\circ(p_0\oplus 0\oplus 0)\circ \mathbf{j}(\underline c)=
(-\cdot 1_{\mathcal M})^{\otimes2}\circ(p_0\oplus 0\oplus 0)(\underline c1^{\otimes2},\underline c1^{\otimes2},0)
\\&=(-\cdot 1_{\mathcal M})^{\otimes2}(c_0 1^{\otimes2})=c_0 1_{\mathcal M}^{\otimes2}=
\tilde p_0(\underline c)1_{\mathcal M}^{\otimes2}. 
\end{align*}
where the first equality follows from 
$\mathbf{j}(\underline c)=(\underline c1^{\otimes2},\underline c1^{\otimes2},0)$, 
the second equality follows from 
$(p_0\oplus 0\oplus 0)(\underline c1^{\otimes2},\underline c1^{\otimes2},0)=c_0 1^{\otimes2}$, 
and the last equality follows from $\tilde p_0(\underline c)=c_0$. 
\end{proof}

\section{Proof of equality results}

In \S\ref{subsection61}, we put together the results of \S\S\ref{sect:4:2501} and \ref{sect:5:2501} to prove the equality between
the Lie algebra $\mathfrak{stab}(\hat\Delta^{\mathcal W,\DR})$ and $\mathfrak{stab}(\hat\Delta^{\mathcal M,\DR})$ 
(Theorem \ref{thm:main}). In \S\ref{subsection:62:2501}, we derive from this the equality of the stabilizer bitorsors 
$\mathsf{Stab}(\hat\Delta^{\mathcal W,\DR/\B})$ 
and $\mathsf{Stab}(\hat\Delta^{\mathcal M,\DR/\B})$ (Theorem \ref{thm:24122021}). 

\subsection{Proof of the equality of the stabilizer Lie algebras $\mathfrak{stab}(\hat\Delta^{\mathcal{W,\DR}})$ and 
$\mathfrak{stab}(\hat\Delta^{\mathcal{M,\DR}})$}\label{subsection61}

The subspace $\mathcal P(\mathcal M)\subset\mathcal M$ is graded. 
 It follows that the same holds for the subspace $(-\cdot 1_{\mathcal M})^{-1}(\mathcal P(\mathcal M))\subset\mathcal V_0$.

\begin{prop}\label{thm:aux}
One has $\mathfrak{stab}_{\mathcal V_0}(\Delta^{\mathcal W})\subset%\mathbb Q1\oplus
(-\cdot 1_{\mathcal M})^{-1}(\mathcal P(\mathcal M))$ (inclusion of graded subspaces of $\mathcal V_0$). 
\end{prop}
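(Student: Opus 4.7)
The plan is to chain together all the structural results from Sections 4 and 5. The key observation is that Proposition \ref{prop:upper:comm:triangle} factors $-\cdot\Delta^{\mathcal W}$ as $\mathbf i\circ\mathbf h\circ\mathbf H$, that $\mathbf i$ is injective (Lemma \ref{lem46:1401}), that $\ker(\mathbf h)=\mathrm{Im}(\mathbf j)$ (Proposition \ref{descr:map'}), and that $\mathbf H$ fits into the commutative square of Proposition \ref{comm:square} together with the commutative triangle of Proposition \ref{easy:comm:triangle}. These fit together to express the primitivity defect of $v\cdot 1_{\mathcal M}$ in terms of an image of $\mathbf H(v)$.

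Concretely, I would proceed as follows. Let $v\in\mathfrak{stab}_{\mathcal V_0}(\Delta^{\mathcal W})$. By Lemma \ref{lem:16393110}(a), this means $v\cdot\Delta^{\mathcal W}=0$ as an element of $\mathrm{Der}_{\Delta^{\mathcal W}}(\mathcal W,\mathcal W^{\otimes2})$. Proposition \ref{prop:upper:comm:triangle} translates this into $\mathbf i(\mathbf h(\mathbf H(v)))=0$, so by injectivity of $\mathbf i$ one has $\mathbf H(v)\in\ker(\mathbf h)$. Proposition \ref{descr:map'} then produces $\underline c=(c_k)_{k\geq0}\in\oplus_{k\geq0}\mathbb Q$ with $\mathbf H(v)=\mathbf j(\underline c)$. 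Applying the composition $(-\cdot 1_{\mathcal M})^{\otimes2}\circ(p_0\oplus0\oplus0)$ to this identity, and invoking Proposition \ref{comm:square} on the source side and Proposition \ref{easy:comm:triangle} on the target side, one obtains
\[
(\Delta^{\mathcal M}-\mathrm{id}\otimes 1_{\mathcal M}-1_{\mathcal M}\otimes\mathrm{id})(v\cdot 1_{\mathcal M})=\tilde p_0(\underline c)\cdot 1_{\mathcal M}^{\otimes2}.
\]

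It remains to conclude that the right-hand side vanishes, which is a degree argument. Since both sides of the claimed inclusion are graded subspaces of $\mathcal V_0$, it suffices to treat the case of homogeneous $v$ of degree $d\geq 1$. Then $v\cdot 1_{\mathcal M}$ has degree $d$ in $\mathcal M$, so $(\Delta^{\mathcal M}-\mathrm{id}\otimes 1_{\mathcal M}-1_{\mathcal M}\otimes\mathrm{id})(v\cdot 1_{\mathcal M})$ lies in $\mathcal M^{\otimes 2}[d]$, whereas $1_{\mathcal M}^{\otimes2}\in\mathcal M^{\otimes2}[0]$. Since $d\geq1$, the identity displayed above forces both sides to be zero, so $v\cdot 1_{\mathcal M}\in\mathcal P(\mathcal M)$ by \eqref{identif:PM:10112021}, which is exactly $v\in(-\cdot 1_{\mathcal M})^{-1}(\mathcal P(\mathcal M))$.

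The main substantive work has already been carried out in the earlier sections — the decomposition of \S\ref{sect:4:2501}, the discrete-topology computation of $\ker(\mathbf h)$ in \S\ref{subsub:comp:ker:h}, and the commutative diagrams of \S\ref{subsub:comm:square}. The only remaining obstacle here is a bookkeeping one, namely to verify that the two commutative diagrams indeed patch together to give the displayed identity; this is routine once one tracks the factorization $\mathbf H(v)=\mathbf j(\underline c)$ through the maps. No additional combinatorial or topological argument is needed beyond what is already available.
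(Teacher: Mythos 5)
Your proposal is correct and follows essentially the same route as the paper's own proof: factor $-\cdot\Delta^{\mathcal W}$ as $\mathbf i\circ\mathbf h\circ\mathbf H$, use injectivity of $\mathbf i$ and the exactness of $(\mathbf j,\mathbf h)$ to write $\mathbf H(v)=\mathbf j(\underline c)$, push this through the commutative square and triangle to get $(\Delta^{\mathcal M}-\mathrm{id}\otimes 1_{\mathcal M}-1_{\mathcal M}\otimes\mathrm{id})(v\cdot 1_{\mathcal M})=\tilde p_0(\underline c)1_{\mathcal M}^{\otimes2}$, and kill the right-hand side by the degree argument on homogeneous elements of positive degree. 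The only difference is cosmetic: the paper reduces to homogeneous elements at the outset, while you invoke gradedness at the final step.
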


\begin{proof}
Both sides of this inclusion are graded, so it suffices to show the inclusion of homogeneous components for each degree. Let $a\in\mathfrak{stab}_{\mathcal V_0}(\Delta^{\mathcal W})\subset\mathcal V_0$ be homogeneous. 
%If $a$ has degree 0, then $a\in\mathbb Q1$. Assume that the degree of $a$ is positive. 
Consider the diagram 
$$
\xymatrix{\mathcal M\ar_{\Delta^{\mathcal M}-\mathrm{id}\otimes1_{\mathcal M}-1_{\mathcal M}\otimes\mathrm{id}}[d]&&\mathcal V_0
\ar_{-\cdot1_{\mathcal M}}[ll]\ar^{-\cdot\Delta^{\mathcal W}}[r]\ar_{\mathbf{H}}[d]&
\mathrm{Der}_{\Delta^{\mathcal W}}(\mathcal W,\mathcal W^{\otimes2})\\
\mathcal M^{\otimes2}&&
(\oplus_{k\geq0}\mathcal W^{\otimes2})^{\oplus 2}\oplus
(\oplus_{i\geq0}\mathcal W)\ar_{\mathbf h}[r]\ar_{\!\!\!\!\!\!
\!\!\!\!\!\!\!\!\!\!\!\!\!\!\!\!\!\!\!\!
(-\cdot 1_{\mathcal M})^{\otimes2}\circ(p_0\oplus 0\oplus 0)}[ll]&\mathrm{Map}(\mathbb Z_{>0},\mathcal W^{\otimes 2})
\ar^\simeq_{\mathbf i}[u]\\
\oplus_{k\geq0}\mathbb Q\ar_{\mathbf{j}}[urr]\ar^{\tilde p_0\cdot 1_{\mathcal M}^{\otimes2}}[u]&&&}
$$
where the left square commutes by Proposition \ref{comm:square}, the right square commutes by Proposition \ref{prop:upper:comm:triangle}, 
the triangle commutes by Proposition \ref{easy:comm:triangle}, and the lower sequence of maps $(\mathbf h,\mathbf j)$ is exact by Proposition \ref{descr:map'}.  

Since $\mathfrak{stab}_{\mathcal V_0}(\Delta^{\mathcal W})=\mathrm{Ker}(-\cdot\Delta^{\mathcal W})$ (see Lemma \ref{lem:16393110} (a)) 
and by the commutativity of the right square, $\mathbf i\circ\mathbf h\circ \mathbf{H}(a)=0$, which by the injectivity of $\mathbf i$
(see Lemma \ref{lem46:1401}) implies that $\mathbf{H}(a)$ belongs to $\mathrm{Ker}(\mathbf h)$. The exactness of the lower sequence of maps 
then implies the existence of $\underline c\in\oplus_{l\geq0}\mathbb Q$, such that 
\begin{equation}\label{12191009}
\mathbf{H}(a)=\mathbf{j}(\underline c).    
\end{equation} 
Then 
$$
(\Delta^{\mathcal M}-\mathrm{id}\otimes1_{\mathcal M}-1_{\mathcal M}\otimes\mathrm{id})(a\cdot 1_{\mathcal M})=(-\cdot 1_{\mathcal M})^{\otimes2}\circ(p_0\oplus0\oplus0)\circ \mathbf{H}(a)
=(-\cdot 1_{\mathcal M})^{\otimes2}\circ(p_0\oplus0\oplus0)(\mathbf{j}(\underline c))
=\tilde p_0(\underline c)1_{\mathcal M}^{\otimes2}, 
$$
where the first equality follows from the commutativity of the left square, the second equality follows from \eqref{12191009}, 
and the third equality follows from the commutativity of the triangle. 
It follows that 
$(\Delta^{\mathcal M}-\mathrm{id}\otimes1_{\mathcal M}-1_{\mathcal M}\otimes\mathrm{id})(a\cdot1)\in\mathbb Q1_{\mathcal M}^{\otimes2}$. 
The map $a\mapsto(\Delta^{\mathcal M}-\mathrm{id}\otimes1_{\mathcal M}-1_{\mathcal M}\otimes\mathrm{id})(a\cdot1)$ is graded and
$a$ has positive degree, which implies that 
$(\Delta^{\mathcal M}-\mathrm{id}\otimes1_{\mathcal M}-1_{\mathcal M}\otimes\mathrm{id})(a\cdot1_{\mathcal M})=0$, which by \eqref{identif:PM:10112021} implies $a\cdot 1_{\mathcal M}\in\mathcal P(\mathcal M)$.  
\end{proof}

\begin{lem}\label{comp:der:2211}
The derivation $^\Gamma\mathrm{der}^{\mathcal W,(1)}_{[e_0,e_1]}$ of $\mathcal W$ is such that for any $n\geq1$, 
$$\tilde y_n\mapsto 
(\tilde y_2+{1\over 2}\tilde y_1^2)\tilde y_n+\tilde y_n(\tilde y_2-{1\over 2}\tilde y_1^2)
-\tilde y_1\tilde y_{n+1}-\tilde y_{n+1}\tilde y_1. 
$$ 
\end{lem}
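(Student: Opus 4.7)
The plan is to reduce the computation of $^\Gamma\mathrm{der}^{\mathcal W,(1)}_{[e_0,e_1]}(\tilde y_n)$ to an application of Lemma \ref{lem:form:of:der} to the three basis elements appearing in $\theta([e_0,e_1])$. First, I would invoke the first equation of (3.5.4) in \cite{EF2} already used in the proof of Proposition \ref{prop:main:sect:2} to rewrite
\[
{}^\Gamma\mathrm{der}^{\mathcal W,(1)}_{[e_0,e_1]}=\mathrm{der}^{\mathcal W,(1)}_{\theta([e_0,e_1])},
\]
thereby transferring the problem from the $\Gamma$-derivation to the ordinary derivation $\mathrm{der}^{\mathcal W,(1)}_{-}$ associated to a suitable element of $\mathcal V_0$.

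Second, I would compute $\theta([e_0,e_1])$ explicitly. From the definition $\theta(x):=x-(x|e_0)e_0+\sum_{n\geq1}(1/n)(x|e_0^{n-1}e_1)e_1^n$ applied to $x=[e_0,e_1]=e_0e_1-e_1e_0$, the only nonvanishing coefficients are $(x|e_0^{n-1}e_1)$ for $n=2$ (equal to $1$), so one obtains
\[
\theta([e_0,e_1])=e_0e_1-e_1e_0+\tfrac12 e_1^2.
\]

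Third, I would decompose this into basis elements of $\mathcal V_0$ in the sense of \eqref{basis:of:V} and apply Lemma \ref{lem:form:of:der} to each piece: $e_0e_1$ corresponds to $(l,a_0,a_1)=(1,2,1)$, giving $\mathrm{der}^{\mathcal W,(1)}_{e_0e_1}(\tilde y_n)=\tilde y_2\tilde y_n-\tilde y_{n+1}\tilde y_1$; $e_1e_0$ corresponds to $(1,1,2)$, giving $\tilde y_1\tilde y_{n+1}-\tilde y_n\tilde y_2$; and $e_1^2$ corresponds to $(2,1,1,1)$, giving $\tilde y_1^2\tilde y_n-\tilde y_n\tilde y_1^2$. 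Summing with coefficients $1,-1,\tfrac12$ and rearranging yields the claimed formula.

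I expect no serious obstacle: the only delicate point is the bookkeeping in identifying $e_1^2=e_0^{1-1}e_1 e_0^{1-1}e_1 e_0^{1-1}$ as the case $l=2$, $a_0=a_1=a_2=1$ of Lemma \ref{lem:form:of:der}, and symmetrically handling $e_0e_1$ versus $e_1e_0$; once these are in place, the final identity is pure regrouping.
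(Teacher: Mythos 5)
Your proposal is correct and follows essentially the same route as the paper: both reduce via the first equation of (3.5.4) in \cite{EF2} to $\mathrm{der}^{\mathcal W,(1)}_{\theta([e_0,e_1])}$ with $\theta([e_0,e_1])=[e_0,e_1]+\tfrac12 e_1^2$, and then evaluate on $\tilde y_n$. The only (harmless) difference is that you apply Lemma \ref{lem:form:of:der} termwise to the three monomials $e_0e_1$, $e_1e_0$, $e_1^2$ and use linearity of $v\mapsto\mathrm{der}^{\mathcal W,(1)}_v$, whereas the paper expands $[[e_0,e_1]+e_1^2/2,e_0^{n-1}]e_1$ directly; your identification of the basis data $(l,a_0,\ldots,a_l)$ for each monomial and the final regrouping both check out.
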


\begin{proof} One computes $\theta([e_0,e_1])=[e_0,e_1]+(1/2)([e_0,e_1]|e_0e_1)e_1^2=[e_0,e_1]+(1/2)e_1^2$, therefore 
$^\Gamma\mathrm{der}^{\mathcal V,(1)}_{[e_0,e_1]}=
\mathrm{der}^{\mathcal V,(1)}_{\theta([e_0,e_1])}$
is given by $e_0\mapsto[[e_0,e_1]+e_1^2/2,e_0]$ and $e_1\mapsto0$. Then for $n\geq1$, 
\begin{align*}
    &    ^\Gamma\mathrm{der}^{\mathcal W,(1)}_{[e_0,e_1]}(\tilde y_n)=
\ ^\Gamma\!\mathrm{der}^{\mathcal V,(1)}_{[e_0,e_1]}(e_0^{n-1}e_1)
=[[e_0,e_1]+e_1^2/2,e_0^{n-1}]e_1
\\&=
(\tilde y_2+{1\over 2}\tilde y_1^2)\tilde y_n+\tilde y_n(\tilde y_2-{1\over 2}\tilde y_1^2)
-\tilde y_1\tilde y_{n+1}-\tilde y_{n+1}\tilde y_1. 
\end{align*}
\end{proof}

\begin{lem}\label{lem:046:2211}
The derivation $\theta([e_0,e_1])\cdot\Delta^{\mathcal W}=(^\Gamma\mathrm{der}^{\mathcal W,(1)}_{[e_0,e_1]}\otimes\mathrm{id}+
\mathrm{id}\otimes\ ^\Gamma\!\mathrm{der}^{\mathcal W,(1)}_{[e_0,e_1]})\circ\Delta^{\mathcal W}
-\Delta^{\mathcal W}\circ\ ^\Gamma\!\mathrm{der}^{\mathcal W,(1)}_{[e_0,e_1]}$ of $\mathcal W$ is the element of 
$\mathrm{Der}_{\Delta^{\mathcal W}}(\mathcal W,\mathcal W^{\otimes2})$ such that for any $n\geq1$, 
$$
\tilde y_n\mapsto 
2(\tilde y_1\otimes\tilde y_{n+1}+\tilde y_{n+1}\otimes\tilde y_1)
-2((\tilde y_2+\tilde y_1^2)\otimes\tilde y_n+\tilde y_n\otimes(\tilde y_2+\tilde y_1^2))
+2(\tilde y_n\otimes1+1\otimes\tilde y_n-\sum_{k=1}^{n-1}\tilde y_k\otimes\tilde y_{n-k})(\tilde y_1\otimes\tilde y_1)
$$ 
\end{lem}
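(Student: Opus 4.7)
The plan is to apply Proposition~\ref{prop:decomp}, which identifies $-\cdot\Delta^{\mathcal W}$ with $\mathbf i\circ\mathbf h\circ\mathbf H$. By linearity of this map in the argument $v\in\mathcal V_0$, it suffices to decompose $\theta([e_0,e_1])=[e_0,e_1]+(1/2)e_1^2=e_0e_1-e_1e_0+\tfrac{1}{2} e_1^2$ (as in the proof of Lemma~\ref{comp:der:2211}) into the basis~\eqref{basis:of:V}, compute $\mathbf H$ on each of the three resulting basis elements, combine linearly with coefficients $(1,-1,\tfrac{1}{2})$, and then apply $\mathbf h$ followed by $\mathbf i$.

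The first step is to tabulate ${{L}}_k$, ${{R}}_k$, ${{M}}_i$ from~\eqref{eq:beg:1309}--\eqref{eq:mix:1309} on each basis element. The three basis elements $e_0e_1$, $e_1e_0$, $e_1^2$ correspond to $(l,a_0,a_1)=(1,2,1)$, $(l,a_0,a_1)=(1,1,2)$, and $(l,a_0,a_1,a_2)=(2,1,1,1)$ respectively. The computation uses only $\Delta^{\mathcal W}(\tilde y_1)=\tilde y_1\otimes1+1\otimes\tilde y_1$ and $\Delta^{\mathcal W}(\tilde y_2)=\tilde y_2\otimes1+1\otimes\tilde y_2-\tilde y_1\otimes\tilde y_1$ from~\eqref{def:Delta:W}, $\Delta^{\mathcal W}(\tilde y_1^2)=\Delta^{\mathcal W}(\tilde y_1)^2$ from the algebra morphism property, together with the conventions $\tilde y_0=-1$ and $\tilde y_a=0$ for $a<0$ from~\eqref{eq:mix:1309}. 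I expect that after the linear combination, significant cancellations occur: all ${{L}}_k$ and all ${{R}}_k$ with $k\neq 1$ will vanish, and the only nonzero coordinates of $\mathbf H(\theta([e_0,e_1]))$ should be ${{R}}_1(\theta([e_0,e_1]))=2\tilde y_1\otimes\tilde y_1$, ${{M}}_1(\theta([e_0,e_1]))=-2(\tilde y_2+\tilde y_1^2)$, and ${{M}}_2(\theta([e_0,e_1]))=2\tilde y_1$.

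Substituting these values into the formula for $\mathbf h$ given in Definition~\ref{def:map':1409} and reading off the resulting derivation via the isomorphism $\mathbf i$ from Lemma~\ref{lem46:1401} should yield the three announced summands: the $-r_0(2\tilde y_1\otimes\tilde y_1)$ contribution produces $2\Delta^{\mathcal W}(\tilde y_n)(\tilde y_1\otimes\tilde y_1)$ after using $\Delta(n)=-\Delta^{\mathcal W}(\tilde y_n)$; the $m_0$ contribution produces $-2((\tilde y_2+\tilde y_1^2)\otimes\tilde y_n+\tilde y_n\otimes(\tilde y_2+\tilde y_1^2))$; and the $m_1$ contribution produces $2(\tilde y_1\otimes\tilde y_{n+1}+\tilde y_{n+1}\otimes\tilde y_1)$. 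The main obstacle will be the bookkeeping: propagating sign conventions correctly (especially $\Delta(n)=-\Delta^{\mathcal W}(\tilde y_n)$) and ensuring that the small-index boundary cases of~\eqref{eq:mix:1309} involving $\tilde y_0=-1$ produce exactly the combination $\tilde y_2+\tilde y_1^2$ (rather than $\tilde y_2\pm\tfrac{1}{2}\tilde y_1^2$, which is what naively appears in $D(\tilde y_n)$ via Lemma~\ref{comp:der:2211}) together with the correct overall coefficient $2$ in the final expression.
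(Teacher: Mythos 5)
Your proposal is correct, but it takes a genuinely different route from the paper. The paper proves this lemma by direct computation from the definition \eqref{act:on:Delta:1309} of the action: it takes the explicit formula for $^\Gamma\mathrm{der}^{\mathcal W,(1)}_{[e_0,e_1]}(\tilde y_n)$ from Lemma \ref{comp:der:2211}, expands the two terms $(^\Gamma\mathrm{der}^{\mathcal W,(1)}_{[e_0,e_1]}\otimes\mathrm{id}+\mathrm{id}\otimes\ ^\Gamma\!\mathrm{der}^{\mathcal W,(1)}_{[e_0,e_1]})\circ\Delta^{\mathcal W}(\tilde y_n)$ and $\Delta^{\mathcal W}\circ\ ^\Gamma\!\mathrm{der}^{\mathcal W,(1)}_{[e_0,e_1]}(\tilde y_n)$ separately using the explicit coproducts of $\tilde y_1$, $\tilde y_2\pm\tfrac12\tilde y_1^2$, $\tilde y_n$, $\tilde y_{n+1}$ and symmetrization identities, and subtracts. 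You instead route everything through Proposition \ref{prop:decomp} ($-\cdot\Delta^{\mathcal W}=\mathbf i\circ\mathbf h\circ\mathbf H$), which is legitimate and non-circular since that proposition is established independently in \S\ref{sect:4:2501}; this reduces the problem to evaluating $L_k$, $R_k$, $M_i$ on the three degree-$2$ words $e_0e_1$, $e_1e_0$, $e_1^2$ of $\theta([e_0,e_1])=[e_0,e_1]+\tfrac12 e_1^2$. I verified your predicted coordinates: all $L_k$ vanish (the contribution $-\tilde y_1\otimes\tilde y_1$ from $e_0e_1$ cancels against $+\tilde y_1\otimes\tilde y_1$ from $\tfrac12 e_1^2$), $R_1(\theta([e_0,e_1]))=2\tilde y_1\otimes\tilde y_1$, $M_1(\theta([e_0,e_1]))=-2(\tilde y_2+\tilde y_1^2)$, $M_2(\theta([e_0,e_1]))=2\tilde y_1$, and all other coordinates are zero; substituting into \eqref{explicit:map'} with $\Delta(n)=-\Delta^{\mathcal W}(\tilde y_n)$ yields exactly the three announced summands, and Lemma \ref{lem46:1401} identifies the result with the claimed element of $\mathrm{Der}_{\Delta^{\mathcal W}}(\mathcal W,\mathcal W^{\otimes2})$. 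Your approach buys a shorter, more structured computation (nine small evaluations instead of a page of telescoping sums) and it makes the Remark following the lemma transparent, since it exhibits $(\underline a^0,\underline b^0,\underline z^0)$ directly as $\mathbf H(\theta([e_0,e_1]))$ (indeed your calculation shows the Remark's assertion should read $\mathbf H(\theta([e_0,e_1]))$ rather than $\mathbf H([e_0,e_1])$, whose first component is $-\tilde y_1\otimes\tilde y_1\neq0$ at $k=0$). The paper's direct computation has the advantage of being self-contained at the level of this lemma. To make your outline a complete proof you only need to write out the tabulated evaluations, whose boundary effects involving $\tilde y_0=-1$ you have correctly anticipated.
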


\begin{proof}
Let  $\sigma$ be the permutation of tensor factor of $\mathcal W^{\otimes2}$. Then  
using Lemma \ref{comp:der:2211}, one computes, for $n\geq1$
\begin{align*}
    & (^\Gamma\mathrm{der}^{\mathcal W,(1)}_{[e_0,e_1]}\otimes\mathrm{id}+
\mathrm{id}\otimes ^\Gamma\mathrm{der}^{\mathcal W,(1)}_{[e_0,e_1]})\circ\Delta^{\mathcal W}(\tilde y_n)
\\&
=(\mathrm{id}+\sigma)\Big(
^\Gamma\mathrm{der}^{\mathcal W,(1)}_{[e_0,e_1]}(\tilde y_n)\otimes1
%+1\otimes^\Gamma\mathrm{der}^{\mathcal W,(1)}_{[e_0,e_1]}(\tilde y_n)
-\sum_{k=1}^{n-1}
\ ^\Gamma\mathrm{der}^{\mathcal W,(1)}_{[e_0,e_1]}(\tilde y_k)\otimes\tilde y_{n-k}
%+\sum_{k=1}^{n-1}\tilde y_k\otimes ^\Gamma\mathrm{der}^{\mathcal W,(1)}_{[e_0,e_1]}(\tilde y_{n-k}) 
\Big)
\end{align*}
where the equality follows from \eqref{def:Delta:W} and from the $\sigma$-invariance of $\Delta^{\mathcal W}(\tilde y_n)$, 
and
\begin{align*}
    & \Delta^{\mathcal W}\circ\ ^\Gamma\!\mathrm{der}^{\mathcal W,(1)}_{[e_0,e_1]}(\tilde y_n)
    =\Delta^{\mathcal W}((\tilde y_2+{1\over 2}\tilde y_1^2)\tilde y_n+\tilde y_n(\tilde y_2-{1\over 2}\tilde y_1^2)
    -\tilde y_1\tilde y_{n+1}-\tilde y_{n+1}\tilde y_1)
\\ & =
(\mathrm{id}+\sigma)\Big(((\tilde y_2+{1\over 2}\tilde y_1^2)\otimes1)(\tilde y_n\otimes1+1\otimes \tilde y_n
-\sum_{k=1}^{n-1}\tilde y_k\otimes\tilde y_{n-k})
\\ & +(\tilde y_n\otimes1+1\otimes \tilde y_n
-\sum_{k=1}^{n-1}\tilde y_k\otimes\tilde y_{n-k})((\tilde y_2-{1\over 2}\tilde y_1^2)\otimes1-\tilde y_1\otimes\tilde y_1)
\\ & -(\tilde y_1\otimes 1)(\tilde y_{n+1}\otimes1+1\otimes \tilde y_{n+1}
-\tilde y_1\otimes\tilde y_n-\sum_{k=1}^{n-1}\tilde y_{k+1}\otimes\tilde y_{n-k})
\\ & -(\tilde y_{n+1}\otimes1+1\otimes \tilde y_{n+1}
-\tilde y_1\otimes\tilde y_n-\sum_{k=1}^{n-1}\tilde y_{k+1}\otimes\tilde y_{n-k})(\tilde y_1\otimes 1)
\Big)
\end{align*}
where the first equality follows from Lemma \ref{comp:der:2211} and the second equality from the identities 
$(\mathrm{id}+\sigma)(a)b=(\mathrm{id}+\sigma)(ab)$ 
and $b(\mathrm{id}+\sigma)(a)=(\mathrm{id}+\sigma)(ba)$ for $a,b\in\mathcal W^{\otimes2}$ such that $b$ is $\sigma$-invariant, 
the $\sigma$-invariance and the explicit expressions of $\Delta^{\mathcal W}(\tilde y_n)$ and $\Delta^{\mathcal W}(\tilde y_{n+1})$, 
and the equalities 
$\Delta^{\mathcal W}(\tilde y_2+{1\over 2}\tilde y_1^2)=(\mathrm{id}+\sigma)((\tilde y_2+{1\over 2}\tilde y_1^2)\otimes1)$,
$\Delta^{\mathcal W}(\tilde y_2-{1\over 2}\tilde y_1^2)=(\mathrm{id}+\sigma)((\tilde y_2-{1\over 2}\tilde y_1^2)\otimes1-\tilde y_1\otimes\tilde 
y_1)$ and $\Delta^{\mathcal W}(\tilde y_1)=(\mathrm{id}+\sigma)(\tilde y_1\otimes1)$. 
The result follows from the computation of the difference of the two expressions. 
\end{proof}

\begin{rem}
It follows that $\theta([e_0,e_1])\cdot\Delta^{\mathcal W}=\mathbf i\circ\mathbf h(\underline a^0,\underline b^0,\underline z^0)$ where 
$\underline a^0=0$, $\underline b^0=(b^0_i)_{i\geq 0}$ where $b^0_0=2\tilde y_1\otimes\tilde y_1$ and $b^0_i=0$ for $i>0$,  
$\underline z^0=(z^0_i)_{i\geq0}$ where $z^0_0=-2(\tilde y_2+\tilde y_1^2)$,  $z^0_1=2\tilde y_1$, and $z^0_i=0$ for $i>1$. 
One can check that $\mathbf H([e_0,e_1])=(\underline a^0,\underline b^0,\underline z^0)$. 
\hfill\qed\medskip
\end{rem}

Recall that $\mathfrak{stab}_{\mathfrak{lie}(e_0,e_1)}(\Delta^{\mathcal W})$ is a graded Lie subalgebra of 
$(\mathfrak{lie}(e_0,e_1),\langle,\rangle)$. 

\begin{lem}\label{lemma:van:deg:2:component}
The degree $2$ component of $\mathfrak{stab}_{\mathfrak{lie}(e_0,e_1)}(\Delta^{\mathcal W})$ is zero. 
\end{lem}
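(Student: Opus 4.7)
The plan is to exploit the fact that the degree-$2$ component of the free Lie algebra on two generators is one-dimensional: $\mathfrak{lie}(e_0,e_1)[2]=\mathbb Q\cdot[e_0,e_1]$ (the only nonzero Lie word of length two in two free generators). It therefore suffices to show that $[e_0,e_1]\notin\mathfrak{stab}_{\mathfrak{lie}(e_0,e_1)}(\Delta^{\mathcal W})$. By Lemma \ref{lem:16393110}(b), this in turn reduces to checking that the derivation $\theta([e_0,e_1])\cdot\Delta^{\mathcal W}\in\mathrm{Der}_{\Delta^{\mathcal W}}(\mathcal W,\mathcal W^{\otimes 2})$ is not the zero map.

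The explicit formula furnished by Lemma \ref{lem:046:2211} makes this a direct inspection. I would simply evaluate at $\tilde y_2$: the formula produces a sum of eight pure tensors in $\mathcal W^{\otimes 2}$,
$$
(\theta([e_0,e_1])\cdot\Delta^{\mathcal W})(\tilde y_2)=2\tilde y_1\otimes\tilde y_3+2\tilde y_3\otimes\tilde y_1-4\tilde y_2\otimes\tilde y_2-2\tilde y_1^2\otimes\tilde y_2-2\tilde y_2\otimes\tilde y_1^2+2\tilde y_2\tilde y_1\otimes\tilde y_1+2\tilde y_1\otimes\tilde y_2\tilde y_1-2\tilde y_1^2\otimes\tilde y_1^2.
$$
Since $(\tilde y_n)_{n>0}$ freely generates the algebra $\mathcal W$, the tensor products of monomials in the $\tilde y_n$ form a $\mathbb Q$-basis of $\mathcal W^{\otimes 2}$, so it is enough to exhibit one basis vector whose coefficient in the above expression is nonzero. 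Among the eight summands, the tensor $\tilde y_1\otimes\tilde y_3$ is the only one whose right tensor factor equals the generator $\tilde y_3$: the right factors of the remaining summands lie in the span of $\tilde y_1$, $\tilde y_2$, $\tilde y_1^2$, $\tilde y_2\tilde y_1$, and $\tilde y_1^2$, none of which equals $\tilde y_3$ in the free algebra $\mathcal W$. Hence the coefficient of $\tilde y_1\otimes\tilde y_3$ is $2\neq 0$, the expression is nonzero, and therefore $\theta([e_0,e_1])\cdot\Delta^{\mathcal W}\neq 0$, yielding the claim.

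I expect no genuine obstacle here; the only point requiring care is the final bookkeeping check that no other summand contributes to the coefficient of the basis vector $\tilde y_1\otimes\tilde y_3$. An alternative route would be to use the decomposition of Proposition \ref{prop:upper:comm:triangle} together with the computation of $\mathrm{Ker}(\mathbf h)$ in Proposition \ref{descr:map'}, checking that $\mathbf H([e_0,e_1])$ does not lie in the image of $\mathbf j$ (its $\underline b$-component $2\tilde y_1\otimes\tilde y_1$ is not a scalar multiple of $1^{\otimes 2}$); but the direct inspection via Lemma \ref{lem:046:2211} is the most economical path.
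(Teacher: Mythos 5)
Your proposal is correct and follows essentially the same route as the paper: reduce to showing $[e_0,e_1]\notin\mathfrak{stab}_{\mathfrak{lie}(e_0,e_1)}(\Delta^{\mathcal W})$ using the one-dimensionality of $\mathfrak{lie}(e_0,e_1)[2]$, then invoke the explicit formula of Lemma \ref{lem:046:2211}. The only difference is that you spell out the non-vanishing by evaluating at $\tilde y_2$ and isolating the coefficient of $\tilde y_1\otimes\tilde y_3$ (a useful precaution, since evaluation at $\tilde y_1$ actually gives zero), whereas the paper leaves this verification implicit.
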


\begin{proof} Since the degree 2 part of $\mathfrak{lie}(e_0,e_1)$ is one-dimensional, spanned by $[e_0,e_1]$, the statement 
is equivalent to $[e_0,e_1]\notin\mathfrak{stab}_{\mathfrak{lie}(e_0,e_1)}(\Delta^{\mathcal W})$, which follows from 
Lemma \ref{lem:046:2211}.
\end{proof}

\begin{thm}\label{thm:main}
The Lie subalgebras $\mathfrak{stab}(\hat\Delta^{\mathcal{W,\DR}})$ and $\mathfrak{stab}(\hat\Delta^{\mathcal{M,\DR}})$
of $\mathfrak g^{\DR}$ (see \cite{EF2}, \S3.5) are equal. 
\end{thm}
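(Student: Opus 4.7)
The plan is to reduce the theorem to an equality of graded Lie subalgebras of $\mathfrak{lie}(e_0,e_1)$, and then to combine previously established results. Propositions \ref{prop:main:sect:2}(c) and \ref{lem:015:1711}(b) present both $\mathfrak{stab}(\hat\Delta^{\mathcal W,\DR})$ and $\mathfrak{stab}(\hat\Delta^{\mathcal M,\DR})$ as semidirect products of $\mathbb Q 1$ with the degree completion of a graded Lie subalgebra of $\mathfrak{lie}(e_0,e_1)$. Since degree completion determines, and is determined by, the collection of homogeneous components, the theorem reduces to showing
$$
\mathfrak{stab}_{\mathfrak{lie}(e_0,e_1)}(\Delta^{\mathcal W}) = \mathfrak{stab}_{\mathfrak{lie}(e_0,e_1)}(\Delta^{\mathcal M})
$$
as graded Lie subalgebras of $(\mathfrak{lie}(e_0,e_1),\langle,\rangle)$.

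The inclusion $\mathfrak{stab}_{\mathfrak{lie}(e_0,e_1)}(\Delta^{\mathcal M}) \subset \mathfrak{stab}_{\mathfrak{lie}(e_0,e_1)}(\Delta^{\mathcal W})$ follows from the bitorsor inclusion $\mathsf{Stab}(\hat\Delta^{\mathcal M,\DR/\B}) \hookrightarrow \mathsf{Stab}(\hat\Delta^{\mathcal W,\DR/\B})$ of \cite{EF3} recalled in the introduction, via the semidirect product reduction above. For the reverse inclusion, I will take an arbitrary $x \in \mathfrak{stab}_{\mathfrak{lie}(e_0,e_1)}(\Delta^{\mathcal W})$ and verify the two conditions that characterize $\mathfrak{stab}_{\mathfrak{lie}(e_0,e_1)}(\Delta^{\mathcal M})$ in Proposition \ref{prop:37:2302}. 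First, by Lemma \ref{lem:16393110}(b), $\theta(x)$ lies in $\mathfrak{stab}_{\mathcal V_0}(\Delta^{\mathcal W})$; Proposition \ref{thm:aux} then yields $\theta(x) \cdot 1_{\mathcal M} \in \mathcal P(\mathcal M)$, so $x \in \theta^{-1}((-\cdot 1_{\mathcal M})^{-1}(\mathcal P(\mathcal M)))$. Second, since $\mathfrak{stab}_{\mathfrak{lie}(e_0,e_1)}(\Delta^{\mathcal W})$ is a graded subalgebra by Proposition \ref{prop:main:sect:2}(a), the homogeneous component $x[2]$ again belongs to this subalgebra, and it vanishes by Lemma \ref{lemma:van:deg:2:component}; hence $x \in \mathfrak{lie}(e_0,e_1)[\check 2]$. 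Combining these two conditions via Proposition \ref{prop:37:2302} places $x$ in $\mathfrak{stab}_{\mathfrak{lie}(e_0,e_1)}(\Delta^{\mathcal M})$, completing the reverse inclusion.

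The genuine obstacle in this chain of reasoning, namely the inclusion $\mathfrak{stab}_{\mathcal V_0}(\Delta^{\mathcal W}) \subset (-\cdot 1_{\mathcal M})^{-1}(\mathcal P(\mathcal M))$, has already been overcome in Proposition \ref{thm:aux}; its proof assembled the decomposition $-\cdot\Delta^{\mathcal W} = \mathbf i \circ \mathbf h \circ \mathbf H$ from \S\ref{sect:4:2501} with the discrete-topology computation of $\mathrm{Ker}(\mathbf h)$ and the commutative square relating $\mathbf H$ to $\Delta^{\mathcal M}$ from \S\ref{sect:5:2501}. Relative to this, the proof of Theorem \ref{thm:main} itself is a formal assembly of earlier statements, and the only minor technical point is the degree-$2$ vanishing provided by Lemma \ref{lemma:van:deg:2:component}, which excludes the single exceptional degree in Proposition \ref{prop:37:2302}.
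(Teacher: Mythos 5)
Your proposal is correct and follows essentially the same route as the paper's proof: reduction to the graded Lie subalgebras of $\mathfrak{lie}(e_0,e_1)$ via Propositions \ref{prop:main:sect:2} and \ref{lem:015:1711}, the forward inclusion from the earlier work, and the reverse inclusion by combining Lemma \ref{lem:16393110}(b), Proposition \ref{thm:aux}, Lemma \ref{lemma:van:deg:2:component} and Proposition \ref{prop:37:2302}. The only difference is organizational: you verify the two conditions of Proposition \ref{prop:37:2302} elementwise, whereas the paper argues degree by degree after intersecting with $\mathfrak{lie}(e_0,e_1)[\check 2]$; the content is identical.
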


\begin{proof}
The inclusion $\mathfrak{stab}(\hat\Delta^{\mathcal{M}})\subset\mathfrak{stab}(\hat\Delta^{\mathcal{W}})$ follows from 
\cite{EF2}, Corollary 3.14 (c). By Propositions \ref{lem:08:1711} and \ref{lem:015:1711}, it implies the inclusion 
\begin{equation}\label{incl:LAs:1711}
\mathfrak{stab}_{\mathfrak{lie}(e_0,e_1)}(\Delta^{\mathcal{M}})\subset
\mathfrak{stab}_{\mathfrak{lie}(e_0,e_1)}(\Delta^{\mathcal{W}})    
\end{equation}
of graded Lie subalgebras of $\mathfrak{lie}(e_0,e_1)$. 

On the other hand, one has 
\begin{align*}
& \mathfrak{stab}_{\mathfrak{lie}(e_0,e_1)}(\Delta^{\mathcal{W}})\cap 
\mathfrak{lie}(e_0,e_1)[\check 2]
=\theta^{-1}(\mathfrak{stab}_{\mathcal V_0}(\Delta^{\mathcal{W}}))
\cap \mathfrak{lie}(e_0,e_1)[\check 2]
\\ & \subset\theta^{-1}(\mathbb Q1\oplus\mathcal P(\mathcal M))
\cap \mathfrak{lie}(e_0,e_1)[\check 2]
=\mathfrak{stab}_{\mathfrak{lie}(e_0,e_1)}(\Delta^{\mathcal{M}}). 
\end{align*}
where the first equality follows from Lemma \ref{lem:16393110} (c), the inclusion follows from 
Proposition \ref{thm:aux}, and the second equality follows from \eqref{+1459}. Together with \eqref{incl:LAs:1711}, 
this implies 
\begin{equation}\label{eq:neq:2:1711}
    \forall d\neq 2,\quad \mathfrak{stab}_{\mathfrak{lie}(e_0,e_1)}(\Delta^{\mathcal{W}})[d]
=\mathfrak{stab}_{\mathfrak{lie}(e_0,e_1)}(\Delta^{\mathcal{M}})[d]. 
\end{equation}
Then $\mathfrak{stab}_{\mathfrak{lie}(e_0,e_1)}(\Delta^{\mathcal{W}})[2]=0=\mathfrak{stab}_{\mathfrak{lie}(e_0,e_1)}(\Delta^{\mathcal{M}})[2]$, 
where the first equality follows from Lemma \ref{lemma:van:deg:2:component} and the second equality follows from \eqref{+1459}. Together with 
\eqref{eq:neq:2:1711}, this implies the equality of the graded components of the Lie algebras 
$\mathfrak{stab}_{\mathfrak{lie}(e_0,e_1)}(\Delta^{\mathcal{W}})$ and $\mathfrak{stab}_{\mathfrak{lie}(e_0,e_1)}(\Delta^{\mathcal{M}})$ 
in any degree, hence the equality of these Lie algebras. The announced equality then follows from 
Propositions \ref{lem:08:1711} and \ref{lem:015:1711}. 
\end{proof}

\subsection{Proof of the equality of the stabilizer bitorsors $\mathsf{Stab}(\hat\Delta^{\mathcal M,\DR/\B})(\mathbf k)$ and 
$\mathsf{Stab}(\hat\Delta^{\mathcal W,\DR/\B})(\mathbf k)$}\label{subsection:62:2501}

\subsubsection{Review of known results}

In \cite{EF2}, Definitions 2.16 and 2.20 and \cite{EF3}, Definitions 3.5 and 3.8, we introduced $\mathbb Q$-group schemes 
$\mathsf{Stab}(\hat\Delta^{?,??})$, where $?$ (resp. $??$) is one of the symbols $\mathcal M,\mathcal W$ (resp. DR,B); these are extensions
of $\mathbb G_m$ by prounipotent group schemes and give rise to functors $\mathbf k\mapsto\mathsf{Stab}(\hat\Delta^{?,??})(\mathbf k)$ from the 
category of $\mathbb Q$-algebras to that of groups; the Lie algebras of $\mathsf{Stab}(\hat\Delta^{?,\DR})$ are 
$\mathfrak{stab}(\hat\Delta^{?,\DR})$ (see \cite{EF2}, \S3.5), and the Lie algebras $\mathfrak{stab}(\hat\Delta^{?,\B})$ of 
$\mathsf{Stab}(\hat\Delta^{?,\B})$ were defined in \cite{EF3}, \S3.8.  

In \cite{EF2}, Definitions 2.17 and 2.21, we introduced the $\mathbb Q$-schemes $\mathsf{Stab}(\hat\Delta^{?,\DR/\B})$, where $?$ is one 
of the symbols $\mathcal M,\mathcal W$, giving rise to functors $\mathbf k\mapsto\mathsf{Stab}(\hat\Delta^{?,\DR/\B})(\mathbf k)$
from the category of $\mathbb Q$-algebras to that of sets. For any $\mathbf k$, the set $\mathsf{Stab}(\hat\Delta^{?,\DR/\B})(\mathbf k)$
is equipped with commuting left and right actions of the groups $\mathsf{Stab}(\hat\Delta^{?,\DR})(\mathbf k)$ and 
$\mathsf{Stab}(\hat\Delta^{?,\B})(\mathbf k)$. By \cite{EF2}, Theorem 3.1 (b) and (c) and \cite{EF3}, Lemmas 3.6 and 3.9, each of 
these actions is free and transitive, so that $\mathsf{Stab}(\hat\Delta^{?,\DR/\B})(\mathbf k)$ is a {\it bitorsor} (see 
\cite{EF3}, Definition 1.1 (a), and \cite{Gi}, Chap. III, Definition 1.5.3).  

In \cite{EF2}, Theorem 3.1 (a) and \cite{EF3}, Theorem 3.14 (b) we constructed for any $\mathbf k$, group inclusions 
$\mathsf{Stab}(\hat\Delta^{\mathcal M,\DR})(\mathbf k)\subset\mathsf{Stab}(\hat\Delta^{\mathcal W,\DR})(\mathbf k)$ and 
$\mathsf{Stab}(\hat\Delta^{\mathcal M,\B})(\mathbf k)\subset
\mathsf{Stab}(\hat\Delta^{\mathcal W,\B})(\mathbf k)$ and set inclusions $\mathsf{Stab}(\hat\Delta^{\mathcal M,\DR/\B})
(\mathbf k)\subset\mathsf{Stab}(\hat\Delta^{\mathcal W,\DR/\B})(\mathbf k)$, which are compatible with the actions.  

\subsubsection{Equality results}

\begin{thm}\label{thm:24122021}
Let $\mathbf k$ be a $\mathbb Q$-algebra. Then the inclusion of the bitorsors attached to 
$\mathsf{Stab}(\hat\Delta^{\mathcal M,\DR/\B})(\mathbf k)$ and $\mathsf{Stab}(\hat\Delta^{\mathcal W,\DR/\B})(\mathbf k)$ is an equality, i.e.: 

(a) the group inclusion $\mathsf{Stab}(\hat\Delta^{\mathcal M,\DR})(\mathbf k)\subset\mathsf{Stab}(\hat\Delta^{\mathcal W,\DR})(\mathbf k)$ is an 
equality; 

(b) the inclusion $\mathsf{Stab}(\hat\Delta^{\mathcal M,\DR/\B})(\mathbf k)\subset\mathsf{Stab}(\hat\Delta^{\mathcal W,\DR/\B})(\mathbf k)$ is an 
equality; 

(c) the group inclusion $\mathsf{Stab}(\hat\Delta^{\mathcal M,\B})(\mathbf k)\subset\mathsf{Stab}(\hat\Delta^{\mathcal W,\B})(\mathbf k)$ is an 
equality. 
\end{thm}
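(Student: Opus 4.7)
The plan is to derive all three statements from Theorem~\ref{thm:main} by propagating the equality of the DR Lie algebras successively to the DR group schemes, the bitorsors, and the B group schemes. Since the hard content is already contained in Theorem~\ref{thm:main}, the passages to (a), (b), and (c) will be formal, using the Lie theory of prounipotent group schemes and the general theory of torsors.

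First I would prove (a). By Propositions~\ref{prop:main:sect:2}(c) and~\ref{lem:015:1711}(b), each of $\mathsf{Stab}(\hat\Delta^{\mathcal W,\DR})$ and $\mathsf{Stab}(\hat\Delta^{\mathcal M,\DR})$ is an extension of $\mathbb G_m$ by the prounipotent $\mathbb Q$-group scheme associated with the pronilpotent Lie algebra $\mathfrak{stab}_{\mathfrak{lie}(e_0,e_1)}(\Delta^?)^\wedge$ (for $?=\mathcal W,\mathcal M$ respectively), with $\mathbb G_m$ acting by the grading. Theorem~\ref{thm:main} asserts that these two Lie subalgebras of $\mathfrak g^\DR$ coincide, so the exponential equivalence between pronilpotent Lie algebras and prounipotent group schemes yields the equality of the prounipotent pieces, and the $\mathbb G_m$-extensions agree because the grading on the prounipotent group is determined by that of its Lie algebra. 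Hence the two DR group schemes are equal; evaluating on $\mathbf k$ yields (a).

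Next I would derive (b) from (a). By construction the inclusion $\mathsf{Stab}(\hat\Delta^{\mathcal M,\DR/\B})\hookrightarrow\mathsf{Stab}(\hat\Delta^{\mathcal W,\DR/\B})$ is a morphism of affine $\mathbb Q$-schemes equivariant under the left actions of $\mathsf{Stab}(\hat\Delta^{?,\DR})$, and by (a) these two acting groups coincide, so the inclusion is a morphism of torsors under a single common group $G$. Since any morphism of $G$-torsors is automatically an isomorphism (fpqc-locally both sides become $G$ and the map becomes right translation by some element), this inclusion is a scheme-theoretic equality, which gives (b) on $\mathbf k$-points. Finally I would derive (c) from (b): the common bitorsor $T$ of (b) carries two free transitive right actions by $\mathsf{Stab}(\hat\Delta^{\mathcal M,\B})$ and by $\mathsf{Stab}(\hat\Delta^{\mathcal W,\B})$, compatible with the group inclusion. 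Fixing a section $t_0$ of $T$ fpqc-locally, the orbit maps $h\mapsto t_0\cdot h$ identify each of the two B-group schemes with $T$, so under these identifications the inclusion between them becomes an isomorphism, giving (c).

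Once Theorem~\ref{thm:main} is in hand, the main obstacle is organizational rather than technical: one must align the semidirect product description of each $\mathfrak{stab}(\hat\Delta^{?,\DR})$ with the $\mathbb G_m$-extension structure of the corresponding group scheme so that the Lie-algebra equality upgrades to a scheme-theoretic group equality, and then verify that the two torsor arguments for (b) and (c) are compatible with this identification.
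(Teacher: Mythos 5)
Your proposal is correct and follows essentially the same route as the paper: (a) by transporting the Lie algebra equality of Theorem \ref{thm:main} through the exponential correspondence after splitting off the $\mathbb G_m$-factor, (b) by the fact that a subtorsor under the same group is the whole torsor, and (c) by the uniqueness of the right structure group of a bitorsor. The only point where the paper is more careful is in (a): the identification of the prounipotent part of $\mathsf{Stab}(\hat\Delta^{?,\DR})(\mathbf k)$ with the exponential of $\mathfrak{stab}_0(\hat\Delta^{?,\DR})\hat\otimes\mathbf k$ does not follow from Propositions \ref{prop:main:sect:2} and \ref{lem:015:1711} (which concern only the Lie algebras) but is a separate input, cited there as \cite{EF2}, Lemma 3.14 (based on \cite{EF0}, Lemma 5.1).
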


\begin{proof} 
(a) The sets $\mathsf{Stab}(\hat\Delta^{?,\DR})(\mathbf k)$ for $?\in\{\mathcal M,\mathcal W\}$ are 
subgroups of the group $\mathsf G^\DR(\mathbf k)$, which is $\mathbf k^\times\times\mathcal G(\hat{\mathcal V})$ 
equipped with the structure of semidirect product of the group $(\mathcal G(\hat{\mathcal V}),\circledast)$
by the action of $\mathbf k^\times$ (see \cite{EF2}, 
Definitions 2.16 and 2.20 and §1.6.3, where $\hat{\mathcal V}$ is denoted $\hat{\mathcal V}^\DR$). By \cite{EF2}, Lemmas 2.18 and 2.22, 
$\mathsf{Stab}(\hat\Delta^{?,\DR})(\mathbf k)$ contain $\mathbf k^\times$, which implies the equalities 
$\mathsf{Stab}(\hat\Delta^{?,\DR})(\mathbf k)=\mathbf k^\times\times\mathsf{Stab}_1(\hat\Delta^{?,\DR})(\mathbf k)$ 
of subsets of $\mathbf k^\times\times\mathcal G(\hat{\mathcal V})$, where 
$\mathsf{Stab}_1(\hat\Delta^{?,\DR})(\mathbf k)$ are the images of the intersections of 
$\mathsf{Stab}(\hat\Delta^{?,\DR})(\mathbf k)$ with $\{1\}\times\mathcal G(\hat{\mathcal V})$ by the canonical isomorphism 
$\{1\}\times\mathcal G(\hat{\mathcal V})\to \mathcal G(\hat{\mathcal V})$ (see \cite{EF2}, Remark 2.23). 
The statement is therefore equivalent to the equality of the subgroups $\mathsf{Stab}_1(\hat\Delta^{\mathcal M,\DR})(\mathbf k)$
and $\mathsf{Stab}_1(\hat\Delta^{\mathcal W,\DR})(\mathbf k)$ of $(\mathcal G(\hat{\mathcal V}),\circledast)$. 

Equip the completion $\mathfrak{lie}(e_0,e_1)^\wedge\hat\otimes\mathbf k$ with the product $\mathrm{cbh}_{\langle,\rangle}(\cdot,\cdot)$ taking 
$(x,y)$ to the image by the Lie algebra morphism from the topologically free Lie algebra with generators $a,b$ 
to $(\mathfrak{lie}(e_0,e_1)^\wedge\hat\otimes\mathbf k,\langle,\rangle)$ given by $a\mapsto x$, $b\mapsto y$ 
of the element $\mathrm{log}(e^ae^b)$. Then $(\mathfrak{lie}(e_0,e_1)^\wedge\hat\otimes\mathbf k,\mathrm{cbh}_{\langle,\rangle}(\cdot,\cdot))$ 
is a group. The map 
$\mathrm{exp}_\circledast : \mathfrak{lie}(e_0,e_1)^\wedge\hat\otimes\mathbf k\to\mathcal G(\hat{\mathcal V})$ defined in \cite{R}, 
(3.1.10.1) sets up a group isomorphism $(\mathfrak{lie}(e_0,e_1)^\wedge\hat\otimes\mathbf k,\mathrm{cbh}_{\langle,\rangle}(\cdot,\cdot))
\simeq(\mathcal G(\hat{\mathcal V}),\circledast)$ (\cite{R}, Corollary 3.1.10, see also \cite{EF2}, \S3.5.2). It follows from 
\cite{EF2}, Lemma 3.14 (based on \cite{EF0}, Lemma 5.1) that $\mathrm{exp}_\circledast$ restricts to bijections 
$\mathfrak{stab}_0(\hat\Delta^{\mathcal M,\DR})\hat\otimes \mathbf k\to\mathsf{Stab}_1(\hat\Delta^{\mathcal M,\DR})(\mathbf k)$
and $\mathfrak{stab}_0(\hat\Delta^{\mathcal W,\DR})\hat\otimes \mathbf k\to\mathsf{Stab}_1(\hat\Delta^{\mathcal W,\DR})(\mathbf k)$, the
index 0 meaning the intersection of a Lie subalgebra of $\mathfrak g^\DR$ with $\mathfrak{lie}(e_0,e_1)^\wedge\hat\otimes\mathbf k$.
As Theorem \ref{thm:main} implies the equality $\mathfrak{stab}_0(\hat\Delta^{\mathcal M,\DR})\hat\otimes 
\mathbf k=\mathfrak{stab}_0(\hat\Delta^{\mathcal W,\DR})\hat\otimes \mathbf k$, one obtains the equality 
$\mathsf{Stab}_1(\hat\Delta^{\mathcal M,\DR})(\mathbf k)=\mathsf{Stab}_1(\hat\Delta^{\mathcal W,\DR})(\mathbf k)$.  

(b) follows from the fact that $\mathsf{Stab}(\hat\Delta^{\mathcal M,\DR/\B})(\mathbf k)$ and $\mathsf{Stab}(\hat\Delta^{\mathcal W,\DR/\B})
(\mathbf k)$ are subtorsors of $\mathsf{G}^{\DR,\B}(\mathbf k)$ with respective groups $\mathsf{Stab}(\hat\Delta^{\mathcal M,\DR})(\mathbf k)$ 
and $\mathsf{Stab}(\hat\Delta^{\mathcal W,\DR})(\mathbf k)$ (see \cite{EF2}, Theorem 3.1, (b) and (c)), from the torsor inclusion $\mathsf{Stab}
(\hat\Delta^{\mathcal M,\DR/\B})(\mathbf k)\subset\mathsf{Stab}(\hat\Delta^{\mathcal W,\DR/\B})(\mathbf k)$ 
(see \cite{EF2}, Theorem 3.1, (c)), from (a) and from \cite{EF2}, Lemma 2.7, (b). 

(c): (a) and (b) imply that the subtorsors $\mathsf{Stab}(\hat\Delta^{\mathcal M,\DR/\B})(\mathbf k)$ and 
$\mathsf{Stab}(\hat\Delta^{\mathcal W,\DR/\B})(\mathbf k)$ of $\mathsf{G}^{\DR,\B}(\mathbf k)$ are equal. If $_GX_H$ is 
a bitorsor and if $_{G'}X'_{H'}$ and $_{G''}X''_{H''}$ are subbitorsors such that $_{G'}X'$ and $_{G''}X''$ are equal, then 
the subgroups $H'$ and $H''$ of $H$ are equal; indeed, \cite{EF3}, Lemma 1.12 implies that $H'\subset H''$, which by symmetry 
implies the equality. Applying this to $_{G'}X'_{H'}$ and $_{G''}X''_{H''}$ being the subbitorsors of $\mathsf{G}^{\DR,\B}(\mathbf k)$ 
corresponding to $\mathsf{Stab}(\hat\Delta^{\mathcal M,\DR/\B})(\mathbf k)$ and $\mathsf{Stab}(\hat\Delta^{\mathcal W,\DR/\B})(\mathbf k)$ 
(see \cite{EF3}, Lemmas 3.6 and 3.9) using the equality of the underlying torsors yields the claimed equality. 
\end{proof}

\begin{cor}
The Lie algebra inclusion $\mathfrak{stab}(\hat\Delta^{\mathcal M,\B})\subset\mathfrak{stab}(\hat\Delta^{\mathcal W,\B})$ is an equality. 
\end{cor}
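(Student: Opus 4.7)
The plan is to deduce this immediately from Theorem \ref{thm:24122021}(c). As recalled in \S\ref{subsection:62:2501}, the object $\mathfrak{stab}(\hat\Delta^{?,\B})$ is by definition (from \cite{EF3}, \S3.8) the Lie algebra of the affine $\mathbb Q$-group scheme $\mathsf{Stab}(\hat\Delta^{?,\B})$ for $?\in\{\mathcal M,\mathcal W\}$, and the inclusion of Lie algebras in the statement is the image under the Lie algebra functor of the group scheme inclusion $\mathsf{Stab}(\hat\Delta^{\mathcal M,\B})\subset\mathsf{Stab}(\hat\Delta^{\mathcal W,\B})$. Since Theorem \ref{thm:24122021}(c) asserts this latter inclusion to be an equality at the level of $\mathbf k$-points for every $\mathbb Q$-algebra $\mathbf k$, the two group schemes coincide as functors $\mathbb Q\text{-alg}\to\text{Groups}$, and hence their Lie algebras coincide.

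Concretely, I would spell this out by recovering the Lie algebra from the functor via the standard identification $\mathrm{Lie}(\mathsf{G})=\ker\bigl(\mathsf{G}(\mathbb Q[\epsilon]/(\epsilon^2))\to \mathsf{G}(\mathbb Q)\bigr)$ induced by $\epsilon\mapsto 0$, applied to $\mathsf G=\mathsf{Stab}(\hat\Delta^{?,\B})$. Specializing Theorem \ref{thm:24122021}(c) to $\mathbf k=\mathbb Q[\epsilon]/(\epsilon^2)$ yields the equality $\mathsf{Stab}(\hat\Delta^{\mathcal M,\B})(\mathbf k)=\mathsf{Stab}(\hat\Delta^{\mathcal W,\B})(\mathbf k)$ as subgroups of $\mathsf{G}^{\B}(\mathbf k)$, and taking kernels of the specialization maps on either side gives the desired equality of Lie subalgebras of $\mathfrak g^{\B}$.

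There is no genuine obstacle here: all the substantive content has been absorbed into Theorem \ref{thm:24122021}, which in turn rests on the Lie-algebraic Theorem \ref{thm:main}. The present corollary is merely the Betti-side shadow of the de Rham equality $\mathfrak{stab}(\hat\Delta^{\mathcal M,\DR})=\mathfrak{stab}(\hat\Delta^{\mathcal W,\DR})$, transported through the Lie functor applied to the equality of group schemes established at the bitorsor level.
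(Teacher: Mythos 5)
Your proposal is correct and follows essentially the same route as the paper: the paper likewise deduces from Theorem \ref{thm:24122021}(c) that the two group schemes $\mathsf{Stab}(\hat\Delta^{\mathcal M,\B})$ and $\mathsf{Stab}(\hat\Delta^{\mathcal W,\B})$ coincide (noting only that they are extensions of $\mathbb G_m$ by prounipotent group schemes, so that passing to Lie algebras is well-behaved) and then concludes equality of the Lie algebras. Your explicit dual-numbers identification of the Lie algebra is just a spelled-out version of the same step.
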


\begin{proof} 
The group schemes $\mathsf{Stab}(\hat\Delta^{\mathcal M,\B})$ and $\mathsf{Stab}(\hat\Delta^{\mathcal W,\B})$
are extensions of $\mathbb G_m$ by prounipotent group schemes. Theorem \ref{thm:24122021}, (c), then implies the equality 
of these group schemes, and therefore of their Lie algebras, which implies the statement. 
\end{proof}


\begin{thebibliography}{AET}

\bibitem[Dr]{Dr} V. Drinfeld, {\it On quasitriangular quasi-Hopf algebras and on a group that is closely connected with 
$\mathrm{Gal}(\overline{\mathbb Q}/\mathbb Q)$}, Leningrad Math. J. 2 (1991), no. 4, 829--860.  

\bibitem[DeT]{DT} P. Deligne, T. Terasoma, {\it Harmonic shuffle relation for associators,} preprint (2005)

\bibitem[Ec]{Ec} J. Ecalle, {\it ARI/GARI. la dimorphie et l'arithm\'etique des multizetas: un premier bilan.}
Journal de Th\'eorie des Nombres de Bordeaux, 15 (2003), p 411--478. 

\bibitem[EF0]{EF0} B. Enriquez, H. Furusho, {\it A stabilizer interpretation of double shuffle Lie algebras.} 
Int. Math. Res. Not. IMRN 2018, no. 22, 6870--6907. 

\bibitem[EF1]{EF1} B. Enriquez, H. Furusho, {\it The Betti side of the double shuffle theory. I. The harmonic coproduct.} 
Selecta Math. (N.S.) 27 (2021), no. 5, Paper No. 79, 106 pp.

\bibitem[EF2]{EF2} B. Enriquez, H. Furusho, {\it The Betti side of the double shuffle theory. II. Double shuffle relations 
for associators.} Preprint arXiv:1807.07786, v5.

\bibitem[EF3]{EF3} B. Enriquez, H. Furusho, {\it The Betti side of the double shuffle theory. III. Bitorsor structures.} Preprint 
arXiv:1908.00444, v5. 

\bibitem[G]{Gi} J. Giraud, {\it Cohomologie non ab\'{e}lienne. } Die Grundlehren der mathematischen Wissenschaften, Band
179. Springer-Verlag, Berlin-New York, 1971.

\bibitem[IKZ]{IKZ}
K. Ihara, M. Kaneko, D. Zagier, 
{\it Derivation and double shuffle relations for multiple zeta values,}
Compos. Math. 142 (2006), no. 2, 307--338.

\bibitem[LM]{LM} T.T.Q. Le, J. Murakami, {\it Kontsevich’s integral for the Kauffman polynomial.} Nagoya Math. J. 142 (1996),
39--65.

\bibitem[R]{R} G. Racinet, 
{\it Doubles m\'elanges des polylogarithmes multiples aux racines de l'unit\'e}, 
Publ. Math. Inst. Hautes \'Etudes Sci. {\bf 95} (2002), 185--231. 

\bibitem[Z]{Zagier} D. Zagier, {\it Values of zeta functions and their applications.} First European Congress of Mathematics, 
Vol. II (Paris, 1992), 497--512, Progr. Math., 120, Birkh\"auser, Basel, 1994.

\end{thebibliography}
\end{document}